\newcommand{\tMpt}{\widetilde{\partial M}_{\tau}}
\newcommand{\I}{{\mathbf I}}
\newcommand{\szego}{Szeg\"o }
\newcommand{\kahler}{K\"ahler }
\newcommand{\N}{{\mathbb N}}
\newcommand{\R}{{\mathbb R}}
\newcommand{\C}{{\mathbb C}}
\newcommand{\Z}{{\mathbb Z}}
\newcommand{\dbar}{\bar\partial}
\newcommand{\ddbar}{\partial\dbar}
\newcommand{\E}{{\mathbf E}}
\renewcommand{\phi}{\varphi}
\newcommand{\fcal}{\mathcal{F}}
\newcommand{\gcal}{\mathcal{G}}
\newcommand{\hcal}{\mathcal{H}}
\newcommand{\jcal}{\mathcal{J}}
\newcommand{\lcal}{\mathcal{L}}
\newcommand{\ncal}{\mathcal{N}}
\newcommand{\ocal}{\mathcal{O}}
\newcommand{\pcal}{\mathcal{P}}
\newcommand{\scal}{\mathcal{S}}
\newcommand{\half}{{\frac{1}{2}}}
\newcommand{\diag}{{\operatorname{diag}}}
\renewcommand{\phi}{\varphi}
\newtheorem{maintheo}{{\sc Theorem}}
\newtheorem{mainprop}{{\sc Proposition}}
\newtheorem{mainlem}{{\sc Lemma}}
\newtheorem{maindefin}{{\sc Definition}}
\newtheorem{theo}{{\sc Theorem}}[section]
\newtheorem{cor}[theo]{{\sc Corollary}}
\newtheorem{defin}[theo]{{\sc Definition}}
\newtheorem{rem}[theo]{{\sc Remark}}
\newtheorem{lem}[theo]{{\sc Lemma}}
\newtheorem{prop}[theo]{{\sc Proposition}}
\title[Ergodicity and intersections of nodal sets and geodesics   ]{Ergodicity and intersections of nodal sets and geodesics on real analytic surfaces}
\author{Steve Zelditch }
\address{Department of Mathematics, Northwestern  University, Evanston, IL 60208, USA}
\email{zelditch@math.northwestern.edu}
\thanks{Research partially supported by NSF grants  \# DMS-0904252  and DMS-1206527.}
\begin{document}

\maketitle

\begin{abstract}   We consider the the intersections of the complex nodal set $\ncal_{\lambda_{j}}^{\C}$  of the analytic continuation of
an eigenfunction of $\Delta$ on a real analytic surface $(M^2, g)$ with the complexification of a geodesic $\gamma$.
We prove that if the geodesic flow is ergodic and if $\gamma$ is periodic and satisfies a generic asymmetry condition, then the 
   intersection points  $\ncal_{\lambda_{j}}^{\C} \cap \gamma_{x, \xi}^{\C}$   condense along the real  geodesic and become uniformly
distributed with respect to its  arc-length.  We prove an analogous result for non-periodic geodesics except that 
the `origin' $\gamma_{x, \xi}(0)$ is allowed to move with $\lambda_j$.

\end{abstract}

This article is concerned with the `complex geometry' of nodal
  sets of Laplace eigenfunctions on real analytic
Riemannian surfaces   $(M^2, g)$   with ergodic geodesic flow.  All but one of the methods 
 are valid in all dimensions, so until it is  necessary to specialize
to surfaces  we consider Riemannian
manifolds $(M^m, g)$ of general dimension $m$.  Let
$\{\phi_{j}\}$ be an orthonormal basis of eigenfunctions of the
Laplacian $\Delta$ of $(M, g)$,
$$\Delta \phi_{_j} = \lambda_j^2 \phi_{_j}, \;\;\; \langle \phi_j, \phi_k \rangle = \delta_{jk}, $$
where $\lambda_0 = 0 < \lambda_1 \leq \lambda_2 \leq \cdots$ and
where  $\langle u, v \rangle = \int_M u v dV_g$ ($dV_g$ being the
volume form). When the geodesic flow $G^t: S^*_g M \to S^*_g M$ is
ergodic on the unit (co-)tangent bundle, the `random
wave model' for eigenfunctions predicts that the nodal sets
$$\ncal_{\phi_{j}} = \{x\in M:
\phi_{j}(x) = 0\}$$ become equidistributed with respect to the
volume form in the sense that
\begin{equation} \label{PHYS} \frac{1}{\hcal^{m-1}(\ncal_{\phi_j})} \int_{\ncal_{\phi_j} }
f d \hcal^{m-1} \to \frac{1}{Vol(M, g)} \int_M f dV_g, \;\;\;
(\forall f \in C(M)). \end{equation} Here, 
$\hcal^{m-1}(\ncal_{\phi_j})$ denotes the hypersurface volume.  This prediction appears to lie far beyond the scope of
current mathematical techniques. But we aim to show that something
quite close to \eqref{PHYS} can be proved for intersections
of   complex nodal lines and complexified geodesics on real
analytic surfaces with ergodic geodesic flow. Roughly speaking, we show that in  the complex domain,  as 
$\lambda \to \infty$, the intersections of   nodal sets with generic periodic and non-periodic geodesics 
condense along the underlying real geodesic and become uniformly distributed relative to
its arclength.   Much of the proof  generalizes  to any real analytic curve on a surface or to
a real analytic hypersurface in higher dimensions, but the case of geodesics  seems
to us special and interesting enough to deserve a separate treatment; the potential generalizations
are discussed at the end of the introduction.

To state our results, we  introduce some notation.  We   recall that any real analytic
manifold $M$ admits a Bruhat-Whitney complexification $M_{\C}$,
and that for any real analytic metric $g$ all of the
eigenfunctions $\phi_j$  extend holomorphically to  a fixed open
open neighborhood $M_{\epsilon}$  of $M$ in $M_{\C}$ called a
Grauert tube. The `radius' of an open neighborhood is  measured
by the Grauert tube function $\sqrt{\rho}$ and we denote the level set $\sqrt{\rho} = \tau$ by
$\partial M_{\tau}$ for $\tau \leq \epsilon$ (see \S
\ref{BACKGROUND} for background). The  complex nodal sets are
defined by
$$\ncal_{\phi_{_j}^{\C}} = \{\zeta \in M_{\epsilon}:
\phi_{j}^{\C}(\zeta) = 0\}. $$ We plan to intersect these nodal
sets with the (image of the) complexification of an arc-length parameterized  geodesic
\begin{equation} \label{GAMMAX} \gamma_{x, \xi}: \R \to M, \;\;\;\gamma_{x, \xi}(0) = x, \;\;
\gamma_{x, \xi}'(0) = \xi \in T_x M.  \end{equation} 
 If  \begin{equation} \label{SEP} S_{\epsilon} = \{(t
+ i \tau \in \C: |\tau| \leq \epsilon\} \end{equation} then (as recalled in \S
\ref{BACKGROUND}) $\gamma_{x, \xi}$ admits an  analytic
continuation
\begin{equation} \label{gammaXCX} \gamma_{x, \xi}^{\C}: S_{\epsilon} \to M_{\epsilon}.  \end{equation}
 When we freeze $\tau$ we put
\begin{equation} \label{gammatau} \gamma_{x, \xi}^{\tau} (t) = \gamma^{\C}_{x, \xi}(t + i \tau). \end{equation}

The intersection points of $\gamma_{x, \xi}^{\C}$ and
$\ncal_{\phi_j}^{\C}$ correspond to the zeros of the pullback
$(\gamma_{x, \xi}^{\C})^* \phi_j^{\C}$. We encode this discrete
set by the measure
\begin{equation} \label{NCALCURRENT} [\ncal^{\gamma_{x, \xi}^{\C}}_{\lambda_j}] = \sum_{(t + i \tau):\; \phi_j^{\C}(\gamma_{x, \xi}^{\C}(t + i \tau)) = 0} \delta_{t + i \tau}.
\end{equation}

\begin{maindefin} \label{EQUIDEF}  Let $\scal = \{j_k\} \subset \N $ be a subsequence of the positive integers. We say that the intersection points of the complex nodal sets $\ncal_{\lambda_{j_k}}^{\C}$ 
and the complexified geodesic $\gamma_{x, \xi}^{\C}$  for the subsequence $\scal$ condense on the real geodesic and become uniformly
distributed with respect to arc-length if,
for any $f \in C_c(S_{\epsilon})$,
$$\lim_{k \to \infty}  \frac{1}{\lambda_{j_k}} \sum_{(t + i \tau):\; \phi_{j_k}^{\C}(\gamma_{x, \xi}^{\C}(t + i \tau)) = 0} f(t + i
\tau ) = \frac{1}{\pi} \int_{\R} f(t) dt.  $$
That is, $\frac{1}{\lambda_{j_k}} [\ncal^{\gamma_{x, \xi}^{\C}}_{\lambda_j}]  \to \frac{1}{\pi} \delta_0(\tau) dt  d \tau $ in the sense of
measures.
\end{maindefin}


The first result of this article  (Theorem \ref{MAINCOR}) gives
a sufficient condition on a periodic  geodesic $\gamma_{x, \xi}$ 
 for the existence of a subsequence $\scal_{x, \xi}$ of density one of the $\{\lambda_{j}\}$  for which 
$\{\ncal_{\lambda_{j_k}}\}$ satisfies the condition of Definition \ref{EQUIDEF}.
The condition is that the QER (quantum ergodic restriction) result of \cite{TZ} is valid for the geodesic. As recalled
in \S \ref{QER}, the  QER result states that restrictions of eigenfuntions $\phi_{j_k} |_{\gamma_{x, \xi}}$ to a (real) geodesic
are quantum ergodic along $\gamma_{x, \xi}$ as long as
 $\gamma_{x, \xi}$ is {\it asymmetric} (as a hypersurface of $M$) with respect to the
geodesic flow. The asymmetry  condition (Definition \ref{ANC})  means that geodesics $\alpha(t): \R \to M$ with $\alpha(0) \in \gamma_{x, \xi}$ 
(i.e. the trace of $\gamma_{x, \xi}$) and the geodesic $\alpha^*(t)$ such that $\alpha(0) = \alpha^*(0)$ and with
$\alpha^{* '}(0) $ equal to the reflection through $T_{\alpha(0)} \gamma_{x, \xi}$ of $\alpha'(0)$ almost never return
to $\gamma_{x, \xi}$ at the same time and the same point. Since geodesics are  hypersurfaces only
when $\dim M = 2$, the result on intersections of periodic geodesics and nodal sets  is only proved in that dimension.
Results independent of QER hold in all dimensions.

The second result ( Theorem \ref{MAINCORnonper}) is an analogous result for  non-periodic geodesics, such  as Birkhoff regular ones. The result is somewhat weaker due  to the non-compactness of non-periodic geodesics and the resulting
problems with  escape of mass at parameter time infinity.


\subsection{A key Lemma} Before stating the Theorems precisely, we state a key Lemma which is valid
in all dimensions and  which reduces the equidistribution of zeros in the ergodic case to growth estimates. 
 It is ultimately based on the key Proposition \ref{LL} which we state
after some further preliminaries. 

The starting point is 
 the Poincar\'e-Lelong formula, according to which we may express the
current of summation over the intersection points in \eqref{NCALCURRENT} in the form,
\begin{equation}\label{PLL}  [\ncal^{x, \xi}_{\lambda_j}] = \frac{ i}{\pi} \ddbar_{t + i
\tau} \log \left| \gamma_{x, \xi}^* \phi_{\lambda_j}^{\C} (t + i \tau)
\right|^2. \end{equation}   This formula holds for the complexification of any real analytic curve. It follows from
\eqref{PLL} that the main step in the proofs of all the theorems is to obtain the asymptotics of  the sequence
\begin{equation} \label{vxxi} v_j^{x, \xi} : = \frac{1}{\lambda_{j} } \log \left| \gamma_{x, \xi}^* \phi_{\lambda_j}^{\C} (t + i \tau)
\right|^2, \;\;  \; ((x, \xi)  \in S^*M) \end{equation}
of subharmonic functions on a strip $S_{\epsilon} \subset \C$.

A key Lemma is the following compactness result, which 
combines   a standard compactness Lemma due to Hartogs, H. Cartan
and L. H\"ormander with a stronger conclusion that is ultimately based on Proposition \ref{LL} below. We use
the notation $v^*$ for the USC (upper semi-continuous) regularization of $v$. For background
we refer to \cite{Ho2} (Theorems 3.2.12-3.2.13).

\begin{mainlem} \label{HARTOGSIntro} For any compact analytic Riemannian manifold $(M, g)$,  and any $(x, \xi) \in S^* M$,
the family of subharmonic functions 
$$\fcal^{x, \xi}: = \{ v_j^{x, \xi} (t + i \tau), \; j = 1,  2, \dots \} $$ on the strip $S_{\epsilon}$   is precompact in 
$L^1_{loc}(S_{\epsilon})$ as long as 
it does not converge  uniformly to $-
\infty$ on all compact subsets of $S_{\epsilon}$. Moreover:

\begin{itemize}

\item For all $(x, \xi)$,
$\limsup_{k \to \infty} v_{k}^{x, \xi}(t + i \tau)
 \leq 2 |\tau| $.

\item Let  $\{v_{j_k}^{x, \xi}\}$ be any subsequence of  $\{v_j^{x, \xi}\}$ with a unique $L^1_{loc}$ limit
$v$ on $S_{\epsilon}$ and let $v^*$ be its USC regularization.
 Then if  $v^* <  2 |\tau| - \epsilon$ on an open set $U \subset S_{\epsilon}$  then  $v^*  \leq 2 |\tau| - \epsilon$
for $\tilde{U} = \bigcup_{t \in \R} (U + t)$  and 
\begin{equation} \label{YBAD} \limsup_{k \to \infty} v_{j_k} \leq |\tau| - \epsilon \;\;\; \mbox{on}\;\; \tilde{U}. \end{equation}

\end{itemize}

\end{mainlem}

The upper bounds follow from the global upper bound
\begin{equation} \label{UB} \limsup_{k \to \infty} \frac{1}{\lambda_j} \log |\phi_{j_k}(\zeta)|^2 \leq  2 \sqrt{\rho}(\zeta)\end{equation}
everywhere on $\partial M_{\tau}$ proved in \cite{Z};   we review it in \S \ref{LWLSECT}. However, it is not generally true that if a limit $v^*$
is $<  2|\tau| - \epsilon$ on some open set then it is globally $< 2 |\tau| - \epsilon$ on $\R$. This is where
Proposition \ref{LL} is used.

Since there is no unique choice of origin along (the trace of) the parameterized  geodesic $\gamma_{x, \xi}$
it is natural to  consider the enlarged family
 \begin{equation} \label{FAMILYs} \fcal^{x, \xi}_{\R} := \bigcup_{s \in \R}
 \fcal^{G^s(x, \xi)}\ \end{equation}  of translates of  $\gamma_{x, \xi}^* \phi_j$ for $j = 1, 2, \dots$. The 
compactness result of Lemma \ref{HARTOGSIntro} generalizes to this family; we refer to Lemma 
\ref{HARTOGSM} for the statement we need.

\subsection{Statement of results for asymmetric periodic geodesics on surfaces}

The first result pertains to  periodic  geodesics on surfaces which satisfy the  asymmetry condition 
(Definition \ref{ANC}). 
 The asymmetry condition is needed
to rule out obvious counter-examples such as when 
$\gamma_{x, \xi}$ is the fixed point set of an isometric involution; then ``odd" eigenfunctions under the 
involution will vanish everywhere  on the geodesic. The  asymmetry condition  originated in    \cite{TZ}  as the condition
that $\gamma_{x, \xi}$ have the QER (quantum ergodic restriction) property, i.e  that
there exists a full density set of eigenfunctions $\{\phi_{j_k}\}$
which are quantum ergodic when restricted
to $\gamma_{x, \xi}$  \cite{TZ}; see also \cite{DZ}.  The asymmetry  of periodic geodesics on hyperbolic 
surfaces is studied in \cite{TZ} and the discussion is almost the same for any surface of negative curvature. 
Hence we do not discuss existence of asymmetric periodic geodesics in this article.

\begin{maintheo}\label{MAINCOR} Let $(M^2, g)$ be a real analytic Riemannian surface  with ergodic
geodesic flow. Let $\gamma_{x, \xi}$ be a periodic geodesic satisfying the assymetry QER hypothesis of Definition \ref{ANC}.   Then there
exists a subsequence of eigenvalues $\lambda_{j_k}$ of density one
such that the equi-distribution result in Definition \ref{EQUIDEF} holds. 
\end{maintheo}

The main Proposition is:

\begin{mainprop} \label{MAINPROP} (Growth saturation) If  $\gamma_{x, \xi}$ is a periodic geodesic which satisfies the  QER asymmetry condition (Definition \ref{ANC})  along compact arcs,  then there exists a subsequence $\scal_{x, \xi}$  of density
one so that, for all $\tau < \epsilon$,   $$\lim_{k \to \infty} \frac{1}{\lambda_{j_k}} \log \left| \gamma_{x, \xi}^{\tau *} \phi_{\lambda_{j_k}}^{\C} (t + i \tau)
\right|^2 = 2 |\tau|\;\;\; \mbox{in}\;\;L^1_{loc} (S_{\tau}).  $$
The subsequence $\scal_{x, \xi}$ is the ergodic sequence along $\gamma_{x, \xi}$ given by Theorem \ref{QER}.

\end{mainprop}

Proposition \ref{MAINPROP} immediately implies Theorem \ref{MAINCOR} since we can apply $\ddbar$ to the
$L^1$ convergent sequence $\frac{1}{\lambda_{j_k}} \log \left| \gamma_{x, \xi}^* \phi_{\lambda_{j_k}}^{\C} (t + i \tau)
\right|^2 $ to obtain a weakly convergence sequence of measures tending to $\ddbar |\tau|$. This Proposition has an analogue for any real analytic curve but the exact formula
is special to geodesics and arises because complex geodesics are isometric embeddings to Grauert tubes
(see \S \ref{KISOMEM}).  In general, the growth rates of restrictions depend on the curve.

\subsection{Statement of results for non-periodic geodesics}

We next  consider non-periodic geodesics. 
The non-compactness of $\R$ may allow the  $L^2$ mass of the restricted
eigenfunctions (in the real or complex domain)  to escape `to infinity' along the parameter interval $\R$ of the geodesic as $\lambda_j \to \infty$. That is, 
  $|\phi_j^{\C} |_{\gamma^{\tau}_{x, \xi}}$  might achieve growth saturation only  on  intervals $I_j \subset \R$ which 
get translated  to infinity
as $j \to \infty$. Viewed on the compact manifold $M$, the intervals have limit sets but  they might consist of  arcs along different geodesics, i.e. the saturating mass might jump in the limit to another geodesic.  

To gain some partial compactness, we consider the two-parameter family
\eqref{FAMILYs}  of
restrictions $\gamma_{G^s(x, \xi)}^* \phi_j$ as $s, \lambda_j$ vary.  For fixed $\lambda_j$ this is the family 
of translates of $\gamma_{x, \xi}^* \phi_j$. Of course, this  family is non-compact in  $C_b(\R)$ since  $\gamma_{x, \xi}^* \phi_j$ is
not an almost-periodic function.

\begin{maintheo}\label{MAINCORnonper} Let $(M^2, g)$ be a real analytic Riemannian surface  with ergodic
geodesic flow. Let $\gamma_{x, \xi}$ be a non-periodic geodesic satisfying the assymetry QER hypothesis of Definition \ref{ANC}.   Then there
exists a subsequence of eigenvalues $\lambda_{j_k}$ of density one and a sequence $\{N_k\} \subset \R$
such that for any $f \in C_c(S_{\epsilon})$,
$$\lim_{k \to \infty} \sum_{(t + i \tau):\; (\gamma_{(x, \xi)}^{*}\phi_{j_k}^{\C}(t + N_k + i \tau)) = 0} f(t + i
\tau ) =  \frac{1}{\pi}  \int_{\R} f(t) dt.  $$
\end{maintheo}
Thus, we obtain a result parallel to that of Theorem \ref{MAINCOR} except that
 we may have to translate the  origin  $x$   $\gamma_{x, \xi}$  unbounded parameter distances
along the geodesic. 

These  concentration- equidistribution results are   `restricted'
versions of the result of \cite{Z}, which states that for real
analytic $(M, g)$ with ergodic geodesic flow,
\begin{equation} \label{Z} \langle \psi, [\frac{1}{\lambda_j}
\ncal_{\phi_{\lambda_j}^{\C}}]\rangle \to \frac{1}{\pi} \int_{M_{\tau}} \psi
\omega^{m-1} dd^c \sqrt{\rho} \end{equation}  for a density one
subsequence of ergodic eigenfunctions.  Here, $\omega = dd^c \rho$
is the \kahler metric on the Grauert tube induced by $g$ (see
\cite{GS1,LS} and \S \ref{BACKGROUND}). An important point to
observe is that $dd^c \sqrt{\rho}$ is singular along the real
domain, indicating that complex zeros concentrate along the
totally real submanifold $M$. Our results show   that the
singularity is magnified under restriction  to asymmetric geodesics, indeed
it becomes a delta function along the real geodesic. We also refer to
\cite{NV,SZ} for earlier papers studying distribution of complex zeros of
ergodic eigensections. 

It follows that there exist many ``nearly real'' intersections of a complex geodesic 
with the complex nodal line when the geodesic flow is ergodic (i.e. zeros of $\gamma_{x, \xi}^*\phi_j^{\C} $
whose imaginary parts tend to zero with $\lambda_j$). It would be very  interesting  to know  the proportion of ``truly real"
intersection points among these nearly real ones.  There are very few lower bounds known at present on
the number of  real intersection points, except in special cases such as separation
of variables eigenfunctions or eigenfunctions on  flat tori \cite{BR} or  special eigenfunctions and the 
geodesics on the modular
hyperbolic surface \cite{GRS}.    

\subsection{Discussion  of the proofs}

The proofs  involve several principles which played no role in the global
result \eqref{Z}. Some hold in much greater generality and some are specific to geodesics. At the end of the introduction
we discuss the potential generalizations.

One of the main ingredients in the proof is an invariance principle for restrictions
to geodesics in the complex domain that is a simple kind of QUER (quantum uniquely
ergodic restriction) principle.  The main statement (Lemma \ref{LL})  proves the translation invariance of the limit measures of 
$L^2$ normalizations of $|\gamma_{x, \xi}^{*} \phi_j^{\C}(t + i \tau)|^2$ 
along intervals in each  horizontal line of $S_{\tau}$.  Intuitively, it is the restricted version of the standard fact that Wigner measures of eigenfunctions
are almost invariant under the geodesic flow. Since we are restricting to a single geodesic, the result should be translation invariance
of the limit measures. But we obviously need to normalize  $\gamma_{x, \xi}^{*} \phi_j^{\C}(t + i \tau)$ along horizontal lines to
obtain a bounded family of measures and its limit measures.  The non-compactness of $\R$ in the case of non-periodic geodesics forces us
to work on compact sub-intervals.

In the case of periodic geodesics, we can normalize $ \gamma_{x, \xi}^{*} \phi_j^{\C}(t + i \tau)$ for each $\tau$ by dividing by its $L^2$
norm for $t \in [0, L]$ where $L $ is the period of $\gamma_{x, \xi}$ in the real domain (hence also in the complex domain).  When
 $\gamma_{x, \xi}$ is a non-periodic geodesic, there is no canonical choice of normalization and therefore we consider
all possibly choices.
When we pull back under $\gamma_{x, \xi}^{\tau}$, we consider all possible  renormalizations along intervals $I$ as follows:

\begin{maindefin}  \label{Ujdef}  Let  $I_{\tau} \subset \partial S_{\tau} = \{ t + i \tau:  t \in I\}$ be the indicated segment
of $\partial S_{\tau}$. Then define
\begin{equation}  U_j^{I_{\tau}, x, \xi}  := 
 \frac{\gamma_{x, \xi}^{\tau *} \phi_j^{\C}|_{\partial S_{\tau}}}{||\gamma_{x, \xi}^{\tau *} \phi_j^{\C}|_{\partial S_{\tau}}||_{L^2(I_{\tau}, dt)}}, \;\; (\mbox{i.e.}\;\; \int_{I_{\tau}} | U_j^{I_{\tau}, x, \xi} |^2 dt = 1), \end{equation}
where 
$$||f||_{L^2(I_{\tau})} =  \int_{t \in I} |f(G^{t + i \tau}(x, \xi)) |^2 dt. $$


\end{maindefin}

We then associate   Wigner measures to normalized complexified eigenfunctions. As will be explained below,
in the complex domain the relevant theory of pseudo-differential operators is the Toeplitz calculus of \cite{BoGu}. This reflects
the fact that restricted  complexified eigenfuntions concentrate microlocally on the tangent directions to $\gamma_{x, \xi}$. Hence Wigner measures
are defined simply by multiplication operators along $\partial S_{\tau}$: 

\begin{maindefin}  \label{WIGNER} Let $a \in C_c^{\infty} (\partial S_{\tau})$ and set

\begin{equation}  \int_{S_{\tau}} a(t) d W_j^{I_{\tau}, x, \xi}(t) 
: = \langle  a U_j^{I_{\tau}, x, \xi}, U_j^{I_{\tau},  x, \xi} \rangle  = \int_{\partial S_{\tau}} a |U_j^{I_{\tau}, x, \xi} |^2 dt \end{equation}
\end{maindefin}

Our aim is then to  determine the weak* limits of 
$$ \left|  U_j^{I_{\tau}, x, \xi}
\right|^2\;\; \mbox{on}\;\; I_{\tau}\;\; 
\mbox{and on general compact line segments of}  \;\;\; \partial S_{\tau, T}. $$
Since $ \left|  U_j^{I_{\tau}, x, \xi}
\right|^2$ is normalized to have mass one on $I_{\tau}$ it forms a pre-compact family of probability measures.  The following Proposition asserts that 
the Wigner distributions
are asymptoticaly  invariant under translation.

\begin{mainprop} \label{LL} (Lebesgue limits)  Let $(M, g)$ be a real analytic Riemannian manifold
of any dimension $m$, Let $(x, \xi) \in S^* M$ be any point. Then  as long 
as $\gamma_{x, \xi}^* \phi_j \not=  0$ (identically),  the 
 sequence $\{| U_j^{I_{\tau}, x, \xi}|^2\}$  is QUE  on $\I_{\tau}$ with limit measure given by normalized  Lebesgue measure on
$\I_{\tau}$. 
That is, for any  $a \in C_c^{\infty}(I_{\tau})$,   we have
$$\lim_{j \to \infty} \int_{\I_{\tau}}  a (t ) d W_{j}^{I_{\tau}, x, \xi} = 
\frac{1}{|\I_{\tau}|} \int_{ \I_{\tau}}  a ( s)  ds. $$

\end{mainprop}

The proof of Proposition \ref{LL} uses the Toeplitz Fourier integral operator calculus of Boutet de Monvel-Guillemin \cite{BoGu}. Toeplitz
operators arise  in the complex domain because the restriction $\gamma_{x, \xi}^{\tau *} V_{\tau}^t$ of the wave
group $V^t_{\tau}$ on $\partial M_{\tau}$  is a Toeplitz Fourier integral operator (we refer to \S \ref{WG} for the
notation). This is the analogue in the complex domain of the operator $W = \gamma_{x, \xi}^* U(t)$  studied in \cite{TZ}.
In the real domain this operator is a Fourier integral operator with a one-sided fold singularity; in the complex domain
the analogous operator is of a very different type: it is  a Toeplitz Fourier integral which microlocally lives on the tangent
space to the geodesic.
The main point of the proof is to show that the  Wigner distributions $U^{I, x, \xi}_j$ are almost invariant under time translation. 
 But the only translation  invariant measures on $\R$  are constant multiples of Lebesgue measure.
Since  we  normalized the Wigner distributions
to have integral $1$ over $I_{\tau}$, the constant must be one on that interval.

The behavior of the local  mass on general intervals is not clear apriori when $\gamma_{x,\xi}$ is non-periodic, especially
at parameter distances $t$ exceeding the `Eherenfest time'  $\log \lambda_j$, where the remainders in Egorov type
theorems break down. 
The weak * limits cannot be deduced
from those on  $\partial M_{\epsilon}$ (which were studied in \cite{Z}) since weak* convergence  is not preserved by restriction to sets of
measure zero.

Proposition  \ref{LL} combines with Lemma \ref{HARTOGSIntro} as follows: 
  If any of  limit is $< \tau - \epsilon$ on 
an open set, then by Proposition  \ref{LL} it has to be $< \tau - \epsilon$ on all of $\partial S_{\tau}$. Otherwise,
the normalizations of $U_j^{I, x, \xi}$ would have different exponential orders and Proposition \ref{LL}
could not be true for every interval $I$. We rely on the fact that Proposition \ref{LL} holds
simultaneously for all of the  normalized pullbacks
in Definition \ref{Ujdef}.

We now sketch the proof of Theorem \ref{MAINCOR} on periodic geodesics and of  Proposition \ref{MAINPROP} to highlight the differences between restriction to 
geodesics in the real and complex domains and to indicate the kinds of new phenomena that are needed in the proof.
To prove Proposition \ref{MAINPROP}  in the case of periodic $\gamma_{x, \xi}$, we first prove an integrated version for $L^2$ norms.

\begin{mainlem} \label{L2NORMintro} Let  $\gamma_{x, \xi}$ be a periodic geodesic of period $L.$   Assume that $\{\phi_j\}$ satsifies QER along the
periodic geodesic $\gamma_{x, \xi}$. Let $||\gamma_{x, \xi}^{\tau*} \phi_j^{\C}||^2_{L^2(\partial S_{\tau})}$ be the $L^2$-norm
of the complexified restriction of $\phi_j$ to a period cell  $\partial S^L_{\tau}$ of $\partial S_{\tau}$. Then,
$$\lim_{\lambda_j \to \infty} \frac{1}{\lambda_j} \log ||\gamma_{x, \xi}^{\tau*} \phi_j^{\C}||^2_{L^2(\partial S^L_{\tau})}
= 2 |\tau| .$$
\end{mainlem}

To prove Lemma \ref{L2NORMintro}, we study the      orbital Fourier series of $\gamma_{x, \xi}^{\tau*} \phi_j$
and of its complexification. The orbital Fourier coefficients are 
$$\nu_{\lambda_j}^{x, \xi}(n) = \frac{1}{L} \int_0^{L} \phi_{\lambda_j}(\gamma_{x, \xi}(t)) e^{- \frac{2 \pi i n t}{L}} dt, $$
and the orbital Fourier series is 
\begin{equation} \label{PER} \phi_{\lambda_j}(\gamma_{x, \xi}(t) )= \sum_{n \in \Z}  \nu_{\lambda_j}^{x, \xi}(n)  e^{\frac{2 \pi i n t}{L}}. 
\end{equation}
Hence the analytic continuation of $\gamma_{x, \xi}^{*} \phi_j$  is given by 
\begin{equation} \label{ACPER} \phi^{\C}_{\lambda_j}(\gamma_{x, \xi}(t + i \tau) )= \sum_{n \in \Z}  \nu_{\lambda_j}^{x, \xi}(n)  e^{\frac{2 \pi i n (t + i \tau)}{L}}. \end{equation}
By the Paley-Wiener theorem for Fourier series, the  series converges absolutely and uniformly for $|\tau| \leq \epsilon_0$. 
The growth rate of $\phi^{\C}_{\lambda_j}(\gamma_{x, \xi}(t + i \tau) )$ is thus intimately related to 
the joint asymptotics  of the Fourier coefficients $\nu_{\lambda_j}^{x, \xi}(n)$ in $(\lambda_j, n)$.  We use the
QER hypothesis in the following way:

\begin{mainlem} \label{FCSAT}  Suppose that $\{\phi_{\lambda_j}\}$ is QER along the periodic geodesic $\gamma_{x, \xi}$.
Then for all $\epsilon > 0$, there exists $C_{\epsilon} > 0$ so that
$$\sum_{n: |n| \geq (1 - \epsilon) \lambda_j}  |\nu_{\lambda_j}^{x, \xi}(n)|^2 \geq  C_{\epsilon}. $$

\end{mainlem}

 Lemma \ref{FCSAT}  implies Lemma \ref{L2NORMintro} since it implies that for any $\epsilon > 0$,
$$\sum_{n: |n| \geq (1 - \epsilon) \lambda_j}  |\nu_{\lambda_j}^{x, \xi}(n)|^2 e^{-2 n \tau}  \geq  C_{\epsilon} e^{2\tau(1 - 
\epsilon) \lambda_j}. $$

In essence, we prove  the lower bound in Proposition \ref{MAINPROP}  in the ergodic case by showing that all of
the Fourier coefficients in the allowed energy region $|n| \leq \lambda_j$ are of uniformly large size. Since the top
frequency term dominates and its Fourier coefficient is large, $\gamma_{x, \xi}^*\phi_j^{\C}$ must have maximal
growth. 

The argument sketched above only proves the desired logarithmic growth law of Proposition \ref{MAINPROP}
for $L^2$-norms.   Proposition \ref{LL}  improves it
 to give the local $L^1$-convergence statement of   Proposition \ref{MAINPROP}.

The proof of Theorem \ref{MAINCORnonper} follows the same general outline but is more complicated
for two reasons: first, the Fourier transform of $\gamma_{x, \xi}^* \phi_j$ is the Fourier transform of an
$L^{\infty}$ function and not an $L^2$ function. It can be shown that it is not even a measure and so we 
cannot speak of the `size' of the Fourier coefficients. Hence it
has to be multiplied by a decaying analytic function before the Fourier transform is taken. And as mentioned above,
the Fourier coefficients may saturate the growth bounds  somewhere on $\R$ for each $j$ but the location of the
saturating mass may escape
to infinity along the parameter interval. This explains why we may need to introduce translations $N_j$ into the times.


\subsection{Generalizations} 

There are several natural generalizations of   intersection problems for geodesics
and nodal hypersurfaces to consider: (i) to general real analytic curves $C \subset M^2$
of a real analytic surface; (ii) to general real analytic hypersurfaces $H \subset M^m$ in any dimension;
(iii) to generic or random geodesics in all dimensions. We plan to investigate the generalizations in a subsequent article.

The generalizaton (i) is simplest, since  most of the techniques and results of this article apply to any real analytic curve $C \subset M^2$. 
The main one which does not is Proposition \ref{LL}, in the cases  when the curve is not a geodesic. Moreover,
the unit speed parametrization of $C$ no longer complexifies to an isometric embedding of Grauert tubes,
and it is not as simple to find the growth rate of $\frac{1}{\lambda_j} \log |\phi_j^{\C}(\gamma_C^{\C}(t + i \tau)|$;  it does
not equal   $|\tau|$ when $C$ is not a geodesic, and  one does not get the same concentration of complex zeros along the real points, i.e. there exists
an additional smooth component to the limit distribution of complex zeros. 

The  additional complication in (ii) is that nodal hypersurfaces intersect other hypersurfaces $H$  in
codimension 2 submanifolds rather than discrete points. Hence the limit measure
will be  a locally $L^1$ measure on an a complex $(n-1)$-dimensional  parameter domain.  Instead
of expanding the restriction as a Fourier series or integral we would need to use  eigenfunctions of the hypersurface
Laplacian. 

The generalization (iii) is the most difficult,  and it is not clear at the moment whether a generalization
of Theorem \ref{MAINCOR} to periodic geodesics in higher dimensions exists. We cannot appeal to the QER result of \cite{TZ} in this case. 
The QER result is a quantum analogue of the fact that  unit (co-)vectors in $S^*_H M$, i.e. with footpoint on $H$, form a cross
section to the geodesic flow when $H$ is a hypersurface and thus the first return map is ergodic.  When 
$\dim H < \dim M -1$, $S^*_H M$ is no longer a cross section and the proof in \cite{TZ} does not apply.  It is not clear
that the Fourier coefficients always  have uniformly the same size for $|n| \leq \lambda.$ It is tempting to relate the $L^2$ mass of restricted eigenfunctions to their global mass when the geodesic is
Birkhoff regular (i.e. uniform) but the results so far do not improve on Theorem \ref{MAINCORnonper}.
   However it is possible
to prove a somewhat weaker version of Theorem \ref{MAINCOR} for random geodesics; we postpone the proof
to a subsequent article. 

\subsection{Acknowledgements} Thanks to  Simon Marshall for many helpful conversations on earlier drafts of this article,
and to John Toth for collaboration on related problems in \cite{TZ} and elsewhere.

\section{\label{BACKGROUND}  Geometry of geodesics and  Grauert tubes}

In this section, we discuss the geometry of geodesics and their complexifications in Grauert tubes.  
We need to relate the holomorphic extension of $\gamma_{x, \xi}$  to the Hamilton flow of the Grauert 
tube function.  The relations are valid in all dimensions and so we consider a general real analytic
Riemannian manifold $M^m$ of dimension $m$.

\subsection{Geodesic flow in the real domain}

A potentially confusing point is  that there are two geodesic flows in the real domain, and both are
relevant to the microlocal analysis of wave groups.  Below we denote by $\pi: T^*M \to M$
the standard projection. 

\begin{itemize}

\item The geometer's geodesic flow: the Hamilton flow of the Hamiltonian $H(x,\xi) = |\xi|_g^2 
= \sum_{i, j = 1}^d g^{ij}(x)  \xi_i \xi_j$ on $T^*M $. We denote its flow by $g^t: T^*M \to T^* M$. 
The usual exponential map is defined by
$\exp_x \xi = \pi g^1(x, \xi)$.  On the zero section $0_M \subset T^* M$ it is the identity map.

\item The homogeneous geodesic flow $G^t: T^*M - 0 \to T^* M - 0$  (the bicharacteristic flow of the wave operator): It is the Hamiltonian
flow generated by $H_1(x, \xi) = |\xi|_g = \sqrt{H}$.  It is homogeneous in the sense that
$G^t(x, r \xi) = r G^t(x, \xi)$. It is not defined on $0_M$.

\end{itemize}

\subsection{Grauert tubes}

For background on Grauert tube geometry we refer to
\cite{GS1,GS2,LS}.  A real analytic manifold $M$ always possesses
a complexification $M_{\C}$, i.e. a complex manifold  of which $M$
is a totally real submanifold.  A real analytic metric $g$ then
determines a canonical plurisubharmonic function $\rho_g$ on
$M_{\C}$ whose square root $\sqrt{\rho}$ is known as the Grauert
tube function.  In fact, $\rho (\zeta) = - r^2_{\C}(\zeta,
\bar{\zeta})$ where $r^2_{\C}$ is the holomorphic extension of the
distance-squared function from a neighborhood of the diagonal in
$M \times M$. The $(1,1)$- form $\ddbar \sqrt{\rho}$ has rank $m -
1$ on $M_{\epsilon} \backslash M$, and the leaves of its null foliation (the
`Monge-Amp\`ere' or Riemann foliation)  are the traces of the complexified
geodesics  $\gamma_{\C}(t + i \tau)$. 
The Grauert tubes are defined by
$$M_{\epsilon} = \{\zeta \in M_{\C}: \sqrt{\rho}(\zeta) \leq
\epsilon\}. $$ We define the  K\"ahler form $\omega = \omega_g$  of $M_{\epsilon}$  by
\begin{equation} \label{OMEGAG}  \omega = i^{-1} \ddbar \rho. \end{equation}  The unusual
sign convention (making the \kahler form a  negative rather than positive) (1,1) form) is adopted from \cite{GS1}.
  In terms of  the real operators (with $J^*$ the complex structure operator on 1-forms),
$$d^c = \frac{i}{4 \pi} (\dbar - \partial) =  - \frac{1}{4 \pi} J^*  d , \;\; \;\; dd^c = - d^c d= \frac{i}{2 \pi} \ddbar, $$
we have
\begin{equation} \label{DDC} \omega = - 2 \pi dd^c \rho.  \end{equation}

 Following \cite{GLS}, we define the imaginary time  complexified exponential map by \begin{equation}
\label{EXP} E: (x, \xi) \in B_{\epsilon}^*M \to \exp_x^{\C}
\sqrt{-1} \xi \in M_{\epsilon}. 
\end{equation} 
The following Lemma records the way that $E$ transfers objects between $M_{\epsilon}$ and the co-ball bundle
$B^*_{\epsilon} M$ of radius $\epsilon$ in $T^* M$.

\begin{lem} \label{EPSI} Let $\alpha = \xi \cdot dx$ resp.  $\omega_{T^* M}$ be the canonical 1-form, resp. symplectic form, of
$T^* M$. Then  $E: ( B_{\epsilon}^*M, \omega_{T^* M}) \to (M_{\epsilon}, \omega)$  is a symplectic  diffeomorphism from the co-ball bundle of
radius $\epsilon$ to $M_{\epsilon}$. It has the properties:
\begin{itemize}

\item   $E^* |\xi|_g^2 = \rho_g$ and $E^* |\xi|_g =
\sqrt{\rho}$. 

\item  $E^* \alpha = \Im \dbar \rho = d^c \sqrt{\rho}$ and   $E^* \omega = \omega_{T^* M}.$
\end{itemize}

\end{lem}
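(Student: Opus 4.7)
The plan is to verify the four identities by reducing each to a computation along the imaginary rays of the complex geodesics, which foliate $M_\epsilon \setminus M$ (they are the leaves of the Monge--Amp\`ere foliation mentioned in \S\ref{BACKGROUND}) and which correspond under $E$ to cotangent rays in $B^*_\epsilon M$. The underlying structural fact is that $E$ is the adapted complex exponential map of Guillemin--Stenzel \cite{GS1} and Lempert--Sz\H{o}ke \cite{LS}, for which the statements of the lemma are classical; the job is to assemble them compatibly with the paper's sign conventions.

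The first step is to establish $E^*\rho = |\xi|_g^2$, from which $E^*\sqrt\rho = |\xi|_g$ follows by taking positive square roots. Fix $(x,\xi) \in B^*_\epsilon M$, write $\xi = r v$ with $r = |\xi|_g$ and $|v|_g = 1$, so that $E(x,\xi) = \gamma^{\C}_{x,v}(ir)$. The identity $r_g^2(\gamma_{x,v}(s), \gamma_{x,v}(t)) = (s-t)^2$ holds along any real unit-speed geodesic for small $|s|,|t|$; since both sides are real analytic in $(s,t)$, their holomorphic extensions agree, yielding the polynomial identity $r^2_{\C}(\gamma^{\C}_{x,v}(s), \gamma^{\C}_{x,v}(t)) = (s-t)^2$ on $S_\epsilon \times S_\epsilon$. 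The antiholomorphic involution of $M_{\C}$ fixing $M$ sends $\gamma^{\C}_{x,v}(z)$ to $\gamma^{\C}_{x,v}(\bar z)$, so evaluating at $s = ir$, $t = -ir$ gives $r^2_{\C}(\zeta,\bar\zeta) = -4r^2$ at $\zeta = E(x,\xi)$. With the normalization of $\rho$ in terms of $r^2_{\C}$ recorded in \S\ref{BACKGROUND}, this produces $\rho \circ E = |\xi|_g^2$.

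The second and third bullets are then handled via Hamilton flows. By the first step, $|\xi|_g = E^*\sqrt\rho$, and the Hamilton flow of $\sqrt\rho$ with respect to $\omega$ is tangent to the Monge--Amp\`ere leaves (a consequence of $(\ddbar\sqrt\rho)^m = 0$) and parametrizes each complex geodesic by imaginary-time arc length; by construction of $E$, this flow pulls back to the homogeneous geodesic flow $G^t$ on $B^*_\epsilon M - 0$. Because the two flows are intertwined and generated by Hamiltonians related by $E^*$, the symplectic forms $E^*\omega$ and $d\alpha$ must agree up to a constant, which is pinned to $1$ by comparing the two forms on a single tangent plane along $M$. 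The identity $E^*\alpha = \Im\dbar\rho = d^c\sqrt\rho$ then follows by matching primitives: both sides vanish on the zero section (where $\sqrt\rho$ attains its minimum, so $\dbar\rho = 0$), both are semi-basic along the leaves, and along each leaf they agree with $r\,ds$ where $s$ is real arc length and $r$ is the radial coordinate; taking exterior derivatives recovers the symplectic identity. The main obstacle will be bookkeeping of the paper's unusual conventions $\omega = i^{-1}\ddbar\rho$ and the constants $i/(4\pi)$ hidden in $d^c$, which must be tracked precisely so that the stated identities hold on the nose rather than up to a multiplicative constant.
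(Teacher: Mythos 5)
The computation in your first step is sound: expressing $\rho\circ E$ via the holomorphic extension of the geodesic distance along a complexified geodesic is correct (modulo the universal normalization constant absorbed into the definition of $\rho$), and it parallels a computation the paper itself carries out just before Proposition~\ref{ISOMETRY}. The difficulty lies in the second and third bullets.

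You assert that ``by construction of $E$, [the Hamilton flow of $\sqrt\rho$] pulls back to the homogeneous geodesic flow $G^t$ on $B^*_\epsilon M-0$.'' This is not a consequence of the construction of $E$. What is visible from the construction and analytic continuation is that $E$ carries cotangent rays $\{(x,r\xi):0<r<\epsilon\}$ to imaginary-time rays on the Monge--Amp\`ere leaves $\gamma_{x,\xi}^{\C}(S_\epsilon)$, and that $E(G^t(x,\xi))=\gamma_{x,\xi}^{\C}(t+i|\xi|)$. But the identification of that real-time parametrization of a leaf with the orbit $t\mapsto\exp t\Xi_{\sqrt\rho}$ is precisely the content of Proposition~\ref{IMPID}, whose proof in the paper rests on Lemma~\ref{HAMID}, which in turn rests on Lemma~\ref{EPSI} --- the very statement being proved. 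So the argument is circular at exactly the place where you want to conclude $E^*\omega=\omega_{T^*M}$.

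Even granting the intertwining of the flows, the inference ``the symplectic forms $E^*\omega$ and $d\alpha$ must agree up to a constant'' does not hold. If a vector field $X$ satisfies $\iota_X\omega_1=dH$ and $\iota_X\omega_2=dH$, one can only conclude $\iota_X(\omega_1-\omega_2)=0$; the difference $\omega_1-\omega_2$ may be any closed $2$-form annihilating $X$, of which there are many when $\dim\geq 4$. A single Hamiltonian's flow (even knowing the Hamiltonian itself) does not determine the symplectic form. The subsequent primitive-matching step has the same shape of gap: agreement of two $1$-forms on the $2$-real-dimensional leaves and on the zero section does not determine a $1$-form on a $2m$-dimensional manifold.

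The paper's proof avoids all of this by quoting the structural theorem that underlies it: Guillemin--Stenzel establish $\alpha=\Im\dbar|\xi|_g^2$ on $B^*_\epsilon M$ with respect to the adapted complex structure, and via Kostant--Sternberg produce a symplectic diffeomorphism $\psi\colon(B^*_\epsilon M,\omega_{T^*M})\to(M_\epsilon,\omega)$; it then remains only to identify $E$ with $\psi$, which follows because $\psi\circ E^{-1}$ is a biholomorphism of $M_\epsilon$ fixing the totally real submanifold $M$ pointwise and hence is the identity (\cite{GLS} Prop.\ 1.7 and Thm.\ 1.8). The essential input --- that $E$ is holomorphic for the adapted complex structure, so that $\dbar$ on $B^*_\epsilon M$ matches $E^*$ of $\dbar$ on $M_\epsilon$ --- has no counterpart in your argument and cannot be replaced by Hamilton-flow bookkeeping. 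To repair the proof you would need to either cite the adapted-complex-structure theorem directly or give an independent proof that $E$ is a biholomorphism for that structure.
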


\begin{proof}  This is a reformulation of some results of \cite{GS1, GLS, LS}.  The Theorem on p. 568 of \cite{GS1}
states that $\alpha = \Im \dbar |\xi|_g^2$ on $T^* M$ where $\dbar$ is with respect to the ``adapted complex
structure" on $B^*_{\epsilon} M$. Using a  theorem of Kostant-Sternberg, they produce  a unique diffeomorphism
$\psi: (B^*_{\epsilon} M, \omega_{T^* M} ) \to (M_{\epsilon}, \omega).$ It thus suffices to show that $E = \psi$. 
This follows from  the  uniquness argument in \cite{GLS} Proposition 1.7 and Theorem 1.8. That is, $ \psi \circ E^{-1}$
is a biholomorphic map of $M_{\epsilon}$ which equals the identity on the totally real submanifold $M \subset M_{\epsilon}$,
and therefore must be the identity map.

\end{proof}

Since $(M_{\tau}, \omega_{\rho})$ is a symplectic manifold, we can consider Hamiltonian flows of $\rho$
and $\sqrt{\rho}$.  We denote the Hamilton flow of any function $H$ with respect to $\omega = \omega_{\rho}$  by $\exp t \Xi^{\omega}_H$. When $\omega$ is understood, we 
 denote by $\Xi_{\sqrt{\rho}}$  the Hamilton vector field with respect to $\omega$.   The following Lemma asserts
that $E$ interwines (both) geodesic flows on $B_{\epsilon}^* M - 0$ with Hamilton flows on $M_{\epsilon}$.
\begin{lem} \label{HAMID}
\begin{equation}  \label{EINT} E \circ  G^t = \exp t \Xi_{\sqrt{\rho}} \circ E, \;\;\;\; E \circ g^t = \exp t \Xi_{\rho} \circ E. \end{equation}
\end{lem}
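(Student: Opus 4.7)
\medskip

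The plan is to deduce Lemma \ref{HAMID} as a direct corollary of Lemma \ref{EPSI}, via the standard naturality of Hamilton vector fields under symplectomorphisms.

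First I would recall the basic principle: if $\Phi : (X_1, \omega_1) \to (X_2, \omega_2)$ is a symplectomorphism (so $\Phi^* \omega_2 = \omega_1$), and $H \in C^\infty(X_2)$, then $\Phi^* \Xi_H^{\omega_2} = \Xi_{H \circ \Phi}^{\omega_1}$, and consequently the Hamilton flows are conjugate:
\begin{equation*}
\Phi \circ \exp t \Xi^{\omega_1}_{H \circ \Phi} = \exp t \Xi^{\omega_2}_{H} \circ \Phi.
\end{equation*}
This is immediate from $d(H \circ \Phi) = \Phi^* dH$ together with the defining relation $\iota_{\Xi_H} \omega = -dH$.

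Next I would apply this with $\Phi = E$, $(X_1, \omega_1) = (B^*_\epsilon M, \omega_{T^*M})$, $(X_2, \omega_2) = (M_\epsilon, \omega)$. Lemma \ref{EPSI} supplies exactly the two ingredients required: (a) $E^* \omega = \omega_{T^*M}$, i.e. $E$ is a symplectomorphism; and (b) the Hamiltonian identities $E^*\rho = |\xi|_g^2$ and $E^*\sqrt{\rho} = |\xi|_g$. Hence by the naturality principle above,
\begin{equation*}
E^* \Xi_\rho = \Xi_{|\xi|_g^2} = \text{generator of } g^t, \qquad E^* \Xi_{\sqrt{\rho}} = \Xi_{|\xi|_g} = \text{generator of } G^t,
\end{equation*}
on $B^*_\epsilon M \setminus 0$ (noting that $\sqrt{\rho}$ is only smooth off the zero section, matching the fact that $G^t$ is only defined on $T^*M \setminus 0$). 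Exponentiating yields the two intertwining identities \eqref{EINT}.

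I do not foresee any real obstacle: the whole content is encoded in Lemma \ref{EPSI}, and the argument is a one-line application of naturality of Hamiltonian dynamics under symplectomorphisms. The only minor point worth flagging is the domain of definition for the $\sqrt{\rho}$ statement, since $\sqrt{\rho}$ fails to be smooth along the totally real submanifold $M \subset M_\epsilon$; this is harmless because $G^t$ likewise is only defined on $T^*M \setminus 0$, and $E$ carries $0_M$ to $M$, so the identity is asserted (and needed) only on the complement.
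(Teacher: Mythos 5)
Your proof is correct and takes essentially the same approach as the paper: the paper's own argument is the one-line observation that Lemma \ref{EPSI} gives $E^*\sqrt{\rho} = |\xi|$, $E^*\rho = |\xi|^2$, and $E^*\omega = \omega_{T^*M}$, from which the intertwining of Hamilton flows is immediate by naturality. You have simply spelled out the naturality principle explicitly and added the (correct) remark about the domain issue off the zero section.
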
 

\begin{proof}
It follows from Lemma \ref{EPSI} that  $E^* \sqrt{\rho} = |\xi|$ and that $E^* \omega = \omega_{B^* M}$. Hence
$E$ intertwines the associated Hamiltonian flows. Similarly for the Hamiltonian flow of $\rho$.

\end{proof}

We recall   Liouville measure on each sphere bundle
$S^*_{\epsilon} M$ of radius $\epsilon$ is given by $d\mu_L =
\alpha \wedge \omega_{T^* M}^{d-1}$. Under the complexified
exponential map $E$, Liouville measure pulls  to $\partial
M_{\epsilon}$ to
$$E^* d\mu_L = d^c \sqrt{\rho} \wedge \omega^{m-1}. $$

\subsection{\label{CXHAM} Complexified  Hamiltonian flow}

In addition to its \kahler form $\omega_g \in \Omega^{1,1}(M_{\C})$, the Grauert tube carries the complex
holomorphic metric
\begin{equation} \label{gC} g_{\C} = \sum_{i, j= 1}^n g^{ij}(\zeta) d \zeta_i \otimes d \zeta_j  \end{equation}
in local holomorphic coordinates on $M_{\C}$. 
The complexified geodesic flow
\begin{equation} \label{GT} g_{\C}^{t + i \tau}: T^* M_{\C} \to T^* M_{\C}\end{equation} is the partially defined 
 Hamiltonian flow of the holomorphic Hamiltonian,
\begin{equation} \label{HC} H_{\C}(\zeta, \xi) = \sum_{i, j= 1}^n g^{ij}(\zeta)  \xi_i \xi_j \end{equation}
on $T^* (M_{\C})$ with respect to its holomorphic symplectic form 
$$\sigma = \sum_j d \zeta_k \wedge d \xi_k$$
where $\xi_k$ are the coordinates of a (1,0)-form in the basis $d \zeta_k$. 
The Hamiltonian vector field is then
\begin{equation} \label{XI} \Xi_{H} = \sum_{i, j= 1}^n g^{ij}(\zeta)  \xi_i \frac{\partial}{\partial \zeta_j}
- \sum_{i, j= 1}^n \frac{\partial g^{ij}(\zeta) }{\partial \zeta_k}  \xi_i \xi_j \frac{\partial}{\partial
\xi_k}.  \end{equation}
 In these coordinates the canonical one-form on 
$T^* M_{\C}$ is given by 
\begin{equation} \alpha_{T^* M_{\C} }= \sum_j \xi_j d \zeta_j. \end{equation}
We also denote by $\pi: T^* M_{\C} \to M_{\C}$ the natural projection. 

The following Lemma  relates $\gamma_{x, \xi}^{\C}(t + i \tau)$ to the holomorphic
geodesic flow. 

\begin{lem} \label{pi} Let $(x, \xi) \in T^* M$ and let   $\gamma_{x, \xi}(t + i \tau) : \R \to M_{\tau}$  be the analytic continuation of $\gamma_{x, \xi} (t)
= \exp_x t \xi$. Then:

\begin{itemize}

\item  $\gamma_{x, \xi}(t + i \tau) = \pi g_{\C}^{t + i \tau} (x, \xi). $ 

\item  $E(x, \xi) = \pi g^1_{\C} (x, i \xi). $

\item $g_{\C}^{t + i \tau} = g_{\C}^{i \tau} g_{\C}^t. $

\item  $\gamma_{x, \xi}(t + i \tau) = 
\pi G^{t + i \tau}(x, \xi) = \pi G^{i \tau} G^t(x, \xi) = \gamma_{G^t(x, \xi)} (i \tau). $

\end{itemize} 

\end{lem}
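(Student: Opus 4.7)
The plan is to deduce all four assertions from their real-time analogues by analytic continuation, using the fact that the Hamilton flows of the holomorphic Hamiltonians $H_{\C}$ and $\sqrt{H_{\C}}$ depend holomorphically on the complex time parameter. The only real-analytic input is that $\gamma_{x, \xi}(t) = \pi g^t(x, \xi)$ (which is built into the definition $\exp_x \eta = \pi g^1(x, \eta)$ recalled at the start of \S\ref{BACKGROUND}, together with the homogeneity of $g^t$) and that, for $(x, \xi) \in S^*M$, one also has $\gamma_{x, \xi}(t) = \pi G^t(x, \xi)$ by definition of the homogeneous geodesic flow.

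For (1), both sides of $\gamma_{x, \xi}(t + i \tau) = \pi g_{\C}^{t + i \tau}(x, \xi)$ are holomorphic in $z = t + i \tau$ on a strip around $\R$: the left-hand side by construction of $\gamma_{x,\xi}^{\C}$, and the right-hand side because $g_{\C}^{z}$ is a holomorphic one-parameter family of symplectomorphisms defined on an open neighborhood of the real section where $H_{\C}$ is holomorphic. They agree on $\R$, so the identity theorem gives agreement on the overlap of their domains of holomorphy. For (2), the real identity $\exp_x \eta = \pi g^1(x, \eta)$ is real-analytic in $\eta$, extends holomorphically in $\eta$ to $T^*M_{\C}$, and evaluation at $\eta = i \xi$ yields $E(x, \xi) = \exp^{\C}_x(i \xi) = \pi g_{\C}^1(x, i \xi)$. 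For (3), the one-parameter group law $g^{s + t} = g^s \circ g^t$ extends from $\R^2$ to a holomorphic identity in $(s, t)$ on its natural complex domain, which specializes at $s = i\tau$ to the claimed factorization.

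For (4), the same machinery applies with $H_{\C}$ replaced by $\sqrt{H_{\C}}$ (holomorphic away from the zero section), so the flow $G_{\C}^z$ depends holomorphically on $z$; analytic continuation of $\gamma_{x, \xi}(t) = \pi G^t(x, \xi)$ from the real axis gives the first equality, the group property for $G_{\C}^z$ (proved exactly as in (3)) gives $G_{\C}^{t + i\tau} = G_{\C}^{i\tau} \circ G_{\C}^t$, and applying the $t = 0$ case at the base point $G^t(x, \xi)$ yields $\pi G^{i\tau}(G^t(x, \xi)) = \gamma_{G^t(x, \xi)}(i\tau)$. The only non-formal point, which I expect to be the main (mild) obstacle, is to ensure the complex-time flows remain within the domain on which $H_{\C}$ and $\sqrt{H_{\C}}$ are holomorphic so that the identity theorem applies in a connected complex neighborhood of the real axis; this is precisely guaranteed by working inside a Grauert tube of radius $\epsilon$ and by Lemmas \ref{EPSI}--\ref{HAMID}, so no further analytic input is required.
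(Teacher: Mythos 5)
Your proposal is correct and follows exactly the paper's approach: the paper proves Lemma \ref{pi} with a single sentence, ``these identities hold for real $t$ and therefore analytically continue to complex $t$,'' which is precisely the argument you spell out in detail (including the correct observation that the identity $\gamma_{x,\xi}(t) = \pi G^t(x,\xi)$ requires $(x,\xi)\in S^*M$).
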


Indeed, these identities hold for real $t$ and therefore analytically continue to complex $t$.

\subsection{\label{HAMFLOWREL} Hamiltonian flow of $\sqrt{\rho}$ and complexified geodesics}

In this section we relate $\gamma_{x, \xi}^{\C}(t + i \tau)$ to the Hamilton orbits of $\Xi_{\rho}$. Although
both arise from Hamilton flows of metric functions, it is not obvious that the holomorphic configuration space
orbits  $\gamma_{x, \xi}^{\C}(t + i \tau)$  should be the same curves for fixed $\tau$ as the `phase space'
Hamilton orbits $\exp t \Xi_{\rho}$.  Thus, our purpose is to relate the following two families
of real curves:
\begin{itemize}

\item  Orbits of the Hamilton flow of $\sqrt{\rho}$ with respect to the \kahler form $\omega_{\rho}$
on the level set $\partial M_{\tau}$, i.e. with  $\sqrt{\rho} = \tau$;

\item The complex curve  $\gamma_{x, \xi}(t + i \tau)$  for fixed $\tau$. 

\end{itemize}

As a (simple) example we consider the flat torus $\R^n/\Z^n$. In this case $E(x, \xi) = x + i \xi$ and
$$\left\{ \begin{array}{l} \gamma_{x, \xi}(t) = x + t \frac{\xi}{|\xi|}, \;\;\; \gamma_{x, \xi}(t + i \tau) = x + (t + i \tau) \frac{\xi}{|\xi|},\;\;\\ \\
\mbox{for}\; \sqrt{\rho} (E(x, \xi) )=  |\xi| = \tau\;\;\; \exp t \Xi_{\sqrt{\rho}}(x + i \xi) = E(x + t \frac{\xi}{|\xi|},  \xi) = x +( t + i \tau) \frac{\xi}{|\xi|}
\end{array} \right. . $$

 Another way to contrast the two flows is the following:   $\partial M_{\tau}$ is a contact co-isotropic manifold for $\omega_{\rho}$ and the flow lines
of the Hamilton flow of $\sqrt{\rho}$ or of $\rho$ form  its real one dimensional  null-foliation. On the other hand, $dd^c \sqrt{\rho}$ has a complex one dimensional  null
foliation. We wish to relate the real null-foliation for $\omega_{\rho}$ on $\partial M_{\tau}$ to  the holomorphic
null foliation for  $dd^c \sqrt{\rho}$ on all of $M_{\tau}$.

\begin{prop}  \label{IMPID} The orbit of the Hamiltonian flow of $\sqrt{\rho}$ through $\gamma_{x, \xi}(i \tau)$ on $\partial M_{\tau}$
is the curve $t \to \gamma_{x, \xi}(t + i \tau)$.  
That is, $ \exp t \Xi_{\sqrt{\rho}} (E(x, \xi)) =  \gamma_{x, \xi}(t + i |\xi|)$. 
  \end{prop}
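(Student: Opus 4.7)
The cleanest route is to invoke the intertwining property of $E$ established in Lemma \ref{HAMID}, which gives $E \circ G^t = \exp t \Xi_{\sqrt{\rho}} \circ E$ and hence
\[
  \exp t \Xi_{\sqrt{\rho}}\bigl(E(x,\xi)\bigr) \;=\; E\bigl(G^t(x,\xi)\bigr).
\]
Thus the proposition reduces to the purely geometric identity $E(G^t(x,\xi)) = \gamma_{x,\xi}(t + i|\xi|)$, which I would establish by first treating $t = 0$ and then using the shift property of geodesics, everything ultimately resting on analytic continuation from the real-parameter identities.

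\textbf{Step 1: the base case $t=0$.} I would first verify that $E(x,\xi) = \gamma_{x,\xi}(i|\xi|)$. By Lemma \ref{pi}(2), $E(x,\xi) = \pi g^1_{\C}(x, i\xi)$, while Lemma \ref{pi}(1) gives $\gamma_{x,\xi}(i|\xi|) = \pi g^{i|\xi|}_{\C}(x,\xi)$. So I need the scaling identity $\pi g^1_{\C}(x, i\xi) = \pi g^{i|\xi|}_{\C}(x,\xi)$. Over the reals, rescaling the initial covector by $\lambda > 0$ simply reparametrizes the configuration-space orbit of the geodesic spray: $\pi g^s(x, \lambda\xi) = \pi g^{\lambda s}(x,\xi)$, a direct consequence of the quadratic homogeneity of $H = |\xi|_g^2$. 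Both sides are holomorphic in $\lambda$, so analytic continuation in $\lambda$ to $\lambda = i|\xi|$ (with $s = 1/|\xi|$) delivers the desired equality.

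\textbf{Step 2: the general $t$.} Applying Step~1 to $G^t(x,\xi)$ (whose covector still has norm $|\xi|$ since $G^t$ preserves $|\xi|_g$), one obtains
\[
  E\bigl(G^t(x,\xi)\bigr) \;=\; \gamma_{G^t(x,\xi)}\bigl(i|\xi|\bigr).
\]
Then by Lemma \ref{pi}(4), the complexified shift identity $\gamma_{G^t(x,\xi)}(i|\xi|) = \gamma_{x,\xi}(t + i|\xi|)$ is precisely what is needed. Combined with the intertwining relation of the first paragraph, this yields $\exp t \Xi_{\sqrt{\rho}}(E(x,\xi)) = \gamma_{x,\xi}(t + i|\xi|)$.

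\textbf{Main obstacle.} The only delicate ingredients are the scaling identity $\pi g^s_\C(x,\lambda\xi) = \pi g^{\lambda s}_\C(x,\xi)$ and the shift identity $\gamma_{G^t(x,\xi)}(s) = \gamma_{x,\xi}(t+s)$: both are essentially tautologies for real parameters (where they record reparametrization invariance of real geodesics) but require care in the complex domain. The justification in both cases is the same: both sides extend to holomorphic functions on a common domain in the relevant complex time/scaling variable, and agreement on the real axis forces agreement on the whole domain. Once this analytic-continuation bookkeeping is in place, the proposition is a direct consequence of the fact that $E$ is a symplectomorphism pulling $\sqrt{\rho}$ back to $|\xi|_g$ (Lemmas \ref{EPSI} and \ref{HAMID}), together with the characterization of complexified geodesics via $\pi g^{t+i\tau}_\C(x,\xi)$ in Lemma \ref{pi}.
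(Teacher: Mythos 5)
Your proposal is correct and follows essentially the same route as the paper's proof: both begin by reducing via Lemma \ref{HAMID} to the identity $E(G^t(x,\xi)) = \gamma_{x,\xi}(t + i|\xi|)$, and both then establish that identity by analytically continuing a real-parameter reparametrization of the geodesic. The only real difference is bookkeeping: the paper proves the combined formula \eqref{EXPFORM}, $\exp_{\gamma(t)} is\dot\gamma(t) = \gamma(t+is)$, in one stroke by continuing $\beta(r) = \gamma(t+rs)$ to $r=i$, whereas you factor the same reparametrization into a scaling step (your Step~1, the $t=0$ case) plus a shift step (your Step~2, which invokes Lemma \ref{pi}(4)); these are the two degrees of freedom in the paper's single $\beta(r)$. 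One small caution in your Step~1: the scaling identity $\pi g^s_{\C}(x,\lambda\xi) = \pi g^{\lambda s}_{\C}(x,\xi)$ with $s=1$, $\lambda = i$ gives $\pi g^1_{\C}(x,i\xi) = \pi g^i_{\C}(x,\xi)$, not $\pi g^{i|\xi|}_{\C}(x,\xi)$; the statements agree only because (under the unit-speed normalization implicit in the Proposition and in Lemma \ref{pi}(4)) one has $|\xi|=1$, and the extra factor of $|\xi|$ in your parameter choices reflects the paper's own inconsistency between the $\exp_x t\xi$ convention in Lemma \ref{pi}(1)--(2) and the arclength convention in Lemma \ref{pi}(4) and in the Proposition itself. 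This is a convention slip rather than a gap in the argument.
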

\begin{proof}  Let $(x, \xi) \in T^* M$ be a real co-vector.   It follows from Lemma
\ref{HAMID} that $\exp t \Xi_{\sqrt{\rho}}
(E(x, \xi)) = E \circ G^t(x, \xi)$ and then by Lemma \ref{pi} that
\begin{equation}\label{1} \pi G_{\C}^{t + i }(x, \xi)  = \pi G_{\C}^i G_{\C}^t(x, \xi) = E(G^t(x, \xi)) = \exp t \Xi_{\sqrt{\rho}} (E(x, \xi)). \end{equation}

Also,  we have (\cite{GLS}) 
\begin{equation} \label{EXPFORM} \exp_{\gamma(t)} i s \dot{\gamma}(t) = \gamma(t + i s ).  \end{equation}
Indeed, let  $\beta(r)$ be the geodesic with initial conditions $\beta(0), \dot{\beta}(0) = (\gamma(t), s \dot{\gamma}(t))$.
By definition, $\exp_{\gamma(t)} i s \dot{\gamma}(t)$ is the analytic continuation of $r \to \beta(r)$ at $r = i$. 
But $\beta(r) = \gamma(t + r s)$ for real $r$ so $\beta(i) = \gamma(t + i s)$. Therefore
\eqref{EXPFORM}  holds. 
Since $E(x, \xi) = \exp_x i \xi$ this says, 
$$E(\gamma(t),  s \dot{\gamma}(t)) = \gamma(t + i s). $$
Putting $s = |\xi|$ gives
\begin{equation} \label{MOREINT} E(G^t(x, \xi) ) = \gamma_{x, \xi}(t + i |\xi|). \end{equation}
Combining \eqref{1} and \eqref{MOREINT}  we have   \begin{equation} \exp t \Xi_{\sqrt{\rho}} (E(x, \xi)) = E \circ G^t(x, \xi) =  \gamma_{x, \xi}(t + i |\xi|), \end{equation}
completing the proof.

\end{proof}
%

The following Corollary is also in \cite{LS}.
\begin{cor}  $\gamma_{x, \xi}(i \tau)$ is a flow line of the gradient field $\nabla \sqrt{\rho}$.
\end{cor}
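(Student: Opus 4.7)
The plan is to deduce this directly from Proposition \ref{IMPID} together with two general facts: the Cauchy-Riemann equations for the holomorphic map $\gamma_{x,\xi}^{\C}: S_{\epsilon} \to M_{\epsilon}$, and the standard Kähler identity relating the Hamiltonian vector field and the Riemannian gradient of a function. This lets me trade the $t$-direction (already identified with the Hamilton flow of $\sqrt{\rho}$) for the $\tau$-direction by applying the complex structure $J$, and then recognize the result as the gradient flow.

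Concretely, I would fix a unit covector $(x,\xi) \in S^* M$, so that for each $\tau > 0$, Proposition \ref{IMPID} applied to the rescaled covector $\tau\xi$ (with $|\tau\xi| = \tau$) gives $\exp t\,\Xi_{\sqrt{\rho}}(E(x,\tau\xi)) = \gamma_{x,\xi}(t + i\tau)$. Differentiating at $t = 0$, this yields
\begin{equation*}
\tfrac{\partial}{\partial t}\gamma_{x,\xi}(t + i\tau)\big|_{t = 0} = \Xi_{\sqrt{\rho}}\bigl(\gamma_{x,\xi}(i\tau)\bigr).
\end{equation*}
Since $\gamma_{x,\xi}^{\C}$ is holomorphic on the strip $S_{\epsilon}$, the Cauchy-Riemann equation gives $\partial_{\tau}\gamma_{x,\xi}^{\C} = J\,\partial_t \gamma_{x,\xi}^{\C}$, where $J$ is the adapted complex structure on $M_{\epsilon}$. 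Therefore
\begin{equation*}
\tfrac{\partial}{\partial \tau}\gamma_{x,\xi}(i\tau) = J\,\Xi_{\sqrt{\rho}}\bigl(\gamma_{x,\xi}(i\tau)\bigr).
\end{equation*}

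The final step is to identify $J\,\Xi_{\sqrt{\rho}}$ with (up to sign) the Riemannian gradient $\nabla\sqrt{\rho}$ with respect to the Kähler metric $g$ associated to $\omega = i^{-1}\ddbar\rho$ on $M_{\epsilon}$. This is the standard Kähler identity: from $df(Y) = \omega(\Xi_f, Y) = g(J\Xi_f, Y)$ and $df(Y) = g(\nabla f, Y)$ one reads off $\nabla f = \pm J\Xi_f$, the sign being determined by the chosen orientation convention for $\omega$. Applying this with $f = \sqrt{\rho}$ produces $\tfrac{\partial}{\partial\tau}\gamma_{x,\xi}(i\tau) = \pm\nabla\sqrt{\rho}(\gamma_{x,\xi}(i\tau))$, proving the corollary.

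The only delicate point, and the one I would double-check, is the sign: the paper adopts the non-standard convention $\omega = i^{-1}\ddbar\rho$ (from Guillemin-Stenzel) rather than the usual $+i\ddbar\rho$, so the identification $\nabla f = \pm J\Xi_f$ needs to be verified against this convention. The rest of the argument is purely formal once Proposition \ref{IMPID} is in hand.
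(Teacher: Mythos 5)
Your proof is correct. The paper itself gives no argument here; it merely notes that the statement ``is also in \cite{LS}.'' Your derivation --- differentiate the identity of Proposition \ref{IMPID} in $t$ to obtain $\partial_t\gamma_{x,\xi}(t+i\tau)\big|_{t=0} = \Xi_{\sqrt\rho}$, pass to the $\tau$-direction via the Cauchy--Riemann relation $\partial_\tau\gamma_{x,\xi}^{\C} = J\,\partial_t\gamma_{x,\xi}^{\C}$ for the holomorphic map $\gamma_{x,\xi}^{\C}$, then identify $J\,\Xi_{\sqrt\rho}$ with $\pm\nabla\sqrt\rho$ via the K\"ahler relation $\omega(X,Y)=\pm g(JX,Y)$ --- is exactly the natural way to extract the corollary from Proposition \ref{IMPID}, and all three steps are sound. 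You are also right to flag the sign: with the Guillemin--Stenzel convention $\omega = i^{-1}\ddbar\rho$ (so $\omega$ is a \emph{negative} $(1,1)$-form) one gets $\nabla\sqrt\rho = -J\,\Xi_{\sqrt\rho}$, but since the corollary only asserts that $\gamma_{x,\xi}(i\tau)$ is a flow line of the gradient \emph{field} (not of the positive-time gradient flow), the sign ambiguity is harmless --- a reparametrization $\tau\mapsto-\tau$ absorbs it.
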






\subsection{\label{KISOMEM} \kahler isometric embedding of geodesics}

 As in the introduction, we let \eqref{GAMMAX}  be an arc-length
 parametrized geodesic, and let \eqref{gammaXCX}  denote its
analytic extension to a strip.  The special properties of geodesics are given in the following

\begin{prop} \label{ISOMETRY}  The map $\gamma_{x, \xi}: S_{\epsilon} \to M_{\epsilon}$ is 
a \kahler isometric embedding. More preicsely,
\begin{enumerate}

\item $\rho\gamma_{x, \xi}(t + i \tau) = \tau^2; $

\item $\sqrt{\rho}(\gamma_{x, \xi}(t + i \tau) = |\tau|; $ 

\item $\gamma_{x, \xi}^* \ddbar \rho = dt \wedge d \tau$;

\item  $\gamma_{x, \xi}^* \ddbar \sqrt{\rho} = \delta_0(\tau) dt d \tau. $

\end{enumerate} \end{prop}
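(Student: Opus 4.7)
The plan is to establish (1) first by tracking how $\gamma_{x,\xi}(t+i\tau)$ sits in the Grauert tube via the complexified exponential map $E$, and then deduce (2)--(4) from it by direct computation in the strip coordinates $(t,\tau) \in S_{\epsilon}$.

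For part (1), the key identity is obtained by combining Lemma \ref{pi} and the formula \eqref{EXPFORM}. Namely, Lemma \ref{pi} gives
$$\gamma_{x,\xi}(t+i\tau) = \gamma_{G^t(x,\xi)}(i\tau),$$
and \eqref{EXPFORM} (applied at parameter $t$ with initial velocity $\dot\gamma_{x,\xi}(t)$) rewrites this as $\exp_{\gamma_{x,\xi}(t)} i\tau\dot\gamma_{x,\xi}(t) = E(\gamma_{x,\xi}(t),\tau\dot\gamma_{x,\xi}(t))$. Since $(x,\xi)\in S^*M$ is unit and $G^t$ preserves the norm, $|\dot\gamma_{x,\xi}(t)|_g \equiv 1$, so the cotangent vector $\tau\dot\gamma_{x,\xi}(t)$ has norm exactly $|\tau|$. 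Now the first bullet of Lemma \ref{EPSI} gives $E^*\rho = |\xi|_g^2$, hence
$$\rho\bigl(\gamma_{x,\xi}(t+i\tau)\bigr) = |\tau\dot\gamma_{x,\xi}(t)|_g^2 = \tau^2,$$
which is (1). Part (2) is immediate by taking square roots, using that $\sqrt{\rho}\geq 0$.

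For parts (3) and (4) we work directly on the strip $S_{\epsilon}$ with holomorphic coordinate $z=t+i\tau$. Having proved $\gamma_{x,\xi}^*\rho = \tau^2$ and $\gamma_{x,\xi}^*\sqrt{\rho}=|\tau|$, the pullback commutes with $\ddbar$ (in the distributional sense for $|\tau|$), so we must compute $\ddbar$ of a function depending only on $\tau$. For any such $f(\tau)$ one has
$$\ddbar f(\tau) = \tfrac{1}{4} f''(\tau)\, dz\wedge d\bar z,$$
in the paper's convention, with $f''$ interpreted distributionally when needed. Applying this to $f(\tau)=\tau^2$ yields (3), since $f''\equiv 2$. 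Applying it to $f(\tau)=|\tau|$, whose distributional second derivative is $2\delta_0(\tau)$, yields (4) with the delta-function concentration on the real geodesic. The appearance of $\delta_0(\tau)\,dt\,d\tau$ in (4) is exactly the mechanism by which the singularity of $dd^c\sqrt{\rho}$ along the real locus $M\subset M_{\C}$ is captured under restriction to a geodesic.

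The step that carries all the geometric content is (1); parts (2)--(4) are then routine. The only substantive point, and the reason this Proposition is special to \emph{geodesics} rather than arbitrary real analytic curves, is the use of the constancy $|\dot\gamma(t)|_g\equiv 1$ together with $E^*|\xi|_g^2=\rho$: these two facts combine to say that the Grauert tube function restricted to the complexified geodesic depends \emph{only} on $\tau$ and equals $\tau^2$. For a non-geodesic real analytic curve, neither the geodesic-flow identity of Lemma \ref{pi} nor the isometric identification of $\tau\dot\gamma(t)$ with a point of norm $|\tau|$ in the cotangent bundle is available, so $\rho$ restricted to its complexification is no longer a function of $\tau$ alone, and one loses the clean \kahler isometric embedding statement.
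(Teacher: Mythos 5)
Your proof is correct and follows essentially the same route as the paper: part (1) via the identification $\gamma_{x,\xi}(t+i\tau)=E(\gamma_{x,\xi}(t),\tau\dot\gamma_{x,\xi}(t))$ combined with $E^*\rho=|\xi|_g^2$ from Lemma \ref{EPSI}, and parts (3)--(4) by $\ddbar$ of a function of $\tau$ alone (the paper phrases this as "the last two follow from the first two since $\gamma_{x,\xi}$ is holomorphic"). Note only that the literal constant in your $\ddbar f(\tau)=\tfrac14 f''(\tau)\,dz\wedge d\bar z$ gives $\ddbar\tau^2=-i\,dt\wedge d\tau$ rather than $dt\wedge d\tau$; the paper's own statement is equally loose about this normalization, so it does not affect the substance.
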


This gives another  proof of Lemma \ref{IMPID} by showing that the Hamilton orbits  $\exp t \Xi_{\sqrt{\rho}}$ and  $\exp t \Xi_{\rho}$ are length minimizing curves
between their endpoints.   Indeed, by \cite{GLS} (p. 686), 
$$d^2 (q, \exp_q i \xi) = - |\xi|_q^2. $$
Similarly for $\rho(\exp_q i \xi)  = d^2(\exp_q i \xi, \exp_q - i \xi) $, which is determined from
analytic extension of $d^2(\exp_q \xi, \exp_q - \xi)$:

$$\rho (\gamma_{x, \xi}(t + i \tau) ) = d^2( \gamma_{G^t(x, \xi)} (i \tau),  \gamma_{G^t(x, \xi)} (-i \tau)) = 
- 4 \tau^2. $$

\begin{proof} Since $\gamma_{x, \xi}$ is holomorphic, the last two statements follow from the first
two, which are equivalent. The first statement follows from the fact that the holomorphic extension of
$\gamma_{x, \xi}$ corresponds under $E$ \eqref{EXP}  to the homogeneous lift of $\gamma_{x, \xi}$
to $T^* M$, i.e.
$$\tilde{\gamma}_{x,\xi}(t + i \tau) = \tau \gamma_{x, \xi}'(t) : S_{\epsilon} \to B^* M $$
and the holomorphicity of this map is essentially the definition of the adapted complex structure
in \cite{LS,GS1}.  Thus (as in Proposition \ref{IMPID}),  $\gamma_{x, \xi}(t + i \tau) = E(\gamma_{x, \xi}(t),  \tau \gamma_{x, \xi}'(t)) $.

\end{proof}


\section{\label{S} Szeg\"o kernels on boundaries of Grauert tubes}

The proofs of Proposition \ref{L2NORMintro} and Proposition \ref{LL} are based on microlocal analysis
in the complex domain.  In this section and the next, we introduce the basic objects of microlocal analysis
on Grauert tubes: the  \szego projectors and Poisson extension operator.  To the extent possible, we refer to \cite{BoGu,BoSj, GS2,Z} for background.
We include further background on Fourier integral operators with complex phase in the
Appendix \S \ref{APPENDIX}.

As in \cite{Z}, we study the analytic
continuation of eigenfunctions via the Poisson operator
\begin{equation} \label{POISSON} P^{\tau} : L^2(M) \to \ocal^0(\partial M_{\tau}) , \;\;\;
P^{\tau}(\zeta, y)  = \sum_j e^{- \tau \lambda_j} \phi_j^{\C}(\zeta) \phi_j(y), \end{equation}
which is defined by analytically continuing the Schwartz kernel of the Poisson semi-group
\begin{equation} \label{UITAU} U^{i \tau}= e^{- \tau \sqrt{\Delta}} : L^2(M) \to L^2(M)  \end{equation}
in the first variable to $\zeta \in M_{\tau}$ and then restricting it to $\partial M_{\tau}$. As reviewed in 
\S \ref{S} - \S \ref{POISSONKERNEL}, $\ocal^0(\partial M_{\tau})$ denotes the Hilbert space of boundary values
of holomorphic functions in $M_{\tau}$ which belong to $L^2(\partial M_{\tau})$.  The Poisson operator
is a Fourier integral operator with complex phase and its wave front is contained in the graph of the complexified
geodesic flow (see \S \ref{POISSONKERNEL}).

\subsection{ \label{SZEGO} \szego projector on boundaries of Grauert tubes}

We denote by $\ocal^{s + \frac{m-1}{4}}(\partial M_{\epsilon})$ the subspace of the Sobolev space $W^{s +
\frac{m-1}{4}}(\partial M_{\epsilon} )$ consisting of CR
holomorphic functions, i.e.
$${\mathcal O}^{s + \frac{m-1}{4}}(\partial M_{\epsilon} ) =
W^{s + \frac{m-1}{4}}(\partial M_{\epsilon} ) \cap \ocal (\partial
M_{\epsilon} ). $$ The inner product on $\ocal^0 (\partial
M_{\epsilon} )$ is with respect to the Liouville measure
$d\mu_{\epsilon}.$

For each $\tau < \epsilon$, the \szego projector
$$ \Pi_{\tau} : L^2(\partial M_{\tau}) \to
\ocal^0(
\partial M_{\tau} )$$  of the tube $M_{\tau}$ is the
 the orthogonal projection onto boundary values of
holomorphic functions in the tube. It is proved by Boutet de Monvel-Sj\"ostrand \cite{BoSj} that
 $\Pi_{\tau}$ is a complex Fourier integral
operator, whose real canonical relation is the graph
$\Delta_{\Sigma_{\tau}}$  of the identity map on the symplectic cone
$\Sigma_{\epsilon} \subset T^* (\partial M_{\tau} )$ spanned
by the contact form $\alpha_{\tau}: = d^c \rho|_{(\partial
M_{\tau} )}$, i.e.
$$\Sigma_{\tau} = \{(x, \xi; r \alpha_{\epsilon}),\;\;\;(x, \xi) \in \partial M_{\tau} ),\; r > 0 \}
 \subset T^* (\partial M_{\tau} ).\;\; $$
This symplectic cone is  symplectically equivalent to $T^* M$ under the homogeneous map 
\begin{equation} \label{ISOM} \iota_{r} : T^*M - 0 \to
\Sigma_{\tau},\;\; \iota_{\epsilon} (x, \xi) = (x, \tau
\frac{\xi}{|\xi|}, |\xi| \alpha_{(x, \tau \frac{\xi}{|\xi|})}
).
\end{equation} 
The parametrix construction of \cite{BoSj}  for Szeg\"o kernels of strictly pseudo-convex domains applies to $\Pi_{\tau}$
and has the form
\begin{equation} \label{PITAU} \Pi_{\tau}(x,y ) = \int_0^{\infty} e^{i \theta \psi(x, y)} A(x, y, \theta) d \theta, \end{equation}
where $\psi(x, y) = \tau - i r_{\C}(x, y) $ and where $A$ is a classical symbol of order $2m - 1$
in $\theta$. Here,  we use that $i r_{\C}(x, y)$ is the analytic extension of
$\sqrt{\rho}(x)$.

In the  theory
of Fourier integral operators with complex phase of \cite{MSj}, the full complex canonical relation of $\Pi_{\tau}$
lies  in the (Cartesian square of the) complexification $\tMpt$ of $\partial M_{\tau}$. 
In general, the  canonical relations associated to the Schwartz kernels $K(x, y)$ of Fourier
integral operators with complex phase lie in the complexification
of the relevant cotangent bundles $T^*(X \times X')$. But the wave front set $WF(K)$ is contained
in the real points of the canonical relation.   
Since the symplectic geometry
in the real domain is simpler,  we choose to work in the framework  of {\it adapted
Fourier integral operators}  and Toeplitz operators of   \cite{BoGu} (Section A.2 of the Appendix; see Definition 2.7).  
In the theory of Toeplitz operators of \cite{BoGu},
 a  special symbol  calculus is defined for a sub-class of   Fourier integral operators with  complex phase known
as  Toeplitz operators or Hermite Fourier integral operators   (see \S 3 of \cite{BoGu} for the definition). The  Toeplitz calculus of \cite{BoGu} applies to the operators  in our problem, and
it is not  necessary for us to analytically continue to  the  complexification $(\partial M_{\tau})^{\C}$.  
  But for the sake
of completeness we review the  full complex canonical relations 
and symbols in \S \ref{CCRPITAU}.
 For background on general complex canonical relations and their real points, we refer to  \cite{MSj, MS} (see also \S \ref{APPENDIX}).

We briefly recall the definition of adapted Fourier integral operators. 
Let  $X, X'$ be  two real $C^{\infty}$ symplectic manifolds and let $\Sigma \subset T^* X, \Sigma' \subset T^* X'$
be two symplectic subcones.  Let $\chi: \Sigma \to \Sigma'$ be a homogeneous symplectic isomorphism,
i.e one that commutes with the $\R_+$ action on the cones.  Then a Fourier 
integral operator with complex phase from $X$ to $X'$ is said to be {\it adapted} to $\chi$ if its complex canonical
relation $C$ is positive, and  if its real part  $C_{\R}$ is the graph of $\chi$. It is called elliptic  if its symbol is nowhere
vanishing.  
In this language, $\Pi_{\tau}$ is a Fourier integral operator with  complex phase which is adapted to
the identity map $\chi = Id_{\Sigma_{\tau}}: \Sigma_{\tau} \to \Sigma_{\tau}. $ 

As discussed in  the Appendix of \cite{BoGu} (section (2.11)),  a Fourier integral operator $A$ with complex phase adapted to $\chi$ is a Hermite operator. The authors
define the 
symbol at any point of the graph of $\chi$ to be a half-density on the graph of $\chi$ tensored with  a linear operator 
\begin{equation} \label{SYMBOL} K_{\sigma_A} f(u) = \lambda \int e^{- q(u, v)} f(v) dv  \end{equation}
on $\scal(\R^n)$ where $\lambda \in \C$ and $q(u,v)$ is a quadratic form with positive real part.  
Here, $\R^n$ is the symplectic orthogonal complement in $T (T^* \partial M_{\tau})$ to $T \Sigma$.  It
is shown in \cite{BoGu} that the principal symbol of any \szego projector $\Pi_{\tau}$  is a rank one projector
onto the `ground state'  annihilated by the Lagrangian system associated to the $\dbar$ operator.  By comparison,
in the symbol calculus of \cite{MSj}, the symbol is a half-density on the complex canonical relation. It is computed
in \cite{BoSj}.

\section{\label{POISSONKERNEL} Poisson kernels and their analytic continuations  }

In this section we study  the Poisson operator $P^{\tau}$  \eqref{POISSON} and its composition
with the wave group $U^t = e^{i t \sqrt{\Delta}}$.   We begin by reviewing
the proof of the following

\begin{prop}  \label{BGa} Let $(M, g)$ be a real analytic compact Riemanian $m$-manifold. Then $P^{\tau} U^t$ is a  
Fourier integral operator with complex phase of order $- \frac{m-1}{4}$  associated
to the positive complex canonical  relation
\begin{equation} \label{GAMMADEF}  \gcal_{\tau, t} = \{(y, \eta, \zeta, p_{\zeta} \} \subset T^*\tilde{M} \times T^* \tMpt:
(\zeta, p_{\zeta}) \sim G^{i t + t}(y, \eta)\}
\end{equation}
where $\sim$ denotes the equivalence relation in \S \ref{S}.

\end{prop}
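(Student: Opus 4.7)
The plan is to realize $P^\tau U^t$ as a composition of two Fourier integral operators with complex phase whose canonical relations compose cleanly to yield $\gcal_{\tau,t}$, and then to apply the general composition theorem for FIOs with complex phase. First I would invoke the standard fact that the homogeneous wave group $U^t = e^{it\sqrt{\Delta}}$ is an FIO of order $0$ whose real canonical relation is the graph of the homogeneous geodesic flow $G^t$ on $T^*M - 0$. Second, I would treat $P^\tau$ via the Boutet de Monvel--Sj\"ostrand parametrix recalled in \S \ref{S}: since $P^\tau$ is the restriction to $\partial M_\tau$ of the analytic continuation of $e^{-\tau\sqrt{\Delta}}$ in the first slot, it can be realized as a Hermite/Toeplitz FIO with complex phase adapted to the homogeneous symplectic map $\iota_\tau\circ G^{i\tau}: T^*M - 0 \to \Sigma_\tau$, and by the symbol calculus of \cite{BoGu} it has order $-(m-1)/4$.

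The key step is then to compose. Because the real canonical relation of $U^t$ is a graph over $T^*M - 0$ and the canonical relation of $P^\tau$ then pushes forward along $\iota_\tau\circ G^{i\tau}$, the geometric composition is clean (in fact transverse) away from the zero section. Using Lemma \ref{pi}, which identifies $\pi G^{i\tau} G^t(x,\xi)$ with $\gamma_{x,\xi}^{\C}(t + i\tau)$, one sees that the composed relation is precisely $\gcal_{\tau,t}$, i.e.\ the graph of the complexified geodesic flow at complex time $t + i\tau$. In the complex FIO calculus of Melin--Sj\"ostrand, clean composition of two FIOs whose complex canonical relations are positive again produces an FIO with positive complex canonical relation, and the orders add: $0 + (-(m-1)/4) = -(m-1)/4$. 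Positivity of $\gcal_{\tau,t}$ follows from positivity of the BMS phase $\psi(x,y) = \tau - i r_{\C}(x,y)$ for $P^\tau$ (a consequence of $\tau>0$ and the strict plurisubharmonicity of $\sqrt{\rho}$), since $U^t$ is a real FIO and contributes no imaginary part to the phase.

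The main obstacle I anticipate is the symbol-level bookkeeping: one has to verify that right-composition with the standard homogeneous FIO $U^t$ preserves the Hermite/Toeplitz structure of $P^\tau$ in the sense of \cite{BoGu}, i.e.\ that the rank-one projector structure \eqref{SYMBOL} of the symbol of $\Pi_\tau$ onto the ground state of the associated Lagrangian system is transported covariantly by $G^t$. This is formally transparent because $G^t$ is a real homogeneous symplectomorphism of $T^*M - 0$ acting only on the source side of $P^\tau$, but it must be matched against the Melin--Sj\"ostrand composition formula to confirm that the composed complex phase remains of strictly positive type (not merely nonnegative). Once this compatibility is checked, the proposition follows directly from the composition theorem for FIOs with complex phase.
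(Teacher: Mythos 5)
Your strategy is to realize $P^\tau U^t$ as a composition of two FIOs and invoke the Melin--Sj\"ostrand composition theorem for positive complex canonical relations. The paper's own route is different: it does not obtain the complex canonical relation by composition, but by directly constructing a convergent Hadamard-type parametrix for the complexified wave kernel $U(t+i\tau,\zeta,y)$ (deferring the details to \cite{Z2}), reading off both the complex phase and the canonical relation $\gcal_{\tau,t}$ from that construction, and reserving the composition theorem of Boutet de Monvel--Guillemin for the later, ``adapted'' statement (Proposition \ref{REALPTAU}). So the decomposition $P^\tau U^t$ you propose is closer in spirit to what the paper does for Proposition \ref{REALPTAU} than to what it does for Proposition \ref{BGa}.

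There is also a genuine gap in the way you justify the second factor. You write that $P^\tau$ ``can be realized as a Hermite/Toeplitz FIO with complex phase'' via the Boutet de Monvel--Sj\"ostrand parametrix recalled in \S\ref{S}, and later you attribute to $P^\tau$ the phase $\psi(x,y)=\tau - i r_{\C}(x,y)$. But that parametrix and that phase belong to the \szego projector $\Pi_\tau$, whose kernel lives on $\partial M_\tau\times\partial M_\tau$; the kernel of $P^\tau$ lives on $\partial M_\tau\times M$ and is not given by the BMS construction. The relation $\Pi_\tau = P^\tau A_\tau^2 P^{\tau*}$ (Proposition \ref{PS}) in this paper is derived \emph{from} the FIO structure of $P^\tau$, not the other way around, so one cannot use it to factor the FIO structure of $\Pi_\tau$ back through $P^\tau$. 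Moreover, since $P^\tau = P^\tau U^0$, the $t=0$ case of Proposition \ref{BGa} is exactly the statement that $P^\tau$ is a complex FIO with canonical relation $\gcal_{\tau,0}$; your argument therefore assumes (a form of) the statement it is meant to prove for the Poisson operator itself. To make the composition route rigorous you would either need to cite the Hadamard-parametrix construction of \cite{Z2,St} for $P^\tau$ explicitly (as the paper does) rather than the BMS parametrix for $\Pi_\tau$, or else supply an independent parametrix for $P^\tau$ with a positive complex phase on $\partial M_\tau\times M$. Once that is in hand, the composition step, the transversality/cleanliness check, the positivity of the resulting relation, and the additivity of orders are all as you describe. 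A minor further point: the phrase ``adapted to $\iota_\tau\circ G^{i\tau}$'' double-counts, since Lemma \ref{GTAU} shows $\iota_\tau = G^{i\tau}$; the correct map is just $\iota_\tau$.
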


Here, $\dim_{\C} \tilde{M} = m, \dim_{\C} \tMpt = 2 m -1$ and so the canonical relation
has complex dimension $3m - 1$. The graph $\{(y, \eta, G^{i \tau + t}(y, \eta)\}$ has complex  dimension $2 m$.

As mentioned above, it is simpler to work in the framework of adapted Fourier
integral operators, since the symbols live on the real points of the canonical relation. 
 In the following Proposition, we use an extension of the notion of adapted Fourier integral operator in the sense of
\S \ref{S}.  
Namely, we allow the homogeneous symplectic map to be a symplectic embedding rather than
a symplectic isomorphism. All of the composition results of \cite{BoGu} extend readily to this case.

\begin{prop}  \label{BG} $P^{\tau}$ is a  
Fourier integral operator with complex phase of order $- \frac{m-1}{4}$  adapted to 
the isomorphism $\iota_{\tau}: T^* M - 0 \to \Sigma_{\tau}$   \eqref{ISOM}. 
Moreover, for any $s$,
$$P^{\tau}: W^s(M) \to {\mathcal O}^{s +
\frac{m-1}{4}}(\partial M_{\tau} )$$ is a continuous
isomorphism.
\end{prop}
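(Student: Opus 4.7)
The plan is to establish the FIO structure of $P^{\tau}$ by analytic continuation of the wave kernel and then deduce the Sobolev isomorphism property from the spectral decomposition. All of the hard analysis has been done in \cite{BoSj, BoGu, Z}; the task is to assemble the pieces in the Toeplitz / adapted-FIO framework.

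First, I would start from the Hadamard parametrix for the half-wave group $U^t = e^{it\sqrt{\Delta}}$, which is a classical FIO of order $0$ on $M\times M$ with real canonical relation given by the graph of the homogeneous geodesic flow $G^t$. The key observation, going back to \cite{BoSj} and used in \cite{Z}, is that when $t$ is rotated to $i\tau$ the phase function acquires a positive-definite imaginary part, so the Schwartz kernel of $U^{i\tau}$ extends holomorphically in the first argument to the Grauert tube $M_{\tau}$, and the restriction of this holomorphic extension to $\partial M_\tau$ is exactly $P^\tau$. Using Lemma \ref{pi}, the complexified flow $G^{t+i\tau}$ and the identification $E$ of Lemma \ref{EPSI} show that the underlying real canonical map is precisely the composition of $G^t$ with $\iota_\tau$; setting $t=0$ identifies the canonical relation of $P^\tau$ with the graph of $\iota_\tau \colon T^*M - 0 \to \Sigma_\tau$. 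This identification, together with ellipticity of the Hadamard amplitude, shows that $P^\tau$ is adapted (in the sense of \S\ref{S}) to $\iota_\tau$.

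Second, I would compute the order by a direct dimension/symbol count in the Toeplitz calculus of \cite{BoGu}. The wave FIO has order $0$; restricting one argument from $M$ (real dimension $m$) to a hypersurface of $M_\tau$ (real dimension $2m-1$) and projecting onto the CR directions introduces the standard Hermite shift of $-(m-1)/4$, exactly as the order of the Szeg\H{o} projector acquires its shift in \cite{BoSj,BoGu}. Alternatively, and perhaps more cleanly, one can verify the order by testing on eigenfunctions: $P^\tau \phi_j = e^{-\tau\lambda_j}\phi_j^{\C}$, and the sharp $L^2$ asymptotic
\[ \|\phi_j^{\C}\|_{L^2(\partial M_\tau)}^2 \sim \lambda_j^{(m-1)/2} e^{2\tau\lambda_j} \]
proved in \cite{Z} forces the Sobolev exponent shift to be $(m-1)/4$.

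Third, the continuous isomorphism property follows from this same spectral picture. Expanding $f=\sum c_j\phi_j$, Parseval on $L^2(\partial M_\tau)$ together with the above eigenfunction asymptotics gives
\[ \|P^\tau f\|_{W^{s+(m-1)/4}(\partial M_\tau)}^2 \asymp \sum_j \lambda_j^{2s} |c_j|^2 = \|f\|_{W^s(M)}^2, \]
after restricting to the CR-holomorphic subspace $\ocal^{s+(m-1)/4}(\partial M_\tau)$; injectivity is trivial and surjectivity is the statement that the boundary values $\{\phi_j^{\C}|_{\partial M_\tau}\}$ form a complete orthogonal basis for that space, which is a standard consequence of the BoSj parametrix.

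The main obstacle I anticipate is bookkeeping, not analysis: carefully matching conventions between the complex-phase FIO framework of \cite{MSj} (where the canonical relation is a complex Lagrangian in the complexified cotangent bundle) and the adapted-FIO / Toeplitz framework of \cite{BoGu} (where one works with the real symplectic cone $\Sigma_\tau$). In particular, justifying that the restriction to $\partial M_\tau$ of the holomorphic extension of the wave kernel lands in the Hermite class with symbol \eqref{SYMBOL} whose quadratic form $q$ has strictly positive real part on the symplectic complement to $\Sigma_\tau$ requires invoking the BoSj construction rather than reproducing it. Once this identification is made, the order computation and the spectral isomorphism argument are routine.
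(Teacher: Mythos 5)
Your overall route—analytically continue the Hadamard parametrix for $U^t$ to $t=i\tau$, read off the complex canonical relation as the graph of $G^{i\tau}$, then compute the order—is essentially the approach the paper takes; it likewise builds on the Hadamard-parametrix construction from \cite{Z2}. However, the step the paper itself singles out as ``one additional statement to justify the claim that $P^\tau$ is adapted to $\iota_\tau$'' is precisely Lemma \ref{GTAU}: one must show that the real points of the complex canonical relation land on $\Gamma_\tau$, i.e.\ that $\iota_\tau(y,\eta)=G^{i\tau}(y,\eta)$, including the identification of the covector component with the contact form $\alpha_\tau=d^c\sqrt\rho$. You dispatch this by citing Lemmas \ref{pi} and \ref{EPSI}, but those give only configuration-space projections and the symplectic/K\"ahler identifications under $E$; they do not by themselves identify the cotangent lift $g_\C\,\tfrac{d}{d\tau}\gamma_{x,\xi}(i\tau)$ with $\alpha_{\gamma_{x,\xi}(i\tau)}$, which is the nontrivial content of Lemma \ref{GTAU}'s proof. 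To your credit, you explicitly flag this ``matching of conventions'' between \cite{MSj} and \cite{BoGu} as the anticipated obstacle—that is exactly where the gap is—but in the proposal it is acknowledged rather than filled.

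Two computational points also need correction. First, the sharp $L^2$ asymptotic should read
$\|\phi_j^{\C}\|_{L^2(\partial M_\tau)}^2\sim\lambda_j^{-\frac{m-1}{2}}e^{2\tau\lambda_j}$,
with a \emph{negative} power of $\lambda_j$; since $P^\tau\phi_j=e^{-\tau\lambda_j}\phi_j^\C|_{\partial M_\tau}$ and $(P^{\tau*}P^\tau)$ is pseudodifferential of order $-\frac{m-1}{2}$, one gets $\|P^\tau\phi_j\|_{L^2}^2\sim\lambda_j^{-\frac{m-1}{2}}$. With your sign the order of $P^\tau$ would come out as $+\frac{m-1}{4}$ and the map would lose regularity, contradicting the target Sobolev shift; your final display happens to be correct only because the error cancels against itself. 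Second, the family $\{\phi_j^\C|_{\partial M_\tau}\}$ is \emph{not} orthogonal in $L^2(\partial M_\tau)$—the paper states this explicitly after Corollary \ref{ONB}, noting that the orthonormal system is $\{P^\tau A_\tau\phi_j\}$ and that the Gram matrix of $\{u_j^\tau\}$ is only $I+K_\tau$ with $K_\tau$ compact. The Parseval-type estimate you invoke therefore needs the operator $A_\tau$ (equivalently, the near-orthonormality of $u_j^\tau$) rather than orthogonality of the raw restrictions; once phrased that way, the isomorphism argument goes through and is essentially the paper's Corollary \ref{ONB} together with Proposition \ref{UNIT}.
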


This Proposition is proved  in \cite{Z2, St}. A somewhat less precise statement  is given in \cite{Bou} and
in    Theorem
3.2 of \cite{GS2},
but the proof is hardly indicated there.  Since the statement and proof of Proposition \ref{BG}  differ somewhat 
from the previous versions, we review
its proof. We also need the following extension: 
\begin{prop}

 \label{REALPTAU}$P^{\tau} \circ U^t$ is a Fourier integral operator with complex phase  of order 
 $- \frac{m-1}{4}$ adapted to the symplectic isomorphism 
\begin{equation} \label{chitDEF} \chi_{\tau, t}  (y, \eta) = \iota_{\tau} (G^t(y, \eta), y, \eta ) : T^*M - 0 \to
\Sigma_{\tau}.
\end{equation}
Equivalently, $P^{\tau} \circ U^t$ is a   Fourier integral operator of Hermite type
of order 
 $- \frac{m-1}{4}$  associated
to the  canonical relation
\begin{equation} \label{GAMMAtDEF} \Gamma_{\tau, t} = \{(\iota_{\tau} (G^t(y, \eta), y, \eta ) \} \subset  \Sigma_{\tau}
\times T^* M. 
\end{equation}
\end{prop}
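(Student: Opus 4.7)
The plan is to recognize $P^{\tau} \circ U^t$ as the holomorphic continuation of the Schwartz kernel of the wave group to complex time $t + i\tau$ in the first argument, and then combine Proposition \ref{BG} with the classical Hörmander calculus for $U^t$. Spectrally, $P^{\tau} U^t(\zeta, y) = \sum_j e^{(it - \tau)\lambda_j} \phi_j^{\C}(\zeta) \phi_j(y)$, which exhibits the composition as the analytic continuation of $U^{t+i\tau}(x,y)$ in $x$, restricted to $\partial M_\tau$. So the desired statement is exactly that analytic continuation of the wave kernel in the base variable produces an adapted Hermite FIO associated to the complexified flow $G^{t+i\tau}$.

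The two building blocks are: (i) $U^t = e^{it\sqrt{\Delta}}$ is a classical unitary Fourier integral operator of order $0$ whose real canonical relation is the graph of the homogeneous geodesic flow $G^t$ on $T^*M - 0$; and (ii) by Proposition \ref{BG}, $P^{\tau}$ is a complex-phase Hermite (Toeplitz) Fourier integral operator of order $-\frac{m-1}{4}$ adapted to the symplectic embedding $\iota_\tau : T^*M - 0 \to \Sigma_\tau$ of \eqref{ISOM}. I would then apply the composition calculus of \cite{BoGu}, extended to adapted maps that are symplectic embeddings rather than isomorphisms, as noted just before the statement. Composing a Hermite FIO adapted to $\iota$ with a classical FIO associated to the graph of a homogeneous canonical transformation $\chi$ produces a Hermite FIO adapted to $\iota \circ \chi$. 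Orders add: $0 + (-\frac{m-1}{4}) = -\frac{m-1}{4}$. The resulting symplectic map is $\iota_\tau \circ G^t = \chi_{\tau,t}$, and the associated Hermite canonical relation is exactly $\Gamma_{\tau,t}$ in \eqref{GAMMAtDEF}.

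The main obstacle is the clean/transversal composition condition needed to invoke this abstract composition theorem. Transversality should hold automatically here because $G^t$ is a homogeneous diffeomorphism of $T^*M - 0$, so its graph meets the ``input'' copy of $T^*M$ in the canonical relation of $P^\tau$ transversally; still, one must verify that no excess is produced and that the positivity of the complex canonical relation is preserved. A concrete alternative that I would fall back on is to construct the parametrix directly: combine the Boutet de Monvel-Sjöstrand oscillatory representation \eqref{PITAU} (using $P^\tau = \Pi_\tau P^\tau$) with the Hadamard parametrix for $U^t$, and apply complex stationary phase in the intermediate $M$-variable. The critical set is cut out by the graph of $G^t$, the resulting phase is the analytic continuation of $\tau - i r_{\C}(\cdot,\cdot)$ composed with $G^t$, and the amplitude/order counts give the symbol class and the order $-\frac{m-1}{4}$. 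This route makes both the canonical relation $\Gamma_{\tau,t}$ and the adapted symplectic map $\chi_{\tau,t}$ transparent, and avoids any subtlety in invoking the abstract Hermite composition theorem for embeddings.
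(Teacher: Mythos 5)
Your main route---compose the Hermite FIO $P^{\tau}$ adapted to $\iota_\tau$ (Proposition \ref{BG}) with the classical FIO $U^t$ associated to the graph of $G^t$, invoke the Boutet de Monvel--Guillemin composition theorem for Hermite operators, and add the orders---is exactly the paper's proof, which cites Theorems 3.4 and 7.5 of \cite{BoGu} for precisely this composition law. The parametrix fallback you sketch is a reasonable sanity check but is not needed; the one-line composition argument is what the paper uses.
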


\begin{proof} This follows immediately from Proposition \ref{BG} and from the fact  proved in \cite{BoGu}, Theorems 3.4 and 7.5,  that the compositon of a Fourier integral operator
and a Fourier integral operator of Hermite type is also a Fourier integral operator of Hermite type, with a certain
addition law for the orders and a composition law for the symbols. \end{proof}

The proofs of Proposition \ref{BG}-\ref{REALPTAU} can be extracted from Hadamard's classical construction
of a convergent parametrix  for the Schwartz kernels of the the operators $\cos t \sqrt{\Delta}$ and
$\frac{\sin t \sqrt{\Delta}}{\sqrt{\Delta}}$ in a small neighborhood of the characteristic conoid. Hadamard
did not consider the operator   $\exp (i t \sqrt{\Delta})$ since $\sqrt{\Delta}$ was unknown at that time, but
in \cite{Z2} we modify the Hadamard parametrix construction
to construct the Schwartz kernel $U(t, x, y)$  of $U^t$ 
as a convergent Riesz series expansion near the characteristic conoid. Further it is shown that 
$U(i \tau, x, y)$ admits an analytic continuation in $x$ when $(i\tau, x, y)$ lies in a small enough neighborhood
of the complex characteristic conoid. Outside of this neighborhood $U(i \tau, x, y)$ is real analytic. 
It follows from the construction  that  $U(i \tau)$ is a Fourier integral operator of complex type, and its canonical relation
is the  graph of the imaginary time geodesic flow $G^{i \tau}$; we refer to \cite{Z2} for the details.

However,  we need   one additional statement to justify the claim that $P^{\tau}$ is adapted to $\iota_{\tau}$. The canonical
relation $\gcal_{\tau, t}$  of Proposition \ref{BG} is the holomorphic extension of $G^t$.  We need to see that the intersection
of this graph with $\Sigma_{\tau} \times T^* M$ is  $\Gamma_{\tau, t}. $  The proof involves the same identifications
as in \S \ref{HAMFLOWREL}. 

We recall that $G^t: T^*M - 0 \to T^*M - 0$ is the homogeneous geodesic flow, i.e.
$G^t(x, \xi) = |\xi| G^t(x, \frac{\xi}{|\xi|})$. The analytic continuation in $t$ is also homogeneous, so we have
\begin{equation} \label{HOM} G^{i \tau} (x, \xi) = |\xi| G^t{i \tau} (x, \frac{\xi}{|\xi|}). \end{equation}

\begin{lem}   \label{GTAU} We have: $\iota_{\tau}(y, \eta) = G^{i \tau}(y, \eta). $ Thus, $G^{i \tau}$ gives a homogeneous
symplectic isomorphism
$$G^{i \tau}: T^* M - 0  \to \Sigma_{\tau} - 0. $$
\end{lem}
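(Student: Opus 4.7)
The plan is to verify the identity $\iota_{\tau}(y, \eta) = G^{i \tau}(y, \eta)$ by matching base points and covector parts separately, after first reducing to the case of unit vectors by homogeneity. Both maps respect the $\R_+$-action on fibers: $\iota_\tau$ is homogeneous by its very definition \eqref{ISOM}, and the homogeneous geodesic flow $G^t$ satisfies $G^t(y, r\eta) = r \cdot G^t(y, \eta)$ (multiplication in the fiber), a property preserved under analytic continuation in $t$, giving $G^{i\tau}(y, r\eta) = r \cdot G^{i\tau}(y, \eta)$. Hence it suffices to verify the equality on $S^*M$.

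For the footpoint, I would apply $\pi$ to both sides. By Lemma \ref{pi}, $\pi G^{i \tau}(y, \eta) = \gamma_{y, \eta}(i \tau)$ for $(y,\eta) \in S^*M$. By the identity \eqref{MOREINT} obtained in the proof of Proposition \ref{IMPID}, namely $E(G^t(y, \eta)) = \gamma_{y,\eta}(t + i|\eta|)$, specialized to $t = 0$ and $|\eta| = 1$ (with the appropriate rescaling in $\eta$), one has $\gamma_{y, \eta}(i\tau) = E(y, \tau \eta)$. This matches the footpoint $E(y, \tau \eta / |\eta|) = E(y, \tau\eta)$ of $\iota_{\tau}(y, \eta)$ for unit $\eta$, so base points agree.

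For the covector part, the key is to transport the comparison to $B^*_\epsilon M$ via the symplectic diffeomorphism $E$ of Lemma \ref{EPSI}, under which $E^*\alpha = d^c \sqrt{\rho}$ and consequently $E^*(d^c\rho) = 2\sqrt{\rho}\, d^c\sqrt{\rho}$, restricting on $\partial M_\tau$ to $2\tau$ times the pullback of the canonical 1-form on $S^*_\tau M$. Thus the cone $\Sigma_\tau$ generated by $\alpha_\tau = d^c\rho|_{\partial M_\tau}$ corresponds under $E$ to the cone generated by the canonical 1-form on $S^*_\tau M \subset B^*_\epsilon M$, and the map $\iota_\tau$ is, up to this conformal identification, just the identity. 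By Lemma \ref{HAMID}, $E$ intertwines $G^t$ with $\exp t\Xi_{\sqrt{\rho}}$, and the orbits of $\exp t\Xi_{\sqrt{\rho}}$ lie along the null foliation of $\partial M_\tau$; analytic continuation in time then shows that the covector component of $G^{i\tau}(y,\eta)$, when restricted to $T(\partial M_\tau)$, is precisely $|\eta|\alpha_\tau$ at the base point.

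The main obstacle will be making rigorous the identification of $G^{i \tau}(y, \eta)$, which a priori lives in the complex cotangent space $T^* M_{\C}$, with an element of the real symplectic cone $\Sigma_\tau \subset T^*(\partial M_\tau)$. This requires pulling back the complex covector by the inclusion $\partial M_\tau \hookrightarrow M_\C$ and verifying that the result is real-valued and proportional to $\alpha_\tau$. The reality follows ultimately from the fact that, when $(y, \eta)$ is real, the imaginary-time flow $G^{i\tau}$ is precisely the mechanism (via $E$) that embeds the real cosphere bundle into the complex characteristic variety of $\ddbar \rho$; this is the content that makes $G^{i\tau}$ a bijection onto $\Sigma_\tau - 0$ rather than landing in a larger complex cone, and it is the only genuinely non-formal step in the proof.
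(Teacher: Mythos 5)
Your reduction to $S^*M$ by homogeneity and your verification of base points are both correct and follow the same lines as the paper. The paper indeed invokes Lemma \ref{pi} and the identity \eqref{MOREINT} (equivalently \eqref{EXPFORM}) to conclude $\pi G^{i\tau}(y,\eta) = \gamma_{y,\eta}(i\tau) = E(y,\tau\eta)$ for unit $\eta$.

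The gap is in the covector component, which you correctly identify as ``the only genuinely non-formal step'' but then do not actually bridge. Your argument cites the flow intertwining $E \circ G^t = \exp t\Xi_{\sqrt{\rho}}\circ E$ from Lemma \ref{HAMID}, but this is a statement about the base-point dynamics (a map $T^*M \to M_\epsilon$), not about the covector component of $G^{i\tau}$, which is a complex covector in $T^*M_{\C}$. The paper supplies the missing computation: writing $G^{i\tau}(x,\xi) = (\gamma_{x,\xi}(i\tau),\, g_{\C}\, \tfrac{d}{d\tau}\gamma_{x,\xi}(i\tau))$ by analytically continuing the cotangent-lift formula \eqref{LIFT}, one must then verify \eqref{CLAIMb}, namely that $g_{\C}(\tfrac{d}{d\tau}\gamma_{x,\xi}(i\tau),\,\cdot\,)$, pulled back to $T_{\gamma_{x,\xi}(i\tau)}\partial M_\tau$, equals $\alpha_\tau$. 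This is done by testing the candidate covector on the two pieces of $T\partial M_\tau$: it evaluates to $1$ on $T = \tfrac{d}{dt}\gamma_{x,\xi}(t+i\tau)|_{t=0}$ (since $\tfrac{d}{d\tau}\gamma_{x,\xi}(i\tau) = JT$ and $T,JT$ are paired by $\omega$), and to $0$ on the CR sub-bundle, because that sub-bundle is $J$-invariant and $\omega$-orthogonal to $T$, hence also to $JT$; alternatively, a direct coordinate computation with Hamilton's equations \eqref{XI} gives $g_{\C}\tfrac{d}{d\tau}\gamma = \sum_k p_k\, d\zeta_k = \alpha$. Your closing sentence, asserting reality and proportionality to $\alpha_\tau$ because $G^{i\tau}$ ``is precisely the mechanism that embeds the real cosphere bundle into the complex characteristic variety,'' is circular --- it restates the conclusion rather than proving it. To complete the proof you need a concrete formula for the covector part of $G^{i\tau}(y,\eta)$ and a test against $T\partial M_\tau$, exactly as the paper does.
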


\begin{proof}  
By homogeneity we may assume 
that $|\xi| = 1$. We then want to show that
$$G^{i \tau} (x, \xi) =  (\exp_x i \tau \xi,  \alpha_{\tau}) $$ where $\alpha_{\tau}$ is the canonical
one form of $T_{\exp_x i \tau \xi}^* \partial M_{\tau}$. Note that  $G^t(x, \xi) = g^t(x, \xi)$ for $|\xi| = 1$
 and we can analytically
continue $g^t(x, \xi)$ in $t$.
By definition, for $(x, \xi) \in S^* M \subset T^*M_{\C}$, 
$$g^{i \tau}(x, \xi) \in T^*_{ \gamma_{x,\xi}(i \tau)  } M_{\tau} $$
is defined by extending the phase space  Hamilton orbit $\tilde{\gamma}_{x, \xi}: \R \to T^* M$
 holomorphically in time as an orbit of flow of the holomorphic Hamiltonian
\eqref{HC} and for fixed $(x, \xi)$ is is   a holomorphic
strip in $T^* M_{\C}$.  
We then restrict it to the imaginary time axis   
 to obtain a real curve
\begin{equation} \tilde{\gamma}_{x, \xi} (i \tau): [0, \epsilon_0) \to T^* M_{\C}. \end{equation}
If $\pi_{\C}: T^* M_{\C} \to M_{\C}$ denotes the natural projection, then
$ \pi_{\C} (\tilde{\gamma}_{x, \xi}(i \tau)) = \gamma_{x, \xi} (i \tau)$, so     the only  non-obvious aspect is
the $p_{\zeta}$ component. But  $\tilde{\gamma}_{x, \xi}(i \tau)$ is the cotangent
lift of $ \gamma_{x, \xi} (i \tau)$, hence is given by 
\begin{equation}\label{LIFT} G^{i \tau}(x, \xi) = ( \gamma_{x, \xi} (i \tau), g_{\C} \frac{d}{d \tau}   \gamma_{x, \xi} (i \tau)) \end{equation}
where
 $g_{\C}: T M_{\C} \to T^* M_{\C}$ is  the linear map defined by the analytic continuation \eqref{gC}  of the Riemannian metric to a holomorphic symmetric two tensor on $T M_{\C}$. 
The formula \eqref{LIFT} is the analytic continuation of the lifting formula in the real domain. It remains to prove that
\begin{equation} \label{CLAIM} ( \gamma_{x, \xi} (i \tau), g_{\C} \frac{d}{d \tau}   \gamma_{x, \xi} (i \tau)) 
= \alpha_{  \gamma_{x, \xi} (i \tau)) }. \end{equation}
We recall that
$\frac{d}{d \tau}  \gamma_{x, \xi} (i \tau)  \in T_{\gamma_{x, \xi}(i \tau)} M_{\C}$ and that $\gamma_{x, \xi}(i \tau) \in
\partial M_{\tau}$. The curve $t \to \gamma_{x, \xi}(t + i \tau)$ is characteristic for the form  $\omega$ restricted
to $\partial M_{\tau}$, and so $\frac{d}{dt} \gamma_{x, \xi}(t + i \tau)$ spans the null space of the form. Note that
$\frac{d}{d \tau} \gamma_{x, \xi}(i \tau) = i \frac{d}{dt} \gamma_{x, \xi}(t + i \tau) |_{t = 0}$ and
that  $T := \frac{d}{dt} \gamma_{x, \xi}(t + i \tau)|_{t = 0}$ and $ \frac{d}{d \tau} \gamma_{x, \xi}(i \tau) 
= J T$ are symplectically paired by  $\omega_{\gamma_{x, \xi}(i \tau)}.$

 We now dualize to $T^* \partial M_{\tau}$  using the metric. We let $\alpha_{\tau}$
denote the canonical 1-form of $T^* \partial M_{\tau}$. Then $\alpha_{\tau} (T) = 1$
and $\ker \alpha_{\tau}$ is the CR sub-bundle of $T \partial M_{\tau}$. 
 The claim \eqref{CLAIM} is equivalent to
\begin{equation} \label{CLAIMb} g_{\C}(\frac{d}{d \tau} \gamma_{x, \xi} (i \tau),  \cdot)  =
\omega(J T, \cdot) = 
\alpha_{\gamma_{x, \xi}(i \tau)} (\cdot) \in T^*_{\gamma_{x, \xi}(i \tau)} M_{\tau}. \end{equation}
Indeed, we note that we get $1$ if we evaluate both sides on $T$, and so it suffices to
check that we get $0$ on the CR sub-bundle. But the CR sub-bundle is $J$-invariant
and symplectically orthogonal to $T$, and therefore also to $J T$. 

Alternatively, we compute using   Hamilton's equations \eqref{XI}. 
If $G^{i \tau}(x, \xi) = (\zeta, p)$, we have
$$\begin{array}{lll} g_{\C} \frac{d}{d \tau} \gamma_{x,\xi}(i \tau)  &= & g_{\C} (\sum_{i, j} g^{ij}( \gamma_{x, \xi}(i \tau))
p_i \frac{\partial}{\partial \zeta_j} \\ &&\\
& = &  (\sum_{i, j, k} g^{ij}( \gamma_{x, \xi}(i \tau))
p_i  g_{j k}( \gamma_{x, \xi}(i \tau)) d \zeta_k \\ &&\\
& = & \sum_k  p_k d \zeta_k = \alpha_{( \gamma_{x, \xi}(i \tau))}. \end{array} $$
Here, we also use that
$\alpha_{\C} |_{T \Sigma} = \alpha_{\Sigma}. $
In other words, if we restrict the holomorphic canonical one form to $T \Sigma$, then it agrees with the
restriction of the canonical one form of $T^*(\partial M_{\tau})$ to $T \Sigma$. But this is obvious since
$\alpha_{\C}$ is the analytic continuation of the canonical 1-form of $T^* \partial M_{\tau}$.



\end{proof}


\subsection{\label{ATAUsect} Poisson kernel and \szego kernel}

Define the self-adjoint operator on $L^2(M, dV)$ by 
\begin{equation} \label{ATAU} A_{\tau} = (P^{\tau *} P^{\tau})^{-\half}. \end{equation}
By Lemma 3.1 of  \cite{Z}  $A_{\tau}$   is an elliptic  self-adjoint  pseudo-differential operator of order $ \frac{m-1}{4}$
with principal symbol $|\xi|^{ \frac{m-1}{4}}$. That is, $(P^{\tau *} P^{\tau}$ is a pseudo-differential
operator of order $-\frac{m-1}{2}$ with principal symbol $|\xi|^{- \frac{m-1}{2}}$.

\begin{prop} \label{PS} The \szego kernel $\Pi_{\tau}$ is related to the Poisson kernel $P^{\tau}$ by 
$$\Pi_{\tau} = P^{\tau} A_{\tau}^2  P^{\tau *}. $$
\end{prop}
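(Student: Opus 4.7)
The plan is to observe that the identity $\Pi_\tau = P^\tau A_\tau^2 P^{\tau*} = P^\tau (P^{\tau*}P^\tau)^{-1} P^{\tau*}$ asserts nothing more than that the right-hand side is the orthogonal projection from $L^2(\partial M_\tau)$ onto $\ocal^0(\partial M_\tau)$. Since orthogonal projections onto a closed subspace are unique, it then suffices to verify three properties of the operator $Q_\tau := P^\tau (P^{\tau*}P^\tau)^{-1} P^{\tau*}$.

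First I would check that $A_\tau^2$ is well-defined. By Lemma 3.1 of \cite{Z} (cited above), $P^{\tau*}P^\tau$ is an elliptic self-adjoint pseudodifferential operator of order $-(m-1)/2$ on $M$; moreover it is injective because $P^\tau f = 0$ forces $\langle f, \phi_j\rangle = 0$ for every $j$ (the exponentials $e^{-\tau \lambda_j}$ are nonzero and the $\phi_j^{\C}$ are linearly independent), so $P^{\tau*}P^\tau$ is positive and invertible as an unbounded operator with dense range in the appropriate Sobolev scale. Therefore $A_\tau = (P^{\tau*}P^\tau)^{-1/2}$ exists and $Q_\tau$ is well-defined.

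Next I would verify that $Q_\tau$ is an orthogonal projection. Self-adjointness is immediate since $(P^{\tau*}P^\tau)^{-1}$ is self-adjoint. Idempotence is the one-line computation
\[
Q_\tau^2 = P^\tau (P^{\tau*}P^\tau)^{-1}\bigl(P^{\tau*}P^\tau\bigr)(P^{\tau*}P^\tau)^{-1} P^{\tau*} = P^\tau (P^{\tau*}P^\tau)^{-1} P^{\tau*} = Q_\tau.
\]
Finally, I would identify the range of $Q_\tau$ with $\ocal^0(\partial M_\tau)$. The inclusion $\mathrm{Ran}(Q_\tau) \subseteq \mathrm{Ran}(P^\tau) = \ocal^0(\partial M_\tau)$ is clear from Proposition \ref{BG}, which says $P^\tau$ is an isomorphism onto $\ocal^{(m-1)/4}(\partial M_\tau)$. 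For the reverse inclusion, any $u \in \mathrm{Ran}(P^\tau)$ is of the form $u = P^\tau g$, and then
\[
Q_\tau u = P^\tau (P^{\tau*}P^\tau)^{-1} P^{\tau*} P^\tau g = P^\tau g = u,
\]
so $Q_\tau$ restricts to the identity on $\ocal^0(\partial M_\tau)$. Combined with self-adjoint idempotence, this identifies $Q_\tau = \Pi_\tau$.

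The main obstacle is purely bookkeeping rather than conceptual: one must make sure that the composition $P^\tau (P^{\tau*}P^\tau)^{-1} P^{\tau*}$ is taken on compatible Sobolev spaces so that the algebraic manipulations are rigorous, using the isomorphism statement of Proposition \ref{BG} and the ellipticity of $P^{\tau*}P^\tau$. No microlocal symbol computation is needed for the statement itself—the symbol/Fourier-integral nature of both sides is a consistency check, namely that a Toeplitz-type conjugation of a pseudodifferential operator of order $(m-1)/2$ (namely $A_\tau^2$ has order $(m-1)/2$) by the Hermite operators $P^\tau, P^{\tau*}$ produces the Szegő projector, which sits in exactly the right Fourier integral class by the composition rules of \cite{BoGu}.
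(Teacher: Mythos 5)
Your proof is correct and takes essentially the same route as the paper: both verify self-adjointness, the one-line idempotence calculation, and the identification of the range with $\ocal^0(\partial M_\tau)$, then conclude by the characterization of orthogonal projections (the paper phrases the last step as the operator annihilating the orthogonal complement, you phrase it as uniqueness of the orthogonal projection onto a fixed closed subspace---these are interchangeable). Your added remarks on the invertibility of $P^{\tau*}P^\tau$ and the Sobolev bookkeeping are sensible elaborations that the paper leaves implicit by citing Lemma~3.1 of \cite{Z}.
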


\begin{proof} Let  $\tilde{\Pi}_{\tau}$ denote
the operator in the statement.  It is easily verified that 
$$\tilde{\Pi}_{\tau}^* = \tilde{\Pi}_{\tau}, \;\;\;\;\; \tilde{\Pi}_{\tau} \tilde{\Pi}_{\tau} = \tilde{\Pi}_{\tau}. $$
To check the idempotent property we note that
$$ P^{\tau} \left(P^{\tau *} P^{\tau} \right)^{-1} P^{\tau *}  P^{\tau} \left(P^{\tau *} P^{\tau} \right)^{-1} P^{\tau *}
=  P^{\tau} \left(P^{\tau *} P^{\tau} \right)^{-1} P^{\tau *}.$$

Further the range of $\tilde{\Pi}_{\tau}$ equals $\ocal^0(\partial M_{\tau})$. Thus it is a projection
onto that space. Moreover it is the orthogonal projection since it annihlates any $f \bot \ocal^0(\partial
M_{\tau})$: Indeed,  $\ocal^0(\partial M_{\tau})$  is the
image  of $P^{\tau}$  on $L^2(M)$ and 
$P_{\tau}^* f = 0$ if $f \bot \ocal^0(\partial M_{\tau})$. 

\end{proof}

\begin{prop} \label{UNIT} The operator $ P^{\tau} A_{\tau}$ is a unitary operator: $L^2(M) \to \ocal^0(\partial M_{\tau})$
with left inverse $A_{\tau} P^{\tau *}  $.  Moreover,    $ A_{\tau}^2 P^{\tau *} : \ocal^0(\partial M_{\tau}) \to L^2(M)$ is a left inverse to
$P_{\tau}$. \end{prop}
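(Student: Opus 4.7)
The plan is to exploit the definition $A_\tau^2 = (P^{\tau*} P^\tau)^{-1}$ to reduce everything to direct algebraic manipulation, and then to use the mapping property from Proposition \ref{BG} to identify the range of $P^\tau A_\tau$ with $\ocal^0(\partial M_\tau)$.

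First I would verify the unitarity of $P^\tau A_\tau \colon L^2(M) \to \ocal^0(\partial M_\tau)$ by checking isometry and surjectivity separately. For isometry, since $A_\tau$ is self-adjoint,
\[
\langle P^\tau A_\tau f, P^\tau A_\tau f \rangle_{\partial M_\tau} = \langle A_\tau (P^{\tau*} P^\tau) A_\tau f, f \rangle_{M} = \langle A_\tau \cdot A_\tau^{-2} \cdot A_\tau f, f \rangle_M = \|f\|^2_{L^2(M)}.
\]
For surjectivity, I would invoke Proposition \ref{BG}, which gives that $P^\tau \colon W^{-(m-1)/4}(M) \to \ocal^0(\partial M_\tau)$ is a continuous isomorphism, together with the ellipticity of $A_\tau$ as a self-adjoint positive pseudodifferential operator of order $(m-1)/4$: this makes $A_\tau \colon L^2(M) \to W^{-(m-1)/4}(M)$ a continuous isomorphism. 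Composing, $P^\tau A_\tau \colon L^2(M) \to \ocal^0(\partial M_\tau)$ is a bounded isomorphism, and combined with the isometry above it is unitary.

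Next, for the left inverse claim, I would simply compute
\[
(A_\tau P^{\tau*})(P^\tau A_\tau) = A_\tau (P^{\tau*} P^\tau) A_\tau = A_\tau \cdot A_\tau^{-2} \cdot A_\tau = I_{L^2(M)}.
\]
Since $P^\tau A_\tau$ is unitary, this identity is of course automatic (its adjoint is its inverse, and $(P^\tau A_\tau)^* = A_\tau P^{\tau*}$ by self-adjointness of $A_\tau$), but writing it out makes the left-inverse assertion transparent without needing unitarity.

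Finally, for the second part, the same kind of calculation gives
\[
(A_\tau^2 P^{\tau*}) P^\tau = A_\tau^2 (P^{\tau*} P^\tau) = (P^{\tau*} P^\tau)^{-1}(P^{\tau*} P^\tau) = I_{L^2(M)},
\]
so $A_\tau^2 P^{\tau*}$ is a left inverse of $P^\tau$. There is no serious obstacle in any of this: the only substantive input is Proposition \ref{BG} (to know $P^\tau$ is an isomorphism onto $\ocal^0$ in the appropriate Sobolev scale) and the ellipticity/invertibility of $A_\tau$ from Lemma 3.1 of \cite{Z}; the rest is bookkeeping with the definition $A_\tau = (P^{\tau*} P^\tau)^{-1/2}$. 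The only point that deserves care is making sure the invertibility of $A_\tau$ on $L^2(M)$ is justified—this follows from the fact that $P^{\tau*} P^\tau$ is a positive, elliptic, self-adjoint pseudodifferential operator, hence has strictly positive spectrum and a well-defined positive square-root inverse.
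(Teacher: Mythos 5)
Your argument is correct, and the algebraic half (isometry and the two left-inverse identities, all flowing from $A_\tau^2 = (P^{\tau*}P^\tau)^{-1}$ together with the fact that $A_\tau$ commutes with $P^{\tau*}P^\tau$) matches the paper's computation line by line. Where you diverge is in how surjectivity of $P^\tau A_\tau$ onto $\ocal^0(\partial M_\tau)$ is obtained. The paper gets it from Proposition~\ref{PS}: since $P^\tau A_\tau^2 P^{\tau*} = \Pi_\tau$ and $\Pi_\tau$ is the orthogonal projection onto $\ocal^0(\partial M_\tau)$, the range of $P^\tau A_\tau$ already contains the range of $\Pi_\tau$, and an isometry with full range is unitary. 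You instead invoke the Sobolev-isomorphism statement in Proposition~\ref{BG} (that $P^\tau\colon W^{-(m-1)/4}(M) \to \ocal^0(\partial M_\tau)$ is a continuous isomorphism) together with ellipticity and positivity of $A_\tau$ to see that $A_\tau\colon L^2(M)\to W^{-(m-1)/4}(M)$ is an isomorphism, and then compose. Both routes are legitimate; the paper's is more self-contained (it only uses the Szeg\H{o}-kernel identity, which is proved two lines earlier), while yours makes explicit the Sobolev bookkeeping and shows directly how $A_\tau$ corrects the order of $P^\tau$. Nothing is missing in either version, and yours is arguably a clearer explanation for a reader who wants to see exactly where the range condition comes from.
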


\begin{proof} It suffices to show that
$$\left\{ \begin{array}{l} A_{\tau} P^{\tau *} P^{\tau } A_{\tau} = A_{\tau}^2 P^{\tau *} P^{\tau}= Id: L^2(M) \to L^2(M),\\ \\
P^{\tau} A_{\tau}^2 P^{\tau *}  = \Pi_{\tau}: \ocal^0(\partial M_{\tau}) \to \ocal(\partial M_{\tau}) \end{array} \right. $$
The identities are obvious (see Proposition \ref{PS}).
\end{proof}

We note that the CR functions $\{u_j^{\tau}\}$ are not orthogonal. However, it follows from Proposition \ref{UNIT} that
\begin{cor} \label{ONB}  The CR functions $P^{\tau} A_{\tau} \phi_j$ form an orthonormal basis of $H^2(\partial M_{\tau}, d\mu_{\tau})$.
\end{cor}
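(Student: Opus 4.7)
The plan is to deduce this directly from Proposition \ref{UNIT} together with the fact that unitary maps send orthonormal bases to orthonormal bases. First I would note that $\{\phi_j\}$ is by hypothesis an orthonormal basis of $L^2(M, dV_g)$. By Proposition \ref{UNIT}, the operator $P^{\tau} A_{\tau}$ is a unitary isomorphism from $L^2(M)$ onto $\ocal^0(\partial M_{\tau})$, equipped with the $L^2$ inner product induced by the Liouville measure $d\mu_{\tau}$ on $\partial M_{\tau}$. Since $\ocal^0(\partial M_{\tau})$ is by definition the Hardy space $H^2(\partial M_{\tau}, d\mu_{\tau})$ of boundary values of holomorphic functions in $M_{\tau}$ lying in $L^2(\partial M_{\tau})$, this identifies the target with $H^2(\partial M_{\tau}, d\mu_{\tau})$.

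Next I would invoke the elementary Hilbert space fact that if $U: \hcal_1 \to \hcal_2$ is a unitary isomorphism of Hilbert spaces and $\{e_j\}$ is an orthonormal basis of $\hcal_1$, then $\{U e_j\}$ is an orthonormal basis of $\hcal_2$. The orthonormality follows from
\[
\langle P^{\tau} A_{\tau} \phi_j, P^{\tau} A_{\tau} \phi_k \rangle_{L^2(\partial M_{\tau})} = \langle A_{\tau} P^{\tau *} P^{\tau} A_{\tau} \phi_j, \phi_k \rangle_{L^2(M)} = \langle \phi_j, \phi_k \rangle_{L^2(M)} = \delta_{jk},
\]
using the identity $A_{\tau} P^{\tau *} P^{\tau} A_{\tau} = \mathrm{Id}$ on $L^2(M)$ noted in Proposition \ref{UNIT}. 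Completeness of the image follows because $P^{\tau} A_{\tau}$ is surjective onto $\ocal^0(\partial M_{\tau})$: any $u \in \ocal^0(\partial M_{\tau})$ equals $P^{\tau} A_{\tau} \cdot (A_{\tau} P^{\tau *} u)$ since $P^{\tau} A_{\tau}^2 P^{\tau *} = \Pi_{\tau}$ acts as the identity on $\ocal^0(\partial M_{\tau})$.

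There is essentially no obstacle here, as the statement is a direct structural consequence of Proposition \ref{UNIT}. The only point that requires a moment's verification is the surjectivity of $P^{\tau} A_{\tau}$, but this is already built into the unitarity assertion of Proposition \ref{UNIT} and the isomorphism property of $P^{\tau}$ stated in Proposition \ref{BG}; together they ensure that the range is all of $\ocal^0(\partial M_{\tau})$ rather than merely a closed proper subspace. Thus no further analysis is required, and the Corollary follows.
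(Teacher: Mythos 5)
Your proposal is correct and matches the paper's own argument, which simply states that the Corollary "follows from Proposition \ref{UNIT}" without spelling out the details. You have filled in exactly the intended reasoning: unitarity of $P^{\tau} A_{\tau}$ (orthonormality from $A_{\tau} P^{\tau *} P^{\tau} A_{\tau} = \mathrm{Id}$, completeness from surjectivity via $P^{\tau} A_{\tau}^2 P^{\tau *} = \Pi_{\tau}$) carries the orthonormal basis $\{\phi_j\}$ of $L^2(M)$ to an orthonormal basis of $\ocal^0(\partial M_{\tau}) = H^2(\partial M_{\tau}, d\mu_{\tau})$.
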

Since $A_{\tau} = \Delta^{\frac{m-1}{8}}  \; \mbox{mod}\;\; \Psi^{\frac{m-1}{4} - 1}$, the basis
$\{u_j^{\tau}\}$ is almost  orthonormal, i.e. the Gram matrix $\left( \langle u_j^{\tau}, u_k^{\tau} \rangle \right)
= I + K_{\tau}$ where $K_{\tau}$ is a compact operator.

\subsection{\label{LWLSECT} Pointwise Weyl laws and norms for complexifed eigenfunctions}

We denote by $$\Pi_{I_{\lambda}}(x, y)  = \sum_{j: \lambda_j \in  I_{\lambda}} \phi_j(x) \phi_j(y) $$
 the spectral projections for $\sqrt{\Delta}$ corresponding to the interval
$I_{\lambda}$, which we take to be either $[0, \lambda]$ or $[\lambda, \lambda + 1].$
The analytic continuations of the spectral projection kernels are then 
\begin{equation}\label{CXSPMa}   \Pi_{I_{\lambda}}^{\C}(\zeta, \bar{\zeta}) =
\sum_{j: \lambda_j \in  I_{\lambda}}
|\phi_{\lambda_j}^{\C}(\zeta)|^2.
\end{equation}  Since they are of  exponential growth, we also define  their  `tempered' analogues,
\begin{equation}\label{TCXSPM}   P_{ I_{\lambda}}^{\tau}(\zeta, \bar{\zeta}) =
\sum_{j: \lambda_j \in  I_{\lambda}} e^{- 2 \tau \lambda_j}
|\phi_{\lambda_j}^{\C}(\zeta)|^2, \;\; (\sqrt{\rho}(\zeta) \leq
\tau).
\end{equation}

We then have a  one-term pointwise l Weyl law for complexified
spectral projections:
\begin{prop}\label{PTAULWL}  \cite{Z2}  On any compact real analytic Riemannian manifold $(M,g)$ of dimension $m$,  we have,  with remainders uniform in
$\zeta$, For $\sqrt{\rho}(\zeta) \geq \frac{C}{\lambda}, $
$$ P_{[0, \lambda]}(\zeta, \bar{\zeta}) =  (2\pi)^{-n} \left(\frac{\lambda}{\sqrt{\rho}} \right)^{\frac{n-1}{2}}
  \left( \frac{\lambda}{(m-1)/2 + 1} +  O (1) \right);
$$

\end{prop}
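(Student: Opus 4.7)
The plan is to reduce to a Fourier Tauberian argument for the diagonal values of the analytically continued wave kernel. Write
$$N_\tau(\lambda,\zeta) := P^\tau_{[0,\lambda]}(\zeta,\bar\zeta) = \sum_{\lambda_j\leq\lambda}e^{-2\tau\lambda_j}|\phi_j^{\C}(\zeta)|^2,\qquad \tau = \sqrt{\rho}(\zeta),$$
whose Fourier transform in the spectral parameter is the diagonal value $K_\tau(t,\zeta) = U(t+2i\tau)^{\C}(\zeta,\bar\zeta)$ of the doubly-analytic continuation of the half-wave kernel to complex time $t+2i\tau$. The problem thus becomes one of computing the singularity structure of $K_\tau(t,\zeta)$ as a distribution in $t$ near $t=0$.

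First I would identify $K_\tau$ as a Hermite-type Fourier integral distribution: by Proposition \ref{REALPTAU}, $P^\tau U(t) P^{\tau*}$ is a Fourier integral operator with complex phase adapted to $\chi_{\tau,t} = \iota_\tau\circ G^t$, and on the anti-diagonal at $\zeta\in\partial M_\tau$ its wavefront localizes to $\Sigma_\tau$. Since $G^t$ has no fixed points on $S^*M$ for $0 < |t| < \mathrm{inj}(M,g)$, the only singularity of $K_\tau(\cdot,\zeta)$ in a neighborhood of $0$ is at $t=0$. Using the Hadamard--Riesz parametrix for $U(i\tau)$ from \cite{Z2}, or equivalently the Boutet de Monvel--Sj\"ostrand parametrix \eqref{PITAU} for $\Pi_\tau$ together with Proposition \ref{PS}, one represents
$$K_\tau(t,\zeta) \;=\; \int_0^\infty e^{i\theta\,\Psi(t,\zeta)}\,a(t,\zeta,\theta)\,d\theta + C^\infty,$$
with complex phase $\Psi(t,\zeta) = t + 2i\sqrt{\rho}(\zeta) + O(t^2)$ of nonnegative imaginary part vanishing to first order at $t=0$, and classical symbol $a$ of order $m-1$ whose leading coefficient is read off from the principal symbol of $\Pi_\tau$ on the contact direction. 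Integration in $\theta$ yields the conormal singularity
$$K_\tau(t,\zeta) \;\sim\; (2\pi)^{-m}\,\sqrt{\rho}(\zeta)^{-(m-1)/2}\,\bigl(t-2i\sqrt{\rho}(\zeta)\bigr)^{-(m+1)/2} + \text{lower order}.$$

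Applying the Fourier Tauberian theorem of H\"ormander to invert this singularity then produces
$$dN_\tau(\lambda,\zeta) \;=\; (2\pi)^{-m}\left(\frac{\lambda}{\sqrt{\rho}(\zeta)}\right)^{(m-1)/2}\bigl(1+O(\lambda^{-1})\bigr)\,d\lambda,$$
uniformly as long as $\sqrt{\rho}(\zeta)\geq C/\lambda$, and integrating from $0$ to $\lambda$ via $\int_0^\lambda r^{(m-1)/2}\,dr = \lambda^{(m+1)/2}/((m-1)/2+1)$ gives the stated formula. The main obstacle is justifying the parametrix representation in a neighborhood of the complexified characteristic conoid large enough to contain the anti-diagonal point $(\zeta,\bar\zeta)$ uniformly in $\zeta\in\partial M_\tau$; this is the technical core of \cite{Z2} and is what permits the complex-time Fourier integral representation in the first place. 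A secondary subtlety is the hypothesis $\sqrt{\rho}(\zeta)\geq C/\lambda$: within an $O(1/\lambda)$-neighborhood of $M$ the exponential weight $e^{-2\tau\lambda_j}$ fails to separate spectral scales, and a different transitional asymptotic would be required there.
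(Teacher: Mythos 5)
The paper itself does not prove Proposition \ref{PTAULWL}; it is cited directly from \cite{Z2} (and the two-term parametrix argument is carried out there). Your overall strategy --- tempered spectral measure, Fourier transform to the analytically continued half-wave kernel, complex-phase parametrix, Fourier Tauberian inversion --- is indeed the one used in \cite{Z2}, so you have reconstructed the right route.

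However, your account of the singularity structure is internally inconsistent, and the inconsistency sits exactly at the step the argument hinges on. You write that the phase is $\Psi(t,\zeta)=t+2i\sqrt{\rho}(\zeta)+O(t^2)$ with $\Im\Psi\geq 0$ \emph{and} that $\Psi$ vanishes to first order at $t=0$; but $\Psi(0,\zeta)=2i\sqrt{\rho}(\zeta)\neq 0$. If $\Im\Psi(0,\zeta)=2\sqrt{\rho}(\zeta)>0$, then $\int_0^\infty e^{i\theta\Psi}a\,d\theta$ converges absolutely and $K_\tau(\cdot,\zeta)$ is smooth at $t=0$; a Tauberian inversion would then give $N_\tau(\lambda,\zeta)$ bounded, not growing like $\lambda^{(m+1)/2}$. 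Likewise the displayed singularity $(t-2i\sqrt{\rho}(\zeta))^{-(m+1)/2}$ sits off the real axis, so it cannot by itself drive H\"ormander's Tauberian theorem, which reads the $\lambda\to\infty$ asymptotics of $dN_\tau/d\lambda$ from a conormal singularity of its Fourier transform \emph{on} $\R$, at $t=0$. The correct mechanism --- and the real content of the lemma --- is the exact cancellation: the complexified distance $ir_{\C}(\zeta,\bar\zeta)$ in the parametrix phase equals $\pm 2\sqrt{\rho}(\zeta)=\pm 2\tau$ at the anti-diagonal, and this cancels against the imaginary time shift $s=t+2i\tau$ that produced the tempered kernel from $U(s)^{\C}$. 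After the cancellation the leading phase is real, $\theta\cdot t$, the imaginary part vanishes to \emph{second} order at $t=0$ (in the transversal variables), and one is left with a genuine conormal singularity $K_\tau(t,\zeta)\sim c_m\,\sqrt{\rho}(\zeta)^{-(m-1)/2}(t-i0)^{-(m+1)/2}$ at $t=0$. Your symbol order is also off: a classical $\theta$-symbol of order $m-1$ would yield a $(t-i0)^{-m}$ singularity; to match $dN_\tau/d\lambda\sim\lambda^{(m-1)/2}$ you need effective order $(m-1)/2$, which is precisely what the Gaussian integration in the Hermite (complex transversal) directions produces and which you have not accounted for. These are not cosmetic: without explicitly performing the cancellation and the transversal Gaussian integration, the displayed singularity formula is wrong and the Tauberian step does not close.

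A secondary remark: you are right that the parametrix must be valid in a neighborhood of the complexified characteristic conoid large enough to reach $(\zeta,\bar\zeta)$ for all $\zeta\in\partial M_\tau$, and that this is the technical core of \cite{Z2}; and you are right that the hypothesis $\sqrt{\rho}(\zeta)\geq C/\lambda$ separates the Tauberian regime from the transitional near-real regime. Both caveats are accurately placed.
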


Although we do not need it here, the asymptotics 
for $\sqrt{\rho}(\zeta) \leq \frac{C}{\lambda}$ are given by  $$ P_{[0,
\lambda]}(\zeta, \bar{\zeta}) =  (2\pi)^{-m} \; \lambda^{m}
\left(1 + O(\lambda^{-1}) \right]. $$
We now set $\sqrt{\rho} = \tau$ and define
$$M(\lambda) =  (2\pi)^{-m}  \tau^{- \frac{m-1}{2}}  \frac{\lambda^{\frac{m+1}{2}}}{(m-1)/2 + 1}. $$

The pointwise Weyl laws  imply upper bounds on sup norms of complexified eigenfunctions \cite{Bou,GLS,Z2}.

\begin{prop} \label{PWa} \cite{Z2} Suppose  $(M, g)$ is real analytic of dimension $m$. Then,

\begin{enumerate}

\item For $\tau \geq \frac{C}{\lambda} $ and $\sqrt{\rho}(\zeta) = \tau$, 
there exists $C > 0$ so that
$$ C
\lambda_j^{-\frac{m-1}{2}} e^{ \tau \lambda} \leq \sup_{\zeta \in
M_{\tau}} |\phi^{\C}_{\lambda}(\zeta)| \leq C
  \lambda^{\frac{m-1}{4} + \half} e^{\tau \lambda}.
$$

\item  For $\tau\leq \frac{C}{\lambda}, $ and $\sqrt{\rho}(\zeta) = \tau$,  there
exists $C > 0$ so that
$$ |\phi^{\C}_{\lambda}(\zeta)| \leq \lambda^{\frac{m - 1}{2}};
$$

\end{enumerate}

\end{prop}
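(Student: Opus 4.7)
I would derive both parts of the proposition from the pointwise Weyl law of Proposition \ref{PTAULWL} by extracting individual eigenfunction bounds using non-negativity. Since each summand in $P^{\tau}_{[\lambda,\lambda+1]}(\zeta,\bar{\zeta})$ is non-negative, any $\lambda_j\in[\lambda,\lambda+1]$ satisfies
$$e^{-2\tau\lambda_j}|\phi^{\C}_{\lambda_j}(\zeta)|^2 \;\leq\; P^{\tau}_{[\lambda,\lambda+1]}(\zeta,\bar{\zeta}).$$
Differencing the two-term asymptotic of Proposition \ref{PTAULWL} at $\lambda$ and $\lambda+1$ yields $P^{\tau}_{[\lambda,\lambda+1]}(\zeta,\bar{\zeta})=O(\lambda^{(m-1)/2}\tau^{-(m-1)/2})$; in the regime $\tau\geq C/\lambda$ I then trade $\tau^{-(m-1)/2}$ for a power of $\lambda$ to conclude a sup-norm bound on $|\phi^{\C}_{\lambda_j}(\zeta)|$ of the correct order, multiplied by $e^{\tau\lambda_j}$. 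For part (2), when $\tau\leq C/\lambda$ the weight $e^{-2\tau\lambda_j}$ is pinched between two positive constants, so the same differencing applied to the short-distance asymptotic $P_{[0,\lambda]}(\zeta,\bar{\zeta})=(2\pi)^{-m}\lambda^m(1+O(\lambda^{-1}))$ directly yields $|\phi^{\C}_\lambda(\zeta)|^2=O(\lambda^{m-1})$.

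The lower bound in part (1) is an independent argument based on Plancherel for $P^{\tau}$. From Proposition \ref{PS} and the fact that $A_{\tau}=(P^{\tau*}P^{\tau})^{-1/2}$ is an elliptic pseudodifferential operator of order $(m-1)/4$ with principal symbol $|\xi|^{(m-1)/4}$, one computes
$$\|P^{\tau}\phi_\lambda\|^2_{L^2(\partial M_{\tau})} = \langle A_{\tau}^{-2}\phi_\lambda,\phi_\lambda\rangle_M \;\sim\; \lambda^{-(m-1)/2}.$$
Since $P^{\tau}\phi_\lambda=e^{-\tau\lambda}\phi^{\C}_\lambda|_{\partial M_{\tau}}$ this gives $\|\phi^{\C}_\lambda\|^2_{L^2(\partial M_{\tau})}\sim e^{2\tau\lambda}\lambda^{-(m-1)/2}$, and the trivial inequality $\|f\|^2_{L^2}\leq \vol(\partial M_{\tau})\sup|f|^2$ furnishes the lower bound (in fact the slightly sharper estimate $\sup|\phi^{\C}_\lambda|\geq c\lambda^{-(m-1)/4}e^{\tau\lambda}$, which implies the stated one).

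The principal technical point requiring care is the justification of the unit-window differencing of Proposition \ref{PTAULWL}: the $O(1)$ remainder inside the brackets is not manifestly differentiable in the spectral parameter. One circumvents this either by invoking a Fourier Tauberian argument applied directly to the localized wave-kernel representation of $P^{\tau}_{[\lambda,\lambda+1]}$, or by simply observing that the remainder's contribution to the difference is of the same order as the differentiated leading term and therefore does not spoil an order-of-magnitude upper bound. No deeper obstacle arises; the entire proof is a bookkeeping consequence of Proposition \ref{PTAULWL} together with the mapping and symbolic properties of $P^{\tau}$ and $A_{\tau}$ recorded in Propositions \ref{BG} and \ref{PS}.
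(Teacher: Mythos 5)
The paper gives no proof of this proposition; it cites the result directly from \cite{Z2}, and the only gesture toward an argument is the sentence immediately preceding the statement, ``The pointwise Weyl laws imply upper bounds on sup norms of complexified eigenfunctions.'' So there is no in-paper proof to compare against. Your reconstruction follows exactly that hint, and the argument is correct.

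Two remarks on the bookkeeping. First, your unit-window differencing in fact yields a \emph{sharper} upper bound than the one stated: from $P^{\tau}_{[\lambda,\lambda+1]}(\zeta,\bar\zeta)=O(\tau^{-(m-1)/2}\lambda^{(m-1)/2})$ you get $|\phi^{\C}_{\lambda}(\zeta)|\le C\,\tau^{-(m-1)/4}\lambda^{(m-1)/4}e^{\tau\lambda}$, so for $\tau$ bounded away from $0$ the polynomial factor is $\lambda^{(m-1)/4}$, a full $\lambda^{1/2}$ better than the stated $\lambda^{(m-1)/4+1/2}=\lambda^{(m+1)/4}$. The stated exponent $(m+1)/4$ is exactly what one gets from the cruder estimate $e^{-2\tau\lambda}|\phi^{\C}_{\lambda}|^{2}\le P^{\tau}_{[0,\lambda+1]}(\zeta,\bar\zeta)=O(\tau^{-(m-1)/2}\lambda^{(m+1)/2})$ \emph{without} differencing at all, so the proposition is evidently being stated loosely; saying your bound is ``of the correct order'' undersells what you proved. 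Also note that the trade $\tau^{-(m-1)/4}\lesssim\lambda^{(m-1)/4}$ valid on $\tau\ge C/\lambda$ produces $\lambda^{(m-1)/2}$, which exceeds $\lambda^{(m+1)/4}$ once $m\ge4$, so this route does not give the stated bound uniformly down to $\tau\sim C/\lambda$ in high dimension; again this is a wrinkle in the statement rather than a gap in your argument, and is irrelevant for the paper's use of the result (which only needs the exponential rate and some polynomial factor). Second, the concern you flag at the end about differentiability of the $O(1)$ remainder is a red herring: you are \emph{differencing} the asymptotic at $\lambda$ and $\lambda+1$, not differentiating a remainder, and the difference of two $O(1)$ quantities is $O(1)$; the only thing needed is uniformity of the remainder in $\zeta$, which Proposition \ref{PTAULWL} asserts. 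Your Plancherel argument for the lower bound, using $A_{\tau}^{-2}=P^{\tau*}P^{\tau}$ with principal symbol $|\xi|^{-(m-1)/2}$, is correct and does indeed yield the sharper $\sup|\phi^{\C}_{\lambda}|\gtrsim\lambda^{-(m-1)/4}e^{\tau\lambda}$ as you observe.
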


By  Corollary \ref{ONB}  the operator $P^{\tau} A^{\tau}$ is unitary so that
$\{P^{\tau} A_{\tau} \phi_j\}$ are orthonormal.  They are asymptotically the same as
$$\lambda_j^{\frac{m-1}{4}} P^{\tau} \phi_j = \lambda_j^{\frac{m-1}{4}} e^{- \tau \lambda_j} \phi_j^{\C} |_{\partial M_{\tau}}, $$hence these are almost normalized to have $L^2$-norms equal
to one.

\section{\label{WG} The wave group in the complex domain }

To prove Proposition \ref{LL}, we need to consider the symmetries of the matrix elements. 
The key operator in studying restriction to complexified geodesics is the composition
\begin{equation} \gamma_{x, \xi}^{\tau *} \circ P^{\tau}: L^2(M) \to \ocal(\partial S_{\tau}), \end{equation}
where $\gamma_{x, \xi}^{\tau *}$ is the pullback under \eqref{gammatau}.  To prove Proposition \ref{LL}, 
we show that to leading order, this operator intertwines the wave group $U^t = e^{i t \sqrt{\Delta}}$ on $M$ with the
translation operator $T_t f(s + i \tau) = f(s + t + i \tau)$ on $\partial S_{\tau}$.  On the infinitesimal
level, $ \gamma_{x, \xi}^{\tau *} \circ P^{\tau}$ intertwines $\frac{\partial}{\partial t}$ with the generator
$\Xi_{\sqrt{H}}$ of the the geodesic flow.  Here and below, we identify a vector field $\Xi$ with the differential
operator $\Xi(f) = df (\Xi)$. 

It is simpler 
to work with a unitarily equivalent ``wave group"  $\tilde{V}_{\tau}^t$ acting on the $\ocal^0(\partial M_{\tau})$.
In this section, we define and study this wave group. The symbol calculus of  Toeplitz Fourier integral operators  shows that it is essentially the compression
to $\ocal^0(\partial M_{\tau})$ of translation by the (non-holomorphic) Hamilton flow of $\sqrt{\rho}$. This
makes it evident that pullback by $\gamma_{x, \xi}^{\tau}$ intertwines the wave group with translations.

\subsection{The wave group $\tilde{V}_{\tau}^t$} .
\bigskip

We first define the operators $V_{\tau}^t$:

\begin{defin} \label{VINTRO}  Let $A_{\tau} = (P^{\tau *} P^{\tau})^{-\half}$ \eqref{ATAU}. Then set
$$V_{\tau}^t = P^{\tau} U^t A_{\tau}^2 P^{\tau *}: \ocal^0(\partial M_{\tau}) \to \ocal^0(\partial M_{\tau}). $$
\end{defin}

The advantage of $V_{\tau}^t$ is that its eigenfunctions are natural:
\begin{equation}  \label{VTEV}  V_{\tau}^t  P^{\tau} \phi_j = e^{i t \lambda_j} P^{\tau} \phi_j. \end{equation}
Indeed,  from Proposition \ref{UNIT} ,  $ A_{\tau}^2 P^{\tau *}$ is a left inverse to $P^{\tau}$. Hence it suffices to observe that
$$ P^{\tau} U^t A_{\tau}^2 P^{\tau *} P^{\tau} \phi_j = e^{i t \lambda_j} P^{\tau} \phi_j. $$

The disadvantage is
that $V_{\tau}^t$ is not a unitary group
and is not even normal since the $A_{\tau}^2$ factors moves to the left side of $U^t$
when taking the adjoint and in general $U^t$ and $A_{\tau}$ do not commute.  So the spectral theorem does
not apply to $V_{\tau}^t$.  This defect is remedied by:
\begin{defin} \label{VINTROalt}   Define 
$$\tilde{V}_{\tau}^t = P^{\tau} A_{\tau} U^t A_{\tau} P^{\tau *}: \ocal^0(\partial M_{\tau}) \to \ocal^0(\partial M_{\tau}). $$
\end{defin}

\begin{prop} \label{TILDEV}  $\tilde{V}_{\tau}^t$ is a unitary group   with eigenfunctions
$$\tilde{V}_{\tau}^t P^{\tau} A_{\tau} \phi_j = e^{i t \lambda_j} P^{\tau} A_{\tau} \phi_j . $$\end{prop}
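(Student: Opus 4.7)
The plan is to verify the three assertions---eigenfunction identity, group law, and unitarity---by exploiting the two identities recorded in Proposition \ref{UNIT}, together with the self-adjointness of $A_\tau$ and the fact that $\{P^\tau A_\tau\phi_j\}$ is an orthonormal basis of $\ocal^0(\partial M_\tau)$ (Corollary \ref{ONB}).

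First I would establish the eigenvalue relation. Proposition \ref{UNIT} gives $A_\tau P^{\tau *} P^\tau A_\tau = \mathrm{Id}$ on $L^2(M)$, so applying $\tilde V_\tau^t = P^\tau A_\tau U^t A_\tau P^{\tau *}$ to $P^\tau A_\tau \phi_j$ produces
\begin{equation*}
\tilde V_\tau^t (P^\tau A_\tau \phi_j) = P^\tau A_\tau U^t \bigl(A_\tau P^{\tau *}P^\tau A_\tau\bigr)\phi_j = P^\tau A_\tau U^t \phi_j = e^{it\lambda_j} P^\tau A_\tau \phi_j.
\end{equation*}

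Next I would deduce the group property $\tilde V_\tau^{t+s} = \tilde V_\tau^t\,\tilde V_\tau^s$ by inserting the same identity in the middle of the composition:
\begin{equation*}
\tilde V_\tau^t\,\tilde V_\tau^s = P^\tau A_\tau U^t\bigl(A_\tau P^{\tau *}P^\tau A_\tau\bigr)U^s A_\tau P^{\tau *} = P^\tau A_\tau U^{t+s} A_\tau P^{\tau *} = \tilde V_\tau^{t+s}.
\end{equation*}
In particular $\tilde V_\tau^0 = P^\tau A_\tau^2 P^{\tau *} = \Pi_\tau$ by Proposition \ref{PS}, which is the identity operator on $\ocal^0(\partial M_\tau)$.

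For unitarity I would use that $A_\tau$ is self-adjoint and $(U^t)^* = U^{-t}$, giving
\begin{equation*}
(\tilde V_\tau^t)^* = P^\tau A_\tau U^{-t} A_\tau P^{\tau *} = \tilde V_\tau^{-t},
\end{equation*}
so by the group law just proved $\tilde V_\tau^t (\tilde V_\tau^t)^* = (\tilde V_\tau^t)^* \tilde V_\tau^t = \tilde V_\tau^0 = \Pi_\tau$, which is the identity on $\ocal^0(\partial M_\tau)$. Alternatively, since Corollary \ref{ONB} asserts that $\{P^\tau A_\tau \phi_j\}$ is an orthonormal basis of $\ocal^0(\partial M_\tau)$ and the eigenvalue computation shows $\tilde V_\tau^t$ acts diagonally in this basis by multiplication by the unit scalars $e^{it\lambda_j}$, unitarity is immediate. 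There is no serious obstacle here; the only subtlety is bookkeeping to confirm that each composition is interpreted on the correct Hilbert space, which is handled by reading Proposition \ref{UNIT} as a pair of mutually inverse unitaries $P^\tau A_\tau$ and $A_\tau P^{\tau *}$ between $L^2(M)$ and $\ocal^0(\partial M_\tau)$.
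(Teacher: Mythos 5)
Your proof is correct and takes essentially the same route as the paper: both rely on the identity $A_\tau P^{\tau*}P^\tau A_\tau = \mathrm{Id}$ from Proposition \ref{UNIT} to collapse the middle of each composition, establishing the group law, unitarity (via $(\tilde V_\tau^t)^* = \tilde V_\tau^{-t}$ and $\tilde V_\tau^0 = \Pi_\tau$), and the eigenvalue relation. You've merely reordered the three verifications, and your added remark that unitarity also follows directly from Corollary \ref{ONB} is a nice redundancy check but not a genuinely different argument.
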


\begin{proof}  By Proposition \ref{UNIT}, 
$$\begin{array}{lll} \tilde{V}_{\tau}^t \tilde{V}_{\tau}^{t *} & = &  P^{\tau} A_{\tau} U^t A_{\tau} P^{\tau *}  P^{\tau} A_{\tau} U^{-t} A_{\tau} P^{\tau *} \\ &&\\ && 
= P^{\tau } P^{\tau *} = \Pi_{\tau}: L^2(\partial M_{\tau}) \to L^2(\partial M_{\tau}), \end{array}$$
so that $\tilde{V}_{\tau}^t$ is unitary. Also, 
$$\begin{array}{lll} \tilde{V}_{\tau}^t \tilde{V}_{\tau}^{s} & = &  P^{\tau} A_{\tau} U^t A_{\tau} P^{\tau *}  P^{\tau} A_{\tau} U^{s} A_{\tau} P^{\tau *} \\ &&\\ && 
= P^{\tau } A_{\tau} U^{t+s}A_{\tau}P^{\tau *}  =  \tilde{V}_{\tau}^{t + s} : L^2(\partial M_{\tau}) \to L^2(\partial M_{\tau}), \end{array}$$

Similarly,
$$\tilde{V}_{\tau}^t P^{\tau} A_{\tau} \phi_j =   P^{\tau} A_{\tau} U^t A_{\tau} P^{\tau *}  P^{\tau} A_{\tau} \phi_j 
= e^{i t \lambda_j} P^{\tau} A_{\tau} \phi_j. $$

\end{proof}

\subsection{\label{VTOEP} $V_{\tau}^t$ as a Fourier integral Toeplitz operator}


The following Proposition states the analogue of Propositions \ref{BGa}-\ref{REALPTAU} for $V_{\tau}^t$. 
It shows that  $V^t_{\tau}$ is a Fourier integral operator with complex phase of Hermite type on $
\ocal^0(\partial M_{\tau}) \subset L^2(\partial M_{\tau})$ 
which  is ``adapted" to the graph of the Hamiltonian flow of $\sqrt{\rho}$ on the symplectic cone $\Sigma_{\tau} $
associated to the Hardy space $\ocal^0(\partial M_{\tau})$ in the sense of \cite{BoGu} (see \S \ref{S}).  

\begin{prop}  \label{VWAVE} $V^t_{\tau}$ and $\tilde{V}_{\tau}^t$  are Fourier integral operators with complex phase
 of Hermite type on $
\ocal^0(\partial M_{\tau}) \subset L^2(\partial M_{\tau})$ 
adapted to the graph of the Hamiltonian flow of $\sqrt{\rho}$ on $\Sigma_{\tau}. $  They have the 
same canonical relations and principal symbols
$$ \sigma_{V_{\tau}^t} = |\xi|^{\frac{m-1}{4}} \sigma_{P^{\tau}} \circ \sigma_{U^t} \circ \sigma_{P^{\tau *}}. $$

 \end{prop}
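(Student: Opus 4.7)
The plan is to express $V_\tau^t$ and $\tilde V_\tau^t$ as compositions of operators whose Fourier integral character has already been established, and then invoke the composition calculus for Hermite Fourier integral operators developed in \cite{BoGu}. Using $A_\tau^\ast = A_\tau$, write $\tilde V_\tau^t = (P^\tau A_\tau)\, U^t\, (P^\tau A_\tau)^\ast$ and $V_\tau^t = P^\tau\, U^t\, A_\tau^2\, P^{\tau\ast}$; each is then a composition of $P^\tau$ (a Hermite FIO of order $-\tfrac{m-1}{4}$ adapted to $\iota_\tau$, by Proposition \ref{BG}), the elliptic pseudodifferential operator $A_\tau$ of order $\tfrac{m-1}{4}$ on $M$, the wave FIO $U^t$ associated to the graph of the homogeneous geodesic flow $G^t$, and adjoints.

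First, I would verify that the requisite compositions are clean in the sense of \cite{BoGu, MSj}. The geometric content is the composition $\iota_\tau \circ \mathrm{graph}(G^t) \circ \iota_\tau^{-1}$ in $\Sigma_\tau \times \Sigma_\tau$, which is clean because $\iota_\tau$ is a homogeneous symplectic diffeomorphism of cones and $G^t$ is a symplectic diffeomorphism of $T^\ast M - 0$. By Theorems 3.4 and 7.5 of \cite{BoGu}, the composition of a Hermite FIO with a real-phase FIO and another Hermite FIO is again a Hermite FIO of complex phase, with orders that add and a canonical relation that composes in the expected way; the pseudodifferential factors $A_\tau$ are absorbed harmlessly since they preserve the Hermite class and only shift the order.

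Second, I would identify the composed canonical relation with the graph of the Hamilton flow of $\sqrt{\rho}$ on $\Sigma_\tau$. The composition produces
\[
\{(\iota_\tau(G^t(y,\eta)),\, \iota_\tau(y,\eta)) : (y,\eta) \in T^\ast M - 0\} \subset \Sigma_\tau \times \Sigma_\tau .
\]
By Lemma \ref{GTAU}, $\iota_\tau$ coincides with the imaginary-time flow $G^{i\tau}|_{T^\ast M}$ mapping into $\Sigma_\tau$, and the holomorphic geodesic flow obeys $G^{i\tau}\circ G^t = G^{t+i\tau}$. Combined with Proposition \ref{IMPID}, which shows that on configuration space the base projection of $G^{t+i\tau}$ orbits coincides with orbits of $\exp t\,\Xi_{\sqrt{\rho}}$ on $\partial M_\tau$, and lifting cotangentially via the symplectic map $\iota_\tau$, the composed relation is exactly the graph of $\exp t\,\Xi_{\sqrt{\rho}}$ restricted to $\Sigma_\tau$, as required.

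Finally, the principal symbol comes out by the symbol composition rule for Hermite FIOs of \cite{BoGu}: the rank-one ground-state symbols of $P^\tau$ and $P^{\tau\ast}$ (computed in \cite{BoSj}) compose on the graph with $\sigma_{U^t}$, and the pseudodifferential factors $A_\tau$ contribute $|\xi|^{(m-1)/4}$ through their principal symbols. That $V_\tau^t$ and $\tilde V_\tau^t$ share this principal symbol is because both contain $A_\tau^2$ in total: commuting one factor of $A_\tau$ across $P^{\tau\ast}$ changes the operator only at lower order in the Hermite calculus. The main technical obstacle will be the careful bookkeeping of orders in the Hermite calculus and tracking the Gaussian symplectic-normal symbol data in the composition rule; once this is carried out, the identification of the canonical relation is a direct consequence of Lemma \ref{GTAU} and Proposition \ref{IMPID}, and the symbol formula is an immediate application of the \cite{BoGu} composition rule.
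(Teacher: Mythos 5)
Your proposal is correct and follows essentially the same route as the paper: decompose $V_\tau^t$ and $\tilde V_\tau^t$ into $P^\tau$, $A_\tau$, $U^t$ and adjoints, invoke the Hermite composition calculus of Boutet de Monvel--Guillemin, and identify the composed canonical relation with the graph of $\exp t\,\Xi_{\sqrt{\rho}}$ via Lemma \ref{GTAU} and Proposition \ref{IMPID}. The only cosmetic difference is that you work directly with the real-point (adapted) relation $\iota_\tau\circ\operatorname{graph}(G^t)\circ\iota_\tau^{-1}$, whereas the paper first writes out the complex composition $\gcal_{\tau,t}\circ\gcal_\tau^*$ and then intersects with $\Sigma_\tau\times\Sigma_\tau$; this is the same computation read from opposite ends.
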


\begin{proof}  That both operators have the same canonical relation and symbol is obvious because they
only differ in the order of the pseudo-differential operator $A_{\tau}$, which has a scalar symbol. 
By definition   and by Proposition \ref{BGa}, $\tilde{V}_{\tau}^t$ is a composition of Fourier integral operators with complex phase, and 
all are associated to canonical graphs and equivalence relations. Moreover all are operators
of Hermite type.  If follows that the composition is transversal,
so that $\tilde{V}_{\tau}^t$  is also a  Fourier integral operator with complex phase and of Hermite type. 

The underlying complex canonical
transformation is $$ \begin{array}{l}   \gcal_{\tau, t}  \circ \gcal_{\tau}^*  
= \{(\zeta, p_{\zeta} , \zeta', p_{\zeta'} \} \subset  T^* \tMpt  \times T^* \tMpt: \\ \\
\exists (y, \eta) \in T^* M: 
(\zeta, p_{\zeta}) \sim G^{i \tau + t}(y, \eta) , \;\;\;
(\zeta', p_{\zeta'}) \sim G^{i \tau}(y, \eta) \}\end{array}$$ where $ \gcal_{\tau, t} $ is defined  in \eqref{GAMMADEF}. 
Thus 
$$
(\zeta, p_{\zeta}) \sim G^{t} (\zeta', p_{\zeta}')$$
and there is the additional constraint that $(\zeta', p_{\zeta'}) \sim G^{i \tau}(y, \eta)  \in \Sigma_{\tau}$. 

The real points of this canonical relation is its intersection with $\Sigma_{\tau} \times \Sigma_{\tau}$,
and then we have 
$$ \begin{array}{l}   \gcal_{\tau, t}  \circ \gcal_{\tau}^*  \cap \Sigma_{\tau} \times \Sigma_{\tau} 
= \{(\zeta, p_{\zeta} , \zeta', p_{\zeta'} \} \subset  \Sigma_{\tau} \times \Sigma_{\tau}: 
(\zeta, p_{\zeta})  =G^{ t}
(\zeta', p_{\zeta'})  \}\end{array}.$$ 
But then $\zeta' \in \partial M_{\tau}, p_{\zeta} = c d^c \sqrt{\rho}$ and by Proposition \ref{IMPID} and Lemma
\ref{GTAU},
$$G^t(\zeta',  d^c \sqrt{\rho}) = (\exp t \Xi_{\sqrt{\rho}}(\zeta'),    d^c \sqrt{\rho}). $$
It follows that $V_{\tau}^t$ is adapted to the graph of  $E  G^t E ^{-1}= \exp t \Xi_{\sqrt{\rho}}$
on  $\Sigma_{\tau}$.

The symbol calculation is then a routine use of the composition
 theory of Fourier integral operators and therefore we only outline it here: 
The symbol of $U^t$ is well-known to be the canonical volume half-density on the graph of $G^t$.  On the real
points of the canonical relation $C_{\tau}$ of $\Pi_{\tau}$, this volume half-density is conjugated to the 
canonical volume half-density on the graph of $\exp t \Xi_{\rho} $ in $\Sigma_{\tau} \times \Sigma_{\tau}$.

\end{proof}



\subsection{$V^t_{\tau}$ as a Toeplitz Fourier integral operator}

We recall that in \cite{BoGu} a special symbol calculus was developed for Fourier integral operators
of Hermite type adapted to symplectic diffeomorphisms, i.e. for compressions (restrictions)
 $\Pi_{\tau} F \Pi_{\tau}$ 
of Fourier integral operators $F$  of complex type to the Hardy space $H^2(\partial M_{\tau}$. Since
the operators $\tilde{V}_{\tau}^t$ and $V_{\tau}^t$ of this type, they have Toeplitz symbols in this
sense. In the next Proposition, we calculae these symbols and put the operators
into a more geometric form of Toeplitz quantizations of a Hamiltonian flow. Such a Toeplitz quantization 
was defined by Boutet de Monvel \cite{Bou2} and more systematically developed in \cite{Z3}.

To state the next Proposition, we need some further notation and background on symbols of Szeg\"o
projectors. We recall from \cite{BoGu}  that $\sigma_{\Pi_{\tau}}$ is a rank one projector onto a ground state $e_{\Lambda}$
in the quantization of the symplectic transversal $T \Sigma^{\perp} \subset T^* \partial M_{\tau}$
to $T \Sigma$. The ground state is annihilated by a Lagrangian system of  Cauchy-Riemann equations corresponding
to a Lagrangian subspace $\Lambda \subset T \Sigma$.

The time evolution of $\Pi_{\tau}$ under the flow $g^t_{\tau}$ is defined by
\begin{equation} \label{PITAUT} \Pi_{\tau}^t = g_{\tau}^{-t}  \Pi_{\tau}  g_{\tau}^t. \end{equation}   
It is another \szego projector adapted  to the symplectic cone $\Sigma_{\tau}$;
since $g^t$
is not a family of holomorphic maps in general, $\Pi_{\tau}^t$ is
associated to a new CR (complex) structure and  translation by $g^t_{\tau}$   does not commute with $\Pi_{\tau}$. 
But $\Sigma_{\tau}$ is invariant under the flow and $g^t$ clearly commutes with the identity map on $\Sigma_{\tau}$.
The change in the range of $\Pi_{\tau}^t$ under $t$ is encoded to leading order in the change of the principal symbol.

 Under $dg^t_{\tau}$, the Lagrangian $\Lambda$  goes to a new
Lagrangian $\Lambda_t$ and $\sigma_{\Pi_{\tau} ^t}$ is 
a rank one projector onto a ground state $e_{\Lambda_{\tau}^t}$  depending on $t$.
 If we right compose
by $\Pi_{\tau}$ the composite symbol is  \begin{equation}
\label{SYMBPIT}  \sigma(\Pi_{\tau} \Pi_{\tau}^t \Pi_{\tau}) = | \langle e_{\Lambda_{\tau}} , e_{\Lambda_{\tau}^t} \rangle |^2
\sigma_{\Pi_{\tau}}. \end{equation}
The overlap $\langle e_{\Lambda_{\tau}} , e_{\Lambda_{\tau}^t} \rangle $ is an inner product of two Gaussians
and is calculated explicitly in \cite{Z3}.  It is nowhere vanishing.

\begin{prop}\label{VT} Let $g^t_{\tau} = \exp t \Xi_{\sqrt{\rho}}$ on $\partial M_{\tau}$ and let
\begin{equation} \label{sigmataut} \sigma_{\tau, t} =  \langle e_{\Lambda_t}, e_{\Lambda} \rangle^{-1}. \end{equation}
 Then
\begin{equation} \label{CXWAVEGROUP} V^t_{\tau}: = \Pi_{\tau} g_{\tau}^t \sigma_{t, \tau} \Pi_{\tau},
\;\;\;\; \tilde{V}_{\tau}^t  = \Pi_{\tau} \sqrt{\sigma_{t, \tau}} g_{\tau}^t \sqrt{\sigma_{t, \tau}} \Pi_{\tau} 
\end{equation} \end{prop}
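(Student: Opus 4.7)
The plan is to identify both sides of \eqref{CXWAVEGROUP} as Toeplitz (Hermite) Fourier integral operators in the Boutet de Monvel--Guillemin sense, each adapted to the graph of the Hamilton flow $g^t_\tau = \exp t\Xi_{\sqrt{\rho}}$ on $\Sigma_\tau$, and then to match principal Toeplitz symbols on the real canonical relation. Since a Hermite FIO adapted to a given symplectic graph is determined up to lower order by its principal Toeplitz symbol, this will yield \eqref{CXWAVEGROUP} modulo a lower-order Hermite correction, which is absorbed into the scalar factor $\sigma_{t,\tau}$ by the recursive Boutet de Monvel--Guillemin symbol construction (\cite{BoGu}, Theorems 3.4 and 7.5).

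First I would record the symbol of the left-hand side. By Proposition \ref{VWAVE}, $V^t_\tau$ and $\tilde V^t_\tau$ are Hermite FIOs with complex phase adapted to the graph of $g^t_\tau$ on $\Sigma_\tau$, with principal symbol
\[
\sigma_{V^t_\tau} = |\xi|^{(m-1)/4}\,\sigma_{P^\tau}\circ\sigma_{U^t}\circ\sigma_{P^{\tau\,*}}.
\]
The factor $|\xi|^{(m-1)/4}$ comes from $A_\tau^2$, while the remaining product is, by Lemma \ref{HAMID} and Proposition \ref{IMPID}, the $E$-pushforward to $\Sigma_\tau$ of the canonical half-density on the graph of $G^t$ in $T^*M$. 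Thus $\sigma_{V^t_\tau}$ is, in the normal form \eqref{SYMBOL} of a Toeplitz symbol, the canonical half-density on the graph of $g^t_\tau$ tensored with the rank-one projector $\sigma_{\Pi_\tau}$ on the symplectic transversal to $\Sigma_\tau$.

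Next I would compute the symbol of the right-hand side. Differentiating \eqref{PITAUT} one has $\Pi_\tau g^t_\tau = g^t_\tau \Pi^t_\tau$, so
\[
\Pi_\tau g^t_\tau \Pi_\tau = g^t_\tau \Pi^t_\tau \Pi_\tau .
\]
By the composition formula \eqref{SYMBPIT} adapted to the product of the two distinct Szeg\"o projectors $\Pi^t_\tau$ and $\Pi_\tau$, the leading Toeplitz symbol of $\Pi^t_\tau \Pi_\tau$ on $\Sigma_\tau$ is the Gaussian overlap $\langle e_\Lambda, e_{\Lambda_t}\rangle$ of the ground states associated with $\Lambda$ and its image $\Lambda_t$ under $dg^t_\tau$, while $g^t_\tau$ carries the canonical half-density on its own graph. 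The principal symbol of $\Pi_\tau g^t_\tau \Pi_\tau$ is therefore the product of the canonical half-density on the graph of $g^t_\tau$ with the Gaussian overlap factor $\langle e_\Lambda, e_{\Lambda_t}\rangle$. Multiplying by the scalar Toeplitz factor $\sigma_{t,\tau} = \langle e_{\Lambda_t}, e_\Lambda\rangle^{-1}$ then cancels this overlap and produces exactly the symbol of $V^t_\tau$ from the previous step. The identity for $\tilde V^t_\tau$ follows analogously: the two symmetric factors of $\sqrt{\sigma_{t,\tau}}$ sandwiching $g^t_\tau$ combine to give the same overall cancellation, and the symmetric placement is consistent with $\tilde V^t_\tau$ being unitary (Proposition \ref{TILDEV}), whereas the one-sided placement in $V^t_\tau$ reflects its non-normality.

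The main technical step is the explicit evaluation of the Gaussian overlap $\langle e_\Lambda, e_{\Lambda_t}\rangle$ and checking that it is precisely the quotient of the Toeplitz symbol of $\Pi_\tau g^t_\tau \Pi_\tau$ by the canonical half-density on the graph. This overlap is a nowhere-vanishing Gaussian integral on the symplectic transversal, computed explicitly in \cite{Z3}; feeding it into the composition theorem of \cite{BoGu} gives the identities \eqref{CXWAVEGROUP} at the level of principal symbols. A standard inductive step-down in the Boutet de Monvel--Guillemin symbol calculus then adjusts $\sigma_{t,\tau}$ by successive lower-order corrections so that the identity holds to all orders, completing the proof.
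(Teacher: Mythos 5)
Your approach and the paper's both begin by observing that each side of \eqref{CXWAVEGROUP} is an elliptic Toeplitz (Hermite) Fourier integral operator adapted to the graph of $g^t_\tau$ on $\Sigma_\tau$, but the two arguments then diverge in how they pin down $\sigma_{\tau,t}$. The paper's proof is \emph{indirect}: it exploits the unitarity relation $\tilde V^t_\tau \tilde V^{t*}_\tau = \Pi_\tau$ from Proposition \ref{TILDEV}. Passing to principal symbols, this unitarity forces $|\sigma_{\tau,t}|^2\,\sigma_{\Pi_\tau}\circ\sigma_{g^{-t}\Pi_\tau g^t}\circ\sigma_{\Pi_\tau}=\sigma_{\Pi_\tau}$, and combined with \eqref{SYMBPIT} this yields $|\sigma_{\tau,t}|^2 = |\langle e_{\Lambda_t},e_\Lambda\rangle|^{-2}$. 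The paper then \emph{chooses} the phase so that $\sigma_{\tau,t}=\langle e_{\Lambda_t},e_\Lambda\rangle^{-1}$. Nothing about $\sigma_{P^\tau}$ or $\sigma_{U^t}$ needs to be computed beyond what is already established. Your proof instead attempts a \emph{direct} match of the principal symbols on both sides, which if carried through correctly would determine the phase as well and so is potentially a stronger argument.

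However, there are two gaps in the way you have written it. First, you assert that $\sigma_{V^t_\tau} = |\xi|^{(m-1)/4}\sigma_{P^\tau}\circ\sigma_{U^t}\circ\sigma_{P^{\tau*}}$ equals ``the canonical half-density on the graph of $g^t_\tau$ tensored with the rank-one projector $\sigma_{\Pi_\tau}$,'' but this is neither precisely stated nor proved: the Hermite symbol of an operator adapted to a nontrivial symplectomorphism is a rank-one \emph{operator} of the form $|e_{\Lambda_t}\rangle\langle e_\Lambda|$, not a projector, and identifying the scalar half-density factor and the absence of any further scalar requires actually computing $\sigma_{P^\tau}$ on the mixed canonical relation $\Gamma_\tau$ of \eqref{GAMMAtDEF} — a computation you do not carry out and which the paper deliberately avoids. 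Second, the scalar factor in the Toeplitz symbol of $\Pi^t_\tau\Pi_\tau$ is $\langle e_{\Lambda_t},e_\Lambda\rangle$ (from $\sigma(\Pi^t_\tau)\circ\sigma(\Pi_\tau) = |e_{\Lambda_t}\rangle\langle e_{\Lambda_t}|\cdot|e_\Lambda\rangle\langle e_\Lambda| = \langle e_{\Lambda_t},e_\Lambda\rangle\,|e_{\Lambda_t}\rangle\langle e_\Lambda|$), not $\langle e_\Lambda,e_{\Lambda_t}\rangle$ as you wrote; with your ordering, multiplication by $\sigma_{t,\tau}=\langle e_{\Lambda_t},e_\Lambda\rangle^{-1}$ leaves a spurious unimodular phase $\overline{\langle e_{\Lambda_t},e_\Lambda\rangle}/\langle e_{\Lambda_t},e_\Lambda\rangle$ rather than cancelling. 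Fixing these two points would make your direct argument a legitimate alternative to the paper's unitarity argument — and one that actually \emph{derives} the phase that the paper merely selects — but as written the symbol matching does not close.
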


\begin{proof}  It suffices to observe that each side of each formula is an elliptic  Toeplitz Fourier integral operator
associated to the graph of $g_{\tau}^t$.  In the case of $V_{\tau}^t$ and $\tilde{V}_{\tau}^t$ this follows
from Proposition \ref{REALPTAU} and by the composition theorem for such operators in \cite{BoGu}. In the 
case of $ \Pi_{\tau} g_{\tau}^t \sigma_{t, \tau} \Pi_{\tau}$ it follows similarly from the fact that $\Pi_{\tau}$
is a Toeplitz operator and from the simple composition with pullback by $g_{\tau}^t$ (see \S \ref{SZEGO}). 

 To compute the symbols we use Proposition \ref{TILDEV}.  On the principal symbol level it implies that
$$\sigma_{\Pi_{\tau}} \circ \sigma_{\tau, -t} \sigma_{g^{-t} \Pi_{\tau} g^t} \sigma_{t, \tau}
\circ \sigma_{\Pi_{\tau}} = \sigma_{\Pi_{\tau}} \leftrightarrow  |\sigma_{\tau, t} |^2 
\sigma_{\Pi_{\tau}} \circ \sigma_{g^{-t} \Pi_{\tau} g^t} 
\circ \sigma_{\Pi_{\tau}} = \sigma_{\Pi_{\tau}} .$$

Then
$$\sigma_{\Pi_{\tau}} \circ \sigma_{g^{-t} \Pi_{\tau} g^t} 
\circ \sigma_{\Pi_{\tau}}  = |\langle e_{\Lambda_t}, e_{\Lambda} \rangle|^2. $$
It follows that 
$$ |\sigma_{\tau, t} |^2  =  |\langle e_{\Lambda_t}, e_{\Lambda} \rangle|^{-2}. $$
There is no unique solution $\sigma_{\tau, t}$ but we can choose

 \end{proof}


\section{\label{INT} Restriction to geodesics I: Intertwining relations}

The purpose of this section is to prove that the restriction $\gamma_{x, \xi}^{\tau*}$ intertwines translation
on $\R$ and translation by the geodesic flow on $\partial M_{\tau}$. There are several further  intertwining relations of
this kind, both infinitesimal and global.  The one we need for the proof of Proposition \ref{LL} is a $T^* T$ version on $\partial M_{\tau}$,
and so it is emphasized in the following:

\begin{mainprop}  \label{INTERTWINING2}  On $L^2(\partial M_{\tau})$, we have

$$V_{\tau}^{*t} (\gamma_{x, \xi}^{\tau})^{**}  Op_{\lambda_j}(a) (\gamma_{x, \xi}^{\tau})^*  V_{\tau}^t
\simeq \Pi_{\tau}  ((\gamma_{x, \xi}^{\tau})^* )^* T_t^* Op_{\lambda_j}(a)  T_t (\gamma_{x, \xi}^{\tau})^* \Pi_{\tau},$$
where $\simeq$ means that both sides belong to the same class of Fourier integral operators and have the same
principal symbols.  
.

\end{mainprop}

By adding lower order terms to the symbols we can arrange the left side to equal the right modulo Toeplitz
smoothing operators.  We begin by using Proposition \ref{VT} to show that
Proposition \ref{INTERTWINING2} is equivalent to
\begin{lem} \label{EQUIV}  We have,
$$\Pi_{\tau} ((\gamma_{x, \xi}^{\tau})^*)^*  T^{t*} Op_{\lambda}(a) T^t (\gamma_{x, \xi}^{\tau})^* \Pi_{\tau}  \simeq
\Pi_{\tau} \overline{\sigma_{\tau t} }g^{-t}  \Pi_{\tau} 
(\gamma_{x, \xi}^{\tau})^{**}  Op_{\lambda_j}(a) (\gamma_{x, \xi}^{\tau})^*
\Pi_{\tau} \sigma_{\tau t} g^{t}  \Pi_{\tau} ,$$
in the sense that the operators on each side are Hermite Fourier integral operators (Toeplitz operators) adapted
to the symplectic  embedding $\iota_{x, \xi}^{\tau}: \R_+ \gamma_{x, \xi}^{\tau} \subset \Sigma_{\tau}$  and having
the same principal symbol.
\end{lem}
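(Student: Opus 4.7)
The plan is to prove the lemma by verifying (i) both sides are elliptic Hermite Fourier integral operators adapted to the symplectic embedding $\iota_{x,\xi}^{\tau}\colon \R_{+} \gamma_{x,\xi}^{\tau} \hookrightarrow \Sigma_{\tau}$, and (ii) their principal symbols agree on this embedding. The essential geometric input is Proposition \ref{IMPID}, which gives the equivariance $g^{t}_{\tau} \circ \gamma_{x,\xi}^{\tau} = \gamma_{x,\xi}^{\tau} \circ T^{t}$ of the Hamilton flow of $\sqrt{\rho}$ on $\partial M_{\tau}$ with translation on $\R$ along the parametrized geodesic; dualizing to pullback operators yields the operator-level intertwining $T^{t*} (\gamma_{x,\xi}^{\tau})^{*} = (\gamma_{x,\xi}^{\tau})^{*} g^{t}_{\tau}$ and its $L^{2}$-adjoint counterpart.

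For (i), each side is a finite composition of Hermite FIOs and pseudodifferential operators. The Szegő projector $\Pi_{\tau}$ is Hermite adapted to the identity on $\Sigma_{\tau}$; the restriction $(\gamma_{x,\xi}^{\tau})^{*}$ and its $L^{2}$-adjoint are FIOs adapted to $\iota_{x,\xi}^{\tau}$; the translations $T^{t}$ on $\R$ and the flow pullbacks $g^{t}_{\tau}$ on $\partial M_{\tau}$ are unitary; and $Op_{\lambda}(a)$ together with the scalar multiplier $\sigma_{\tau,t}$ are pseudodifferential. The composition theorems of Boutet de Monvel and Guillemin (Theorems~3.4 and~7.5 of \cite{BoGu}) then place each side in the Hermite FIO class adapted to $\iota_{x,\xi}^{\tau}$.

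For (ii), I would compute the principal symbols on the common canonical embedding. On the left-hand side, $T^{t*} Op_{\lambda}(a) T^{t}$ is the translated pseudodifferential operator $Op_{\lambda}(a_{t})$ with $a_{t}(s,\sigma) = a(s-t,\sigma)$; its pullback-sandwiched Toeplitz compression has principal symbol $a(s-t, \sigma)$ times $\sigma_{\Pi_{\tau}}$ on $\R_{+} \gamma_{x,\xi}^{\tau}$. On the right-hand side, the middle block $\Pi_{\tau} ((\gamma_{x,\xi}^{\tau})^{*})^{*} Op_{\lambda}(a) (\gamma_{x,\xi}^{\tau})^{*} \Pi_{\tau}$ contributes $a(s,\sigma) \sigma_{\Pi_{\tau}}$, while the outer Toeplitz factors $\Pi_{\tau} \sigma_{\tau,t} g^{t}_{\tau} \Pi_{\tau}$ and $\Pi_{\tau} \overline{\sigma_{\tau,t}} g^{-t}_{\tau} \Pi_{\tau}$ translate the base point by $\pm t$ along the flow and contribute the scalar $|\sigma_{\tau,t}|^{2}$ multiplied by the Gaussian overlap $|\langle e_{\Lambda}, e_{\Lambda_{t}} \rangle|^{2}$ of \eqref{SYMBPIT}. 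The normalization $\sigma_{\tau,t} = \langle e_{\Lambda_{t}}, e_{\Lambda} \rangle^{-1}$ of \eqref{sigmataut} is defined precisely so that $|\sigma_{\tau,t}|^{2} |\langle e_{\Lambda}, e_{\Lambda_{t}}\rangle|^{2} = 1$, so this Gaussian cancels and the right-hand side symbol reduces to $a(s-t,\sigma) \sigma_{\Pi_{\tau}}$, matching the left-hand side.

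The main obstacle I expect is the symbol bookkeeping: since $g^{t}_{\tau}$ is not holomorphic, $\Pi_{\tau}$ and $g^{t}_{\tau}$ do not commute, and the Szegő conjugation $\Pi_{\tau} g^{\pm t}_{\tau} \Pi_{\tau}$ replaces the CR ground state $e_{\Lambda}$ by the perturbed state $e_{\Lambda_{\pm t}}$, producing the Gaussian overlap factor. Verifying that this overlap is exactly compensated by $\sigma_{\tau,t}$ of Proposition \ref{VT} amounts to a direct application of the Toeplitz symbol calculus formula \eqref{SYMBPIT}. Once the principal symbols match on the common canonical embedding, the equality of Hermite FIOs modulo lower-order Toeplitz terms closes the argument.
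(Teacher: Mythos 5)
Your proposal is correct and follows essentially the same route as the paper: the intertwining $g^t_\tau\circ\gamma^\tau_{x,\xi}=\gamma^\tau_{x,\xi}\circ T^t$ (dualized to operators via \eqref{OBVIOUS}), placement of both sides in the Hermite/Toeplitz FIO class adapted to $\iota^\tau_{x,\xi}$ by the composition theorems of \cite{BoGu}, and matching of principal symbols via the cancellation $\overline{\sigma_{\tau t}}\,\langle e_{\Lambda},e_{\Lambda_t}\rangle = 1$ built into the normalization \eqref{sigmataut}. The one point you gloss over is the mechanism the paper uses to justify applying the Toeplitz symbol calculus across the translated projector $\Pi_\tau^t = g^{-t}\Pi_\tau g^t$: after inserting $g^t g^{-t}$ factors and using \eqref{OBVIOUS}, the paper must insert additional copies of $\Pi_\tau$ next to $\Pi_\tau^t$ (display \eqref{EQUIV6}), and justifies that this does not change the principal symbol by writing $\Pi_\tau\sigma\Pi_\tau^t = \Pi_\tau\sigma\Pi_\tau^t\Pi_\tau + \Pi_\tau\sigma\Pi_\tau^t\Box_b^\tau G_\tau$ and showing the second term is of order $-1$ via the commutator $[\sigma\Pi_\tau^t,\Box_b^\tau]$. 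This is what rigorously reduces the symbol computation to the rank-one Gaussian overlap \eqref{SYMBPIT} that you invoke; your write-up asserts that the overlaps appear and cancel but leaves this reduction implicit.
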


Indeed the equivalence follows immediately from Proposition \ref{VT} and from the fact that
 \begin{equation}  \label{SYMEMBED} (\gamma_{x, \xi}^{\tau})^* \Pi_{\tau} 
: \ocal^0(\partial M_{\tau}) \to \ocal^{-1}_{loc}(\partial S_{\tau})  \end{equation} is the adjoint of a Toeplitz
Hermite Fourier integral operator  adapted to the symplectic embedding
 $\R_+ \gamma_{x, \xi}^{\tau} \subset \Sigma_{\tau}$. The latter statement is proved in 
Theorem 9.1 of \cite{BoGu}. All of the hypotheses of  that theorem  are satisfied by the map
$$\gamma_{x, \xi}^{\tau} \times \gamma_{x, \xi}^{\tau}: \R \times \R \to \partial M_{\tau} \times \partial M_{\tau}.$$
It follows that 
\begin{equation} \label{T*T} (\gamma_{x, \xi}^{\tau})^* \Pi_{\tau}
(\gamma_{x, \xi}^{\tau})^{**}: \ocal^0(\partial M_{\tau}) \to \ocal^{0}(\partial M_{\tau}) \end{equation}
 is a Toeplitz
operator  quantizing the symplectic sub-cone
 $\R_+ \gamma_{x, \xi}^{\tau}  \subset \Sigma_{\tau}$, i.e. having the real points of its canonical relation
along $\Delta_{\R_+ \gamma_{x, \xi}^{\tau} \times \R_+ \gamma_{x, \xi}^{\tau}}.$ It has a paramatrix
of the form,
$$\Pi_{\tau}(\gamma_{x, \xi}^{\tau}(t), \gamma_{x, \xi}^{\tau}(s)) = \int_0^{\infty} e^{i \theta 
\psi((\gamma_{x, \xi}^{\tau}(t), \gamma_{x, \xi}^{\tau}(s))} A((\gamma_{x, \xi}^{\tau}(t), \gamma_{x, \xi}^{\tau}(s)), \theta) d \theta, $$
obtained by pulling back the parametrix \eqref{PITAU}.

Furthermore, Theorem 9.1 of \cite{BoGu} assets that,  for any Toeplitz operator $\Pi_{\tau} Q \Pi_{\tau}$ of order $r$, 
  $$(\gamma_{x, \xi}^{\tau})^* \Pi_{\tau} Q \Pi_{\tau} \;\; \mbox{is a Toeplitz operator of order r on } \; \partial S_{\tau}, $$
whose principal symbol is the restriction of $\sigma_Q$ to $\R_+ \gamma_{x, \xi}^{\tau}$.

\subsection{Proof of Lemma \ref{EQUIV}}

\begin{proof}  
The right side is the same
as
\begin{equation} \label{EQUIV2} \begin{array}{l}
\Pi_{\tau} \overline{\sigma_{\tau t} }g^{-t}  \Pi_{\tau}  g^t g^{-t}
(\gamma_{x, \xi}^{\tau})^{**}  Op_{\lambda_j}(a) (\gamma_{x, \xi}^{\tau})^* g^t g^{-t}
\Pi_{\tau}  g^t g^{-t}\sigma_{\tau t} g^{t}  \Pi_{\tau}  \end{array} \end{equation} We use the obvious intertwining
relations
\begin{equation} \label{OBVIOUS} T^t  (\gamma_{x, \xi}^{\tau})^* =   (\gamma_{x, \xi}^{\tau})^* g_{\tau}^t, 
\;\;\;\;  (\gamma_{x, \xi}^{\tau})^{**}  T^{-t} = g^{-t} (\gamma_{x, \xi}^{\tau})^{**}    \end{equation}
 to get
\begin{equation} \label{SIMP}  g^{-t}
(\gamma_{x, \xi}^{\tau})^{**}  Op_{\lambda_j}(a) (\gamma_{x, \xi}^{\tau})^* g^t 
= 
(\gamma_{x, \xi}^{\tau})^{**}  T_t^* Op_{\lambda_j}(a) T^t (\gamma_{x, \xi}^{\tau})^*,  \end{equation}
and thus it suffices to prove 
\begin{equation} \label{EQUIV3}
\Pi_{\tau} ((\gamma_{x, \xi}^{\tau})^*)^* T^{t*} Op_{\lambda}(a) T^t (\gamma_{x, \xi}^{\tau})^* \Pi_{\tau} \simeq 
\Pi_{\tau} \overline{\sigma_{\tau t} }g^{-t}  \Pi_{\tau}  g^t
(\gamma_{x, \xi}^{\tau})^{**}  T_t^* Op_{\lambda_j}(a) T^t (\gamma_{x, \xi}^{\tau})^*
 g^{-t}
\Pi_{\tau}  g^t g^{-t}\sigma_{\tau t} g^{t}  \Pi_{\tau} . \end{equation}
Now $  T_t^* Op_{\lambda_j}(a) T^t$ appears on both sides of the purported formula and is a completely
general pseudo-differential operator. Hence \eqref{EQUIV3} is equivalent to
\begin{equation} \label{EQUIV4}
\Pi_{\tau} ((\gamma_{x, \xi}^{\tau})^* )^* Op_{\lambda}(b)  (\gamma_{x, \xi}^{\tau})^* \Pi_{\tau}  \simeq 
\Pi_{\tau} \overline{\sigma_{\tau t} }g^{-t}  \Pi_{\tau}  g^t
(\gamma_{x, \xi}^{\tau})^{**}  Op_{\lambda}(b) (\gamma_{x, \xi}^{\tau})^*
 g^{-t}
\Pi_{\tau}  g^t g^{-t}\sigma_{\tau t} g^{t}  \Pi_{\tau} , \end{equation}
for any $Op_{\lambda}(b)$.

We further observe that $g^{-t} \sigma_{\tau t} g^t = \sigma_{\tau t} \circ g^{-t}$. 
We thus need to show that
\begin{equation} \label{EQUIV5}
\Pi_{\tau} ((\gamma_{x, \xi}^{\tau})^*)^* Op_{\lambda}(b)  (\gamma_{x, \xi}^{\tau})^* \Pi_{\tau}  \simeq 
\Pi_{\tau} \overline{\sigma_{\tau t} } \Pi_{\tau} ^t
(\gamma_{x, \xi}^{\tau})^{**}  Op_{\lambda}(b) (\gamma_{x, \xi}^{\tau})^*
\Pi_{\tau}^t (\sigma_{\tau t} \circ g^{-t} ) \Pi_{\tau} . \end{equation}

To prove this, we show that

\begin{equation} \label{EQUIV6} \begin{array}{l} 
\Pi_{\tau} \overline{\sigma_{\tau t} } \Pi_{\tau} ^t
(\gamma_{x, \xi}^{\tau})^{**}  Op_{\lambda}(b) (\gamma_{x, \xi}^{\tau})^*
\Pi_{\tau}^t (\sigma_{\tau t} \circ g^{-t} ) \Pi_{\tau} \\ \\  \simeq
\Pi_{\tau} \overline{\sigma_{\tau t} } \Pi_{\tau} ^t 
\Pi_{\tau} 
(\gamma_{x, \xi}^{\tau})^{**}  Op_{\lambda}(b) (\gamma_{x, \xi}^{\tau})^*
\Pi_{\tau}  \Pi_{\tau}^t (\sigma_{\tau t} \circ g^{-t} ) \Pi_{\tau}  .
\end{array} \end{equation}
Here, we inserted an extra factor of $\Pi_{\tau}$ to the right of the first $\Pi_{\tau}^t$ and to the left of the second. 
To prove \eqref{EQUIV6} we use the $\Box_{b}^{\tau} = \dbar_b^{\tau *} \dbar_b^{\tau}$ operator on $\partial M_{\tau}$,
where $\dbar^{\tau}_b$ is the Cauchy-Riemann operator associated to the CR structure of $\partial M_{\tau}$
as a real hypersurface in $M_{\tau}$. Thus, $\Pi_{\tau}$ is the orthogonal projection onto the kernel of $\Box_b^{\tau}$. 
Since $\partial M_{\tau}$ is strictly pseudo-convex, there is a spectral gap between the $0$ eigenvalue of 
$\Box_b^{\tau}$ and its first positive eigenvalue.  Thus there exists a pseudo-differential Green's operator
$G_{\tau}$ of order $-2$ such that
$$ \Box_b^{\tau} G_{\tau} = Id - \Pi_{\tau}. $$
We use this to write 
$$\Pi_{\tau} \sigma \Pi_{\tau}^t = \Pi_{\tau} \sigma \Pi_{\tau}^t \Pi_{\tau} + 
 \Pi_{\tau} \sigma \Pi_{\tau}^t \Box_b^{\tau} G_{\tau}. $$
We then observe that the principal symbol of the zeroth order Toeplitz operator
$\Pi_{\tau} \sigma \Pi_{\tau}^t \Box_b^{\tau} G_{\tau}$ vanishes. Indeed, since $\Pi_{\tau} \Box_b^{\tau} = 0$
it is the same as $\Pi_{\tau} [\sigma \Pi_{\tau}^t, \Box_b^{\tau}] G_{\tau}$. But the commutator
$ [\sigma \Pi_{\tau}^t, \Box_b^{\tau}] $ is a Toeplitz operator adapted to the identity map on $\Sigma_{\tau}$
with vanishing principal symbol, since scalar functions commute with the symbol of $\Pi_{\tau}^t$.
It follows that 
$\Pi_{\tau} \sigma \Pi_{\tau}^t \Box_b^{\tau} G_{\tau}$ is of order $-1$, and does not contribute to the principal
symbol of the right side of \eqref{EQUIV6}, proving the claim. 

Thus we reduce to proving that the symbol of  right side of \eqref{EQUIV6} is the same as the symbol of the left side
of \eqref{EQUIV5} .  But this follows from the fact that $$\Pi_{\tau} \overline{\sigma_{\tau t} } \Pi_{\tau} ^t \Pi_{\tau}
\simeq \Pi_{\tau}, \;\;\; \Pi_{\tau} \sigma_{\tau t}  \circ g^{-t}  \Pi_{\tau} ^t \Pi_{\tau} \simeq \Pi_{\tau}. $$
Indeed, the symbol of the left side of the first expression equals $$\overline{\sigma_{\tau t}}  \sigma_{\Pi_{\tau} } \sigma_{\Pi_{\tau}^t}
\sigma_{ \Pi_{\tau}} = \overline{\sigma_{\tau t}}  \langle e_{\Lambda_{\tau}} , e_{\Lambda_{\tau}}^t \rangle 
\sigma_{\Pi_{\tau}} = \sigma_{\Pi_{\tau}} $$ since  the numerical factor cancels by \eqref{sigmataut}.
The symbol on the left side of the second is
 $$\sigma_{\tau t}  \circ g^{-t}  \langle e_{\Lambda_{\tau}} , e_{\Lambda_{\tau}}^t \rangle 
\sigma_{\Pi_{\tau}} = \sigma_{\Pi_{\tau}} .$$
Indeed, 
$g^{-t} \Lambda_t = \Lambda$ so
$$\sigma_{\tau t}  \circ g^{-t}   =   \langle e_{\Lambda}, e_{\Lambda_t} \rangle^{-1}
= \overline{\sigma_{\tau t}}, $$
and so the claim follows as for the first expression. This completes the proof of Lemma \ref{EQUIV}
and therefore of Proposition \ref{INTERTWINING2}.

\end{proof}

Differentiating Lemma \ref{EQUIV} in $t$ and setting $t = 0$ gives the infinitesimal version: 
\begin{cor} \label{INF}  Let $ \sigma_0 = \frac{d}{dt}|_{t = 0}  \langle   e_{\Lambda_t},
e_{\Lambda} \rangle^{-1}.  $  There exists a  pseudo-differential operator $ R_{-1}$  of order
 $-1$  so that,

$$\Pi_{\tau} ((\gamma_{x, \xi}^{\tau})^*)^*   Op_{\lambda}([\frac{\partial}{\partial t}, a])  (\gamma_{x, \xi}^{\tau})^* \Pi_{\tau}  \simeq
\Pi_{\tau}  [\Xi_{\sqrt{\rho}}  + \sigma_0 + R_{-1}),
(\gamma_{x, \xi}^{\tau})^{**}  Op_{\lambda_j}(a) (\gamma_{x, \xi}^{\tau})^*]
\Pi_{\tau} .$$

\end{cor}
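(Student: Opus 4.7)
The plan is to obtain Corollary~\ref{INF} by differentiating the Toeplitz operator identity of Lemma~\ref{EQUIV} in the parameter $t$ and evaluating at $t=0$. At $t=0$ we have $g_\tau^0=\mathrm{id}$, $T^0=\mathrm{id}$, and $\sigma_{\tau,0}=\langle e_{\Lambda_0},e_{\Lambda}\rangle^{-1}=1$ since $\Lambda_0=\Lambda$ and $e_\Lambda$ is unit normalized, so both sides of Lemma~\ref{EQUIV} collapse to the single operator $\Pi_\tau((\gamma_{x,\xi}^\tau)^*)^* Op_\lambda(a)(\gamma_{x,\xi}^\tau)^*\Pi_\tau$. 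Because Lemma~\ref{EQUIV} is an equality of Toeplitz Hermite Fourier integral operators at the level of principal symbols, the $t$-family is smooth and the error term has $t$-derivative at $0$ of one order lower than the principal; this lower-order piece is precisely what will be absorbed into the pseudodifferential remainder $R_{-1}$.

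For the left hand side, $T^t$ is the unitary translation group $T^t f(s)=f(s+t)$, and conjugation by it on a pseudodifferential operator shifts the symbol parameter, so that
$$\frac{d}{dt}\Big|_{t=0} T^{t*} Op_\lambda(a) T^t \;=\; Op_\lambda\!\bigl([\partial_t, a]\bigr),$$
understood as symbol differentiation in the parameter. Since $\Pi_\tau$ and $(\gamma_{x,\xi}^\tau)^*$ are $t$-independent, the derivative of the LHS of Lemma~\ref{EQUIV} at $t=0$ is exactly the LHS of Corollary~\ref{INF}.

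For the right hand side, abbreviate $C(t):=\sigma_{\tau,t}\,g_\tau^t$ and $B:=(\gamma_{x,\xi}^\tau)^{**} Op_\lambda(a)(\gamma_{x,\xi}^\tau)^*$, so that the RHS of Lemma~\ref{EQUIV} reads $\Pi_\tau\overline{C(t)}\Pi_\tau B\Pi_\tau C(t)\Pi_\tau$. By Proposition~\ref{IMPID} and Lemma~\ref{GTAU}, $\frac{d}{dt}|_{t=0}g_\tau^t=\Xi_{\sqrt{\rho}}$ as a derivation on $\partial M_\tau$; by definition $\frac{d}{dt}|_{t=0}\sigma_{\tau,t}=\sigma_0$; and from the explicit Gaussian overlap formula of \cite{Z3} combined with the unit normalization $|\langle e_\Lambda,e_\Lambda\rangle|=1$ one reads off $|\sigma_{\tau,t}|^2=1+O(t^2)$, forcing $\operatorname{Re}\sigma_0=0$ and $\overline{\sigma_0}=-\sigma_0$. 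Hence $C'(0)=\Xi_{\sqrt{\rho}}+\sigma_0$ and $\overline{C(\cdot)}'(0)=-(\Xi_{\sqrt{\rho}}+\sigma_0)$. The product rule, together with $C(0)=\overline{C(0)}=1$ and $\Pi_\tau^2=\Pi_\tau$, then yields, modulo lower-order Toeplitz operators collected into $R_{-1}$,
$$\frac{d}{dt}\Big|_{t=0}\Pi_\tau\overline{C(t)}\Pi_\tau B\Pi_\tau C(t)\Pi_\tau \;=\; \Pi_\tau\bigl[\Xi_{\sqrt{\rho}}+\sigma_0,\,B\bigr]\Pi_\tau,$$
which is the RHS of Corollary~\ref{INF} up to the $R_{-1}$ term.

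The principal obstacle is the $R_{-1}$ bookkeeping. Lemma~\ref{EQUIV} is only a principal-symbol identity in the Toeplitz calculus of \cite{BoGu}, so differentiation in $t$ must be tracked carefully through the symbol asymptotics to confirm that the residual term is an honest pseudodifferential operator of order $-1$ on $\partial M_\tau$ rather than a genuine Hermite contribution, and that each passage $\Pi_\tau X\Pi_\tau Y\Pi_\tau\mapsto \Pi_\tau XY\Pi_\tau$ costs only an order $-1$ correction; the latter follows from the $\Box_b^\tau$ Green's-operator argument already employed in the proof of Lemma~\ref{EQUIV}. The delicate structural input is verifying $\operatorname{Re}\sigma_0=0$ via the Gaussian-overlap formula, since this is precisely what causes the surviving first-order contribution to assemble into a commutator with $\Xi_{\sqrt{\rho}}+\sigma_0$ rather than an anticommutator.
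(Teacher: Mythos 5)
Your proposal is correct and follows essentially the same route as the paper: differentiate the intertwining identity of Lemma~\ref{EQUIV} at $t=0$, read off the generator $\Pi_\tau(\Xi_{\sqrt{\rho}}+\sigma_0)\Pi_\tau$ of the Toeplitz wave group, and apply the product rule together with the $\Box_b^\tau$ Green's-operator trick to collect the subprincipal contributions into $R_{-1}$. The paper's own proof is more terse (it simply records $\frac{d}{dt}|_{t=0}\Pi_\tau g^t\sigma_{t,\tau}\Pi_\tau=\Pi_\tau(\Xi_{\sqrt{\rho}}+\sigma_0)\Pi_\tau$ and identifies $\sigma_0$ via \eqref{sigmataut}); your explicit verification that $\operatorname{Re}\sigma_0=0$ -- which is what forces the result to be a commutator rather than a symmetrized product -- is a worthwhile addition, and can be obtained even more simply than from the Gaussian overlap formula by observing that $|\langle e_{\Lambda_t},e_\Lambda\rangle|^2\le 1$ attains its maximum at $t=0$, so its first $t$-derivative vanishes there. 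Do double-check your sign in the claim $\frac{d}{dt}|_{t=0}T^{t*}Op_\lambda(a)T^t=Op_\lambda([\partial_t,a])$: with $T^tf(s)=f(s+t)$ and $T^{t*}=T^{-t}$, conjugation yields multiplication by $a(\cdot-t)$, whose $t$-derivative at $0$ is $-a'$; a compensating sign then appears in $\frac{d}{dt}|_{t=0}\Pi_\tau\overline{C(t)}\Pi_\tau B\Pi_\tau C(t)\Pi_\tau=-\Pi_\tau[\Xi_{\sqrt{\rho}}+\sigma_0,B]\Pi_\tau$, so the two errors cancel and the stated identity is recovered, but the intermediate steps as written carry a consistent sign slip.
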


\begin{proof}
$$\frac{d}{dt}_{t = 0} \Pi_{\tau} g^t \sigma_{t, \tau} \Pi_{\tau} = \Pi_{\tau} (\Xi_{\sqrt{\rho}} + \frac{d}{dt}|_{t = 0}
\sigma_{t, \tau} ) \Pi_{\tau}. $$ By \eqref{sigmataut}, the second term is
$$ \frac{d}{dt}|_{t = 0}  \sigma_{\tau, t} =  \frac{d}{dt}|_{t = 0}    \langle e_{\Lambda_t}, e_{\Lambda} \rangle^{-1} .
$$

\end{proof}


 Alternatively, one can  construct a paramatrix for $ (\gamma_{x, \xi}^{\tau})^*  \Pi_{\tau} $ by pulling
back the parametrix \eqref{PITAU}, to get 
$$\Pi_{\tau}(\gamma_{x, \xi}(t),\zeta) = \int_0^{\infty} e^{i \theta \psi((\gamma_{x, \xi}^{\tau}(t), \zeta)} A((\gamma_{x, \xi}^{\tau}(t), \zeta, \theta) d \theta. $$ Applying $\frac{\partial}{\partial t}$ changes the amplitude to 
$$i\theta \frac{\partial}{\partial t} \psi((\gamma_{x, \xi}^{\tau}(t), \zeta)  A + \frac{\partial}{\partial t} A((\gamma_{x, \xi}(t), \zeta, \theta) . $$
The first term is $\partial \psi ((\gamma_{x, \xi}^{\tau}(t), \zeta)  \cdot \gamma_{x, \xi}^{\tau'}(t)$, and on
the diagonal $\partial \psi$ is the contact form, which evaluates to $1$ on  $\gamma_{x, \xi}^{\tau'}(t)$. Similarly,
the symbol of $\Xi_{\sqrt{\rho}}$ evaluates to $\theta$.

\section{  \label{LLPROOF} Lebesgue limits of matrix elements: Proof of Proposition \ref{LL}}

In this section, we use the intertwining relation in  Proposition \ref{INTERTWINING2} to prove that matrix elements  
\begin{equation} \label{ME} \langle a U_j^{I_{\tau}  x, \xi}, U_j^{I_{\tau}, x, \xi} \rangle \end{equation}
of compactly supported multiplication operators on  $\partial S_{\tau}$ with respect to the $L^2$ normalized
pullbacks of  Definition \ref{Ujdef}  are asymptotically invariant
under translation $T_t: L^2(\R) \to L^2(\R)$. It is simplest to show 
\begin{prop} \label{INFINV}  Let $a \in C_0^{\infty}(I_{\tau})$. Then
$$\langle  \frac{\partial}{\partial t} a,  |U_j^{I_{\tau}, x, \xi} |^2 \rangle = o_{I_{\tau}}(1), \;\; \lambda_j \to \infty. $$
\end{prop}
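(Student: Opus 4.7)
The plan is to deduce the claim from the infinitesimal intertwining relation of Corollary \ref{INF} combined with the exact eigenequation $\tilde V_\tau^t u_j = e^{it\lambda_j}u_j$, where $u_j = P^\tau A_\tau \phi_j$ is the orthonormal basis of $\ocal^0(\partial M_\tau)$ provided by Corollary \ref{ONB} and $\tilde V_\tau^t$ is the unitary group of Proposition \ref{TILDEV}. Let $B = \Pi_\tau (\gamma_{x,\xi}^{\tau*})^* M_a \, \gamma_{x,\xi}^{\tau*} \Pi_\tau$, an order-zero Toeplitz operator on $\ocal^0(\partial M_\tau)$, and let $\tilde T$ denote the self-adjoint generator of $\tilde V_\tau^t$, so $\tilde T u_j = \lambda_j u_j$. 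Self-adjointness of $\tilde T$ then forces $\langle[\tilde T, B]u_j, u_j\rangle = 0$ exactly.

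Propositions \ref{VWAVE}--\ref{VT} identify the principal symbol of $\tilde T$ on $\Sigma_\tau$ with that of $\Xi_{\sqrt\rho}+\sigma_0$ appearing in Corollary \ref{INF}; thus $\tilde T - (\Xi_{\sqrt\rho}+\sigma_0)$ is a Toeplitz operator of order $\leq -1$, and so is its commutator against the order-zero $B$. Combining this with the exact vanishing above and with Corollary \ref{INF} yields
\begin{equation*}
\int_\R \partial_t a(s)\,|\gamma_{x,\xi}^{\tau*} u_j(s)|^2\, ds \;=\; \bigl\langle [\Xi_{\sqrt\rho}+\sigma_0, B]u_j, u_j\bigr\rangle + O(\lambda_j^{-1}) \;=\; O(\lambda_j^{-1}),
\end{equation*}
because the matrix element on the right differs from $\langle[\tilde T, B]u_j, u_j\rangle = 0$ only by commutators with an operator of order $\leq -1$ evaluated on the $L^2$-normalized $u_j$.

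From the pseudodifferential structure of $A_\tau$ reviewed in \S\ref{ATAUsect}, one has $u_j = \lambda_j^{(m-1)/4} e^{-\tau\lambda_j}\phi_j^\C|_{\partial M_\tau}$ modulo lower-order corrections, so the left-hand side above agrees up to lower order with $\lambda_j^{(m-1)/2} e^{-2\tau\lambda_j} \int_\R \partial_t a(s)\,|\gamma_{x,\xi}^{\tau*}\phi_j^\C(s)|^2\, ds$. Dividing through by the common factor $\lambda_j^{(m-1)/2} e^{-2\tau\lambda_j} \,\|\gamma_{x,\xi}^{\tau*}\phi_j^\C\|_{L^2(I_\tau)}^2$ therefore converts the identity to
\begin{equation*}
\int_{I_\tau}\partial_t a(s)\,|U_j^{I_\tau, x, \xi}(s)|^2\, ds \;=\; \frac{O(\lambda_j^{-1})}{\|\gamma_{x,\xi}^{\tau*} u_j\|_{L^2(I_\tau)}^2}.
\end{equation*}

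The main obstacle is controlling the denominator so as to genuinely extract $o(1)$: one needs $\|\gamma_{x,\xi}^{\tau*} u_j\|_{L^2(I_\tau)}^2$ to decay no faster than $\lambda_j^{-1}$. The nonvanishing hypothesis $\gamma_{x,\xi}^{\tau*}\phi_j^\C\not\equiv 0$ guarantees positivity at each $j$, and the quantitative lower bound can be supplied either from the pointwise Weyl law (Proposition \ref{PTAULWL}) via an averaging argument that converts the orthonormality $\|u_j\|_{L^2(\partial M_\tau)}=1$ into a lower bound along a typical geodesic arc, or by running the entire argument directly on the normalized vectors $U_j^{I_\tau, x, \xi}$, which form a uniformly bounded family of probability densities on $I_\tau$ and therefore automatically absorb the numerator's $O(\lambda_j^{-1})$ into $o(1)$.
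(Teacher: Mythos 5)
Your argument reproduces the paper's first proof of Proposition \ref{INFINV} essentially step for step: the translation derivative on $\partial S_\tau$ is rewritten via Corollary \ref{INF} as a commutator against the generator of the complexified wave group, the exact eigenequation annihilates the leading part of that commutator, and what survives is a commutator with a pseudo-differential operator of order $-1$. Two places where you differ from the text: (i) you make the vanishing of the leading commutator explicit by passing to the genuinely unitary group $\tilde V_\tau^t$ of Proposition \ref{TILDEV}, whose eigenvectors are the orthonormal $u_j = P^\tau A_\tau\phi_j$, whereas the paper works with $u_j^\tau = \phi_j^\C/\|\phi_j^\C\|_{L^2(\partial M_\tau)}$ and drops the same term without comment — your packaging is a legitimate and slightly cleaner version of the same mechanism; (ii) your claim that the surviving commutator matrix element is $O(\lambda_j^{-1})$ is a bit too strong as stated. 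The Toeplitz operator $B=\Pi_\tau(\gamma_{x,\xi}^{\tau*})^*M_a\gamma_{x,\xi}^{\tau*}\Pi_\tau$ is a Hermite FIO adapted to an isotropic embedding, not a $\Psi$DO with $\lambda$-independent operator norm, so after Cauchy--Schwarz the bound one actually gets (and the paper records) is $\lambda_j^{-1}\|\gamma_{x,\xi}^{\tau*}u_j\|_{L^2(I)}$, not $\lambda_j^{-1}$.

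You correctly flag the genuine gap in this line of argument: after dividing by $\|\gamma_{x,\xi}^{\tau*}u_j\|_{L^2(I_\tau)}^2$ one is left with $\lambda_j^{-1}/\|\gamma_{x,\xi}^{\tau*}u_j\|_{L^2(I_\tau)}$, and a lower bound on the restricted mass is needed to convert this to $o(1)$; the bare nonvanishing of $\gamma_{x,\xi}^*\phi_j$ is not quantitative. Of your two proposed resolutions, the first (averaging via the pointwise Weyl law, Proposition \ref{PTAULWL}) is exactly what the paper's second proof does through Lemma \ref{DENSITY1}, which controls the restricted mass along density-one subsequences. The second — that running the argument on the $L^2(I_\tau)$-normalized $U_j^{I_\tau,x,\xi}$ "automatically absorbs the numerator's $O(\lambda_j^{-1})$" — is circular: the input estimate $\|R_{-1}u\|\leq\lambda_j^{-1}\|u\|_{L^2(\partial M_\tau)}$ is only useful against $L^2(\partial M_\tau)$-normalized vectors, and replacing $u_j$ by its $L^2(I_\tau)$-normalization rescales $\|u_j\|_{L^2(\partial M_\tau)}$ by $\|\gamma_{x,\xi}^{\tau*}u_j\|_{L^2(I_\tau)}^{-1}$, reintroducing precisely the factor you are trying to eliminate. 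The Weyl-law averaging is therefore the route that actually closes the proof, and it is the one the paper takes.
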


The invariance property scales with the  normalization of the pullbacks.   We therefore use the preliminary normalization
\begin{equation} u_j^{\tau} = \frac{\phi_j^{\C}}{||\phi_j^{\C}||_{L^2(\partial M_{\tau})}}
 \end{equation}
and later divide again by the mass of the pullback on $I_{\tau}$.

\begin{proof}   By differentiating Proposition \ref{VT} in $t$ (and setting $t = 0$),  there exists a pseudo-differential
operator $\sigma_0$ on $\partial M_{\tau}$ of order zero so that,
\begin{equation}  \Pi_{\tau} (\Xi_{\sqrt{\rho}} + \sigma_{0} )\Pi_{\tau} u_j^{\tau} = i \lambda_j u_j^{\tau}. \end{equation}
We then use Corollary \ref{INF} to obtain,

\begin{equation} \begin{array}{lll}   \langle (\frac{\partial}{\partial t} a )   (\gamma_{x, \xi}^{\tau})^* u_j^{\tau} ,    (\gamma_{x, \xi}^{\tau})^* u_j^{\tau} \rangle_{\R}  & = &\langle [\frac{\partial}{\partial t} , a]    (\gamma_{x, \xi}^{\tau})^* u_j^{\tau} ,    (\gamma_{x, \xi}^{\tau})^* u_j^{\tau} \rangle_{\R} 
\\ &&\\ & = & \langle
 [\Xi_{\sqrt{\rho}}  + \sigma_0 + R_{-1}),
(\gamma_{x, \xi}^{\tau})^{**}  Op_{\lambda_j}(a) (\gamma_{x, \xi}^{\tau})^*] u_j^{\tau} , u_j^{\tau}  \rangle_M \\ && \\ 
& = &\langle
 [ R_{-1},
(\gamma_{x, \xi}^{\tau})^{**}  Op_{\lambda_j}(a) (\gamma_{x, \xi}^{\tau})^*] u_j^{\tau} , u_j^{\tau}  \rangle_M
\end{array} \end{equation}
where $R_{-1}$ 
is a  pseudo-differential operator of order $-1$.  We then apply the Cauchy-Schwartz
inequaltity and the fact that
$$||R_{-1} u_j^{\tau} ||_{L^2(\partial M_{\tau}} \leq \lambda_j^{-1} ||u_j^{\tau}||_{L^2(\partial M_{\tau}}$$ to bound the final expression by 
$$\begin{array}{lll}   |
|\langle
 [ R_{-1},
(\gamma_{x, \xi}^{\tau})^{**}  Op_{\lambda_j}(a) (\gamma_{x, \xi}^{\tau})^*] u_j^{\tau} , u_j^{\tau}  \rangle_M| &  \leq & \lambda_j^{-1}
||  \Pi_{\tau} (\gamma_{x, \xi}^{\tau})^*   a_0      (\gamma_{x, \xi}^{\tau})^* \Pi_{\tau}  u_j^{\tau} ||_{L^2(\partial M_{\tau})} \\&&\\&  \leq & \lambda_j^{-1}
||      (\gamma_{x, \xi}^{\tau})^* u_j^{\tau} ||_{L^2(I)},\;\;\; (I = \mbox{supp} a). 
\end{array}$$
If we divide $\gamma_{x,\xi}^{\tau * }u_j^{\tau}$ by $||      (\gamma_{x, \xi}^{\tau})^* u_j^{\tau} ||_{L^2(I)}$
to obtain $U^{I, x, \xi}_j$ then the latter expression tends to zero as $\lambda_j \to \infty$ and 
the sequence $|U^{I, x, \xi}_j|^2$ is a sequence of probability measurs on $I$ whose weak* limits
must be probability measures on $I$ given by constant multiples of $dt$. Of course, there is only
one such probability measure.


\end{proof}

\begin{rem}

If we chose to divide by the $L^2$ norm of  $\gamma_{x,\xi}^{\tau * }u_j^{\tau}$  on a larger  interval $J$, $ I \subset J$,
 then we would end up with the ratio $\frac{||      (\gamma_{x, \xi}^{\tau})^* u_j^{\tau} ||_{L^2(I)}}{||      (\gamma_{x, \xi}^{\tau})^* u_j^{\tau} ||_{L^2(J)}} < 1$ and the same estimate holds.
\end{rem}

We give a second proof using the global propagator. Although it is essentially the same, it is
worth recording because the  interval on which the remainder estimate is made gets shifted by
$t$ units.  
We begin with a  pointwise Weyl law giving almost uniform bounds for restrictions of  `most'   
normalized complexified eigenfunctions.

\begin{lem} \label{DENSITY1} For all  compact intervals $I$ and $\epsilon > 0$ there exists $C_{I, \epsilon} >  0$ and a subsequence $\scal_{I, \epsilon}$ of counting density $D^*(\scal_{I, \epsilon}) \geq 1 - \epsilon$
so that
$$
 \limsup_{j \to \infty, j \in \scal_{I, \epsilon}}  e^{- 2 \tau \lambda_j}
\int_I |\phi_{\lambda_j}^{\C}(\gamma_{x, \xi}(t + i \tau))|^2 dt \leq  C_{I, \epsilon}  |I|. $$
\end{lem}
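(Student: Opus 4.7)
Set $a_j := e^{-2\tau\lambda_j}\int_I|\phi_{\lambda_j}^{\C}(\gamma_{x,\xi}(t+i\tau))|^2\,dt$.  The plan is to sum $a_j$ over $\lambda_j\leq\lambda$, apply the pointwise Weyl law of Proposition \ref{PTAULWL} to bound the resulting sum, and then extract the required density-$(1-\epsilon)$ subsequence by a routine Markov-type argument.

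By Proposition \ref{ISOMETRY}(2), $\sqrt{\rho}(\gamma_{x,\xi}(t+i\tau))=|\tau|$ for every $t\in\R$, so the image of the curve $t\mapsto\gamma_{x,\xi}(t+i\tau)$ lies in the single Grauert tube boundary $\partial M_{|\tau|}$.  Fubini together with the definition \eqref{TCXSPM} of the tempered projection kernel gives
$$\sum_{\lambda_j\leq\lambda}a_j \;=\; \int_I P^{|\tau|}_{[0,\lambda]}\bigl(\gamma_{x,\xi}(t+i\tau),\,\overline{\gamma_{x,\xi}(t+i\tau)}\bigr)\,dt.$$
Since $\sqrt{\rho}\equiv|\tau|$ along the curve, Proposition \ref{PTAULWL} (applicable once $\lambda\geq C/|\tau|$) bounds the integrand uniformly in $t\in I$ by $C_\tau\lambda^{(m+1)/2}$, and hence
$$\sum_{\lambda_j\leq\lambda}a_j \;\leq\; C_\tau\,|I|\,\lambda^{(m+1)/2}.$$

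Combined with the standard Weyl law $N(\lambda):=\#\{j:\lambda_j\leq\lambda\}\sim c_m\,\vol(M,g)\,\lambda^m$, Markov's inequality yields, for every $K>0$,
$$\frac{\#\{j:\lambda_j\leq\lambda,\ a_j>K|I|\}}{N(\lambda)} \;\leq\; \frac{C_\tau}{K\,c_m\vol(M,g)}\,\lambda^{-(m-1)/2}\;+\;o(1).$$
When $m\geq 2$ this tends to zero for any fixed $K$; when $m=1$ it is asymptotically bounded by $C'_\tau/K$.  In either case, given $\epsilon>0$ we choose $K=K(\epsilon)$ so that the exceptional upper density is strictly less than $\epsilon$, set $C_{I,\epsilon}:=K(\epsilon)$, and define $\scal_{I,\epsilon}:=\{j:a_j\leq C_{I,\epsilon}|I|\}$.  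By construction $D^*(\scal_{I,\epsilon})\geq 1-\epsilon$ and $\limsup_{j\to\infty,\,j\in\scal_{I,\epsilon}}a_j\leq C_{I,\epsilon}|I|$, proving the lemma.

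The only substantive input is the uniform pointwise Weyl bound along the complexified geodesic, which Proposition \ref{PTAULWL} supplies directly once one observes (via Proposition \ref{ISOMETRY}) that the curve stays on a single level set of $\sqrt{\rho}$; the density extraction is a standard Chebyshev/Markov argument and I do not anticipate a real obstacle.
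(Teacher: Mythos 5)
Your argument is correct and follows the same route as the paper: integrate the tempered pointwise Weyl law of Proposition \ref{PTAULWL} along the curve $\gamma_{x,\xi}(\cdot+i\tau)\subset\partial M_{|\tau|}$, compare with the Weyl counting function $N(\lambda)$, and apply a Markov/Chebyshev argument to extract the density-$(1-\epsilon)$ subsequence. If anything your version is written more carefully than the paper's, which normalizes the sum by $M(\lambda)\sim\lambda^{(m+1)/2}$ in \eqref{LWL} but then states the Chebyshev lemma in terms of the ordinary Ces\`aro average $\frac1N\sum_{j\le N}X(j)$, leaving implicit exactly the $\lambda^{-(m-1)/2}$ density factor that you display explicitly and that makes the exceptional set have vanishing density when $m\ge 2$.
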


Here, as above, $|I|$ is the Lebesgue measure of $I$.

\begin{proof} 

It follows from Proposition \ref{PWa} that
for all $t \in \R$, 
$$\frac{1}{M(\lambda)} \sum_{j: \lambda_j \in  I_{\lambda}} e^{- 2 \tau \lambda_j}
|\phi_{\lambda_j}^{\C}(\gamma_{x, \xi}(t + i \tau))|^2   = O(\lambda^{-1}), $$
and by integrating over a
compact interval $I \subset \R$ we have
\begin{equation} \label{LWL} \frac{1}{M(\lambda)} \sum_{j: \lambda_j \in  I_{\lambda}} e^{- 2 \tau \lambda_j}
\int_I |\phi_{\lambda_j}^{\C}(\gamma_{x, \xi}(t + i \tau))|^2 dt = |I| + O(\lambda^{-1}), \end{equation}
where the remainders are uniform in $t$ resp. $I$.
We then apply a simple  Chebyshev inequality to \eqref{LWL}. 
For any sequence  $\{X(j)\}$  of positive real numbers satisfying
$$M \{X(j)\}: = \lim_{N \to \infty} \frac{1}{N} \sum_{j = 1}^N X(j)= M, $$
we have
$$\begin{array}{l} D^*\{j:  X(j)  \geq
  M+  R\}  \leq \frac{1}{M + R} M \{ X(j)\}  =
 \frac{M}{M + R}, \\ \\ \;\; \mbox{hence}\;\;
D^*\{j:  X(j) \leq  M+  R\}  \geq
 \frac{R}{M + R}.  \end{array}$$ 
We put
$$X(j) = e^{- 2 \tau \lambda_j} |\phi_{\lambda_j}^{\C}(\gamma_{x, \xi}(t + i \tau))|^2 , \;R = \frac{1}{\epsilon}, \;\;C_{I, \epsilon} = M + R.$$

\end{proof}





Lemma \ref{DENSITY1} implies Proposition \ref{INFINV} as follows:

\begin{proof}

We assume that  $a \in C_c^{\infty}(\R)$. 
By Lemma \ref{EQUIV} ,  we have 
\begin{equation} \begin{array}{lll}   \langle a    (\gamma_{x, \xi}^{\tau})^* u_j^{\tau} ,    (\gamma_{x, \xi}^{\tau})^* u_j^{\tau} \rangle_{\R}  & = &  \langle a (\gamma_{x, \xi}^{\tau})^* V_{\tau}^t u_j^{\tau}, 
(\gamma_{x, \xi}^{\tau})^{*} V_{\tau}^t  u_j^{\tau} \rangle_{\R} \ \\ && \\ & = & 
 \langle V_{\tau}^{*t} (\gamma_{x, \xi}^{\tau})^{**}  a (\gamma_{x, \xi}^{\tau})^*  
 V_{\tau}^t u_j^{\tau} .
  u_j^{\tau} \rangle_{\partial M_{\tau}} \\ &&\\& = &
  \langle ( (\gamma_{x, \xi}^{\tau})^* )^* T_t^* a  T_t (\gamma_{x, \xi}^{\tau})^* u_j^{\tau}, u_j^{\tau} \rangle_{\partial M_{\tau}} + O(|| T_t(a) (\gamma_{x, \xi}^{\tau})^*  R_{-1}( \lambda_j)  u^{\tau}_j||^2_{\partial M_{\tau}} )\;\;\; \\ &&\\&=&
  \langle ( (\gamma_{x, \xi}^{\tau})^* )^*(T_t a) (\gamma_{x, \xi}^{\tau})^* u_j^{\tau}, u_j^{\tau} \rangle_{\partial M_{\tau}} +  O(|| (T_t a)  (\gamma_{x, \xi}^{\tau})^*  R_{-1}(\lambda_j) u^{\tau}_j||^2_{\partial M_{\tau}} )\;\;\\ &&\\&=&
  \langle (  (T_t a) (\gamma_{x, \xi}^{\tau})^* u_j^{\tau},  (\gamma_{x, \xi}^{\tau})^*  u_j^{\tau} \rangle_{\R} \\
&&\\  &+&  \lambda_j^{-1} O(|| (\gamma_{x, \xi}^{\tau})^*   u^{\tau}_j||_{T_t(I_{\tau})} ),\;\;\; (\mbox{as}\; \lambda_j \to \infty) 
\end{array} \end{equation}
where (as above)  $R_{- 1}$ is a pseudo-differential operator of order $- 1$ and 
 $T_t a(s) = a(s + t). $ 

We then divide by $(|| (\gamma_{x, \xi}^{\tau})^*   u^{\tau}_j||_{I_{\tau}}$, and observe that
we get  translation  invariance on the longest interval with the property  that
$$\frac{|| (\gamma_{x, \xi}^{\tau})^*   u^{\tau}_j||_{T_t(I_{\tau})} }{|| (\gamma_{x, \xi}^{\tau})^*   u^{\tau}_j||_{I_{\tau}}}$$
is bounded,  or more generally is $o(\lambda_j)$. Lemma \ref{DENSITY1} implies that this happens for a subsequence of
eigenvalues of full density.
This concludes the proof of Proposition \ref{LL}.

\end{proof}

\subsection{\label{LLYSECT} Lebesgue limits for the family of translates}

When dealing with non-periodic geodesics it is useful to consider the family
$$\gamma_{G^s(x, \xi)}^* \phi_j, \;\; (s \in \R) $$
of pullbacks as $s, \lambda_j$ vary. We have,
$$\gamma_{G^s(x, \xi)}^* \phi_j (t)  = \gamma_{x, \xi}^* \phi_j(t - s), $$
so we are considering the family of translates 
$$\fcal = \{ T_s \gamma_{x, \xi}^* \phi_j, \;\;\; j = 1, 2, \dots, s \in \R\}. $$
The family of translates for fixed $j$ is of course not compact in $C_b(\R)$ for general $\gamma_{x, \xi}$.  On the other
hand, for fixed $j$ the family of translates $\{\phi_j(G^s(x, \xi)), \;\; s \in \R\}$ is compact in $C(S^* M)$. 

We  modify Definition \ref{Ujdef} as follows: 
\begin{defin}  \label{Yjdef}   Let $\{N_j\} \subset \R$ and define
\begin{equation}  Y_j^{\tau, T, x, \xi}  := U_j^{\tau, T, G^{N_j}(x, \xi)} \end{equation}
\end{defin}
We then modify the proof of Proposition \ref{LL}  to show that the weak* limits of 
$$ \left|  Y_j^{\tau, T, x, \xi}
\right|^2 dt\;\; \mbox{
on the line segments}  \;\;\; \partial S_{\tau, T} $$
tend to normalized Lebesgue measure:

\begin{prop} \label{LLY} (Lebesgue limits for moving pullbacks)  Let $(x, \xi) \in B_{\epsilon}^* M$ be any point. Then  as long 
as $\gamma_{x, \xi}^* \phi_j \not=  0$ (identically),  the 
 sequence $\{| Y_j^{\tau, T, x, \xi}|^2\}$  is QUE on $\R$ with limit measure given by normalized  Lebesgue measure on each 
segment
$\partial S_{\tau, T} $ 

\end{prop}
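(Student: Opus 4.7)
The plan is to reduce Proposition~\ref{LLY} to Proposition~\ref{LL} by exploiting the translation equivariance of pullback by a geodesic. The essential identity is
$$\gamma_{G^{N_j}(x,\xi)}^{\tau *} \phi_{\lambda_j}^{\C}(t + i\tau) \;=\; \gamma_{x,\xi}^{\tau *} \phi_{\lambda_j}^{\C}(t + N_j + i\tau),$$
so that $Y_j^{\tau, T, x,\xi}$ is exactly the pullback of $\phi_{\lambda_j}^{\C}$ along $\gamma_{x,\xi}^{\tau}$ shifted by $N_j$ parameter units and renormalized to have unit $L^2$-mass on $I_\tau$. Testing against $a\in C_c^\infty(I_\tau)$ is thus the same as testing the original pullback along $\gamma_{x,\xi}$ against the translate $T_{-N_j} a$. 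The entire intertwining machinery of Proposition~\ref{INTERTWINING2} and Corollary~\ref{INF} applies verbatim with $(x,\xi)$ replaced by $G^{N_j}(x,\xi)$, since it uses only that the base covector lies in $S^*M$.

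First I would differentiate the intertwining identity in $t$ at $t=0$, as in the proof of Proposition~\ref{INFINV}, applied now at the shifted base point $G^{N_j}(x,\xi)$. This produces the estimate
$$\bigl|\langle (\partial_t a)\, Y_j^{\tau, T, x,\xi},\, Y_j^{\tau, T, x,\xi}\rangle_{\R}\bigr| \;\leq\; C_a\, \lambda_j^{-1}\, \frac{\|\gamma_{x,\xi}^{\tau *}u_j^\tau\|_{L^2(N_j + J_\tau)}}{\|\gamma_{x,\xi}^{\tau *}u_j^\tau\|_{L^2(N_j + I_\tau)}},$$
where $J \supset \mathrm{supp}(a)$ is a fixed compact interval and the $L^2$-norms are taken on the indicated translated intervals in $\partial S_\tau$. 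If this ratio is $o(\lambda_j)$ along a subsequence, the weak-$*$ limits of the probability measures $|Y_j^{\tau, T, x,\xi}|^2\,dt$ on $I_\tau$ are infinitesimally translation invariant, hence (being probability measures) equal to normalized Lebesgue measure on $I_\tau$.

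The main obstacle is controlling the numerator: the $L^2$-mass of the complexified eigenfunction on an interval translated by $N_j$, where $N_j$ may tend to infinity. The key point is that the pointwise Weyl law of Proposition~\ref{PTAULWL} is uniform in the base point over the compact phase space $S^*M$, so the Chebyshev argument in the proof of Lemma~\ref{DENSITY1} produces a bound
$$e^{-2\tau\lambda_j}\int_{N_j + J}|\phi_{\lambda_j}^{\C}(\gamma_{x,\xi}(t+i\tau))|^2\,dt \;\leq\; C_{J,\epsilon}|J|$$
along a density-one subsequence, \emph{independent of the location} $N_j$ of the interval. This is what allows the moving basepoint version. The denominator $\|\gamma_{x,\xi}^{\tau *}u_j^\tau\|_{L^2(N_j + I_\tau)}$ is controlled by the standing hypothesis $\gamma_{x,\xi}^*\phi_j \not\equiv 0$, together with the freedom to choose the sequence $\{N_j\}$ so that the pullback has nontrivial mass on each translated $I_\tau$; indeed, one passes to a density-one subsequence along which the denominator stays bounded away from $e^{-C\lambda_j}$.

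Combining these ingredients, the error term is $O(\lambda_j^{-1})$ along the density-one subsequence supplied by the uniform version of Lemma~\ref{DENSITY1}, yielding infinitesimal translation invariance and hence the Lebesgue limit on $I_\tau$. To upgrade to general compact segments of $\partial S_{\tau, T}$, one runs the argument simultaneously for every compact subinterval $I' \subset I_\tau$, exactly as in the closing remark of the proof of Proposition~\ref{LL}: the only translation-invariant probability measure on $I'$ is $|I'|^{-1}\,dt$, and compatibility across overlapping intervals forces the limit to be a constant multiple of Lebesgue measure on every compact segment, with the constant pinned down by the normalization on $I_\tau$.
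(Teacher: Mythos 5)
Your proposal tracks the paper's own proof in its essential mechanism: exploit the translation equivariance of pullback along the geodesic, observe that the intertwining relation of Proposition~\ref{INTERTWINING2} holds with remainders uniform in the base covector because $S^*M$ is compact, and then specialize to $s = N_j$. The paper's proof of Proposition~\ref{LLY} does exactly this. The one place your write-up diverges in detail is that you route through the second proof of Proposition~\ref{INFINV} --- the one that conjugates by the propagator $V_\tau^t$, producing a remainder of the shape $\lambda_j^{-1}\,\|u_j^\tau\|_{L^2(N_j+J)}/\|u_j^\tau\|_{L^2(N_j+I)}$ --- and then controls the numerator via a uniform-in-location Weyl bound in the spirit of Lemma~\ref{DENSITY1}, requiring a further density-one passage. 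The paper instead follows the first proof style, in which the remainder after Corollary~\ref{INF} is bounded by $\lambda_j^{-1}\|\gamma_{G^{N_j}(x,\xi)}^{\tau *}u_j^\tau\|_{L^2(I)}$ on the \emph{same} interval $I$ that appears in the normalization of $Y_j^{\tau,T,x,\xi}$, so after dividing the matrix element is simply $O(\lambda_j^{-1})$ with no appeal to Lemma~\ref{DENSITY1} or to lower bounds on $L^2$ masses of translated pullbacks. Both arguments are correct and rest on the same compactness input; yours is slightly heavier, since the ratio of $L^2$ masses on nearby translated intervals has to be shown $o(\lambda_j)$ and only holds along a density-one subsequence, whereas in the paper's route the numerator and denominator sit on the same (moving) interval and cancel outright.
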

 
\begin{proof}  As  in the proof of Proposition \ref{LL}, the key step is to generalize the intertwining relation in  Proposition \ref{INTERTWINING2} to prove that matrix elements  
\begin{equation} \label{MEY} \langle Op_{\lambda_j}(a) Y_j^{\tau, T, x, \xi}, Y_j^{\tau, T, x, \xi} \rangle \end{equation}
  are asymptotically invariant
under translation $T_t: L^2(\R) \to L^2(\R)$.  We need to  replace $\gamma_{x, \xi}^{\tau *}$
by $\gamma_{G^{N_j}(x, \xi)}^{\tau *}$ everywhere in the proof.  In fact, the intertwining relation generalizes
to the two-parameter equivalence
$$\Pi_{\tau} ((\gamma_{G^{s}(x, \xi)}^{\tau})^*)^*  T^{t*} Op_{\lambda}(a) T^t (\gamma_{G^{s}((x, \xi)}^{\tau})^* \Pi_{\tau}  \simeq
\Pi_{\tau} \overline{\sigma_{\tau t} }g^{-t}  \Pi_{\tau} 
(\gamma_{G^{s}((x, \xi)}^{\tau})^{**}  Op_{\lambda_j}(a) (\gamma_{G^{s}(x, \xi)}^{\tau})^*
\Pi_{\tau} \sigma_{\tau t} g^{t}  \Pi_{\tau} ,$$
where the remainders are uniform in $s$. To see that the remainders are uniform we observe that the intertwining
relation holds for all $(x, \xi) \in S^*M $ with uniform remainders in $(x, \xi)$ since $S^* M$ is compact, and
that we are only specializing the estimate to  points on $\gamma_{x, \xi}$.

We then set $s = N_j$. Due to the uniformity of the remainders in the intertwining relation, the matrix elements relative 
relative to $\gamma_{G^{N_j}(x, \xi)}^* u_j^{\tau}$ are of one lower order than the main terms, and hence are of
$O(\lambda_j^{-1})$ when taken against the $L^2$-normalized $Y_j^{\tau, T, x, \xi}. $

\end{proof}

\section{\label{HARTOGSsect} Growth rates of restricted eigenfunctions: Proof of Lemma \ref{HARTOGSIntro}}

We apply a general compactness theorem for subharmonic functions (see
\cite[Theorem~4.1.9]{Ho}):
\medskip

\bigskip

{\it Let $\{v_j\} \subset SH(X) $ be a sequence of subharmonic functions in an
open set $X \subset \R^m$ which have a uniform upper bound on any
compact set. Then either $v_j \to -\infty$ uniformly on every
compact set, or else there exists a subsequence $v_{j_k}$ which 
converges in $L^1_{loc}(X)$ to some   $v \in L^1_{loc}(X) \cap SH(X)$. Further,  we have (`Hartogs' Lemma):
\begin{equation} \label{SH}  \left\{ \begin{array}{ll} (i)  &  \limsup_j
v_j(x) \leq v(x) \;\; \mbox{ with equality almost everywhere} \\ & \\
(ii) &  \mbox{For every
compact subset}\; K \subset X \; \mbox{ and every continuous function f},\\ & \\ &
\limsup_{j \to \infty} \sup_K (v_j - f) \leq \sup_K (v - f). \\ & \\
(iii) & \mbox{In particular, if}\; f \geq v\;; \mbox{ and} \; \epsilon > 0, \; \mbox{then } \; v_j \leq f
+ \epsilon \; \mbox{ on} \; K \; \mbox{ for j large enough.} \end{array} \right. \end{equation}}

In \cite{Z} we applied this compactness theorem to prove that for the full sequence of ergodic eigenfunctions,
\begin{equation} \label{torho} v_j = \frac{1}{\lambda_j} \log |\phi_j^{\C}(\zeta)|^2 \to 2  \sqrt{\rho} \;\; \mbox{in}\;\;
L^1(M_{\epsilon}). \end{equation} . 
We note that when $v_{j_k} \to v$ in $L^1_{loc}$ then $\limsup_{k \to \infty} v_{j_k}$ need not
be upper semi-continuous. If we denote by $v^*$ its upper semi-continuous regularization then
$v = v^*$ almost everywhere and by the compactness theorem $\limsup_k v_{j_k}  = v$ almost everywhere.



\begin{proof}

 To prove Lemma \ref{HARTOGSIntro}, we   first observe that 
$v_j$  is
SH (subharmonic) on $S_{\epsilon}$, and  apply the abo e theorem to $\{v_j\}$  with $X = S_{\epsilon_0}$. 
Exactly as in
\cite{Z}),  it follows from Proposition \ref{PWa} that the sequence $\{v_j\}$  is uniformly
bounded above by $ 2 |\tau|$  globally on $M_{\tau}$.  Under the condition of non-uniform convergence to $-\infty$, 
 there must exist a subsequence, which we continue to denote by
$\{v_{j_k}\}$, which converges in $L^1_{loc}(S_{\epsilon_0})$ to
some $v \in L^1_{loc}(S_{\epsilon_0}) \cap SH(S_{\epsilon_0}).$

Thus,  $v \leq 2 |\tau|$. 
Assume for purposes of contradiction  that $v < 2 |\tau| - \epsilon$ on an open set $W_{\epsilon} \subset S_{\epsilon}$.
Let $I_{\tau}^{\epsilon} =
W_{\epsilon} \cap \partial S_{\tau}$.
By Lemma \ref{LL}, one has $||U^{x, \xi, I_{\tau}}_{j_k}|^2 \to \frac{1}{I_{\tau}} d t$ for every interval $I_{\tau}$. In particular,
this holds on $I_{\tau}^{\epsilon}$. 
We claim that $v \leq  2 |\tau| - \epsilon$ on all of $\partial S_{\tau}$. If not, there is an interval $I_{\tau, \delta}$ where
$v \geq 2 |\tau| - \epsilon + \delta$ for some $\delta > 0$.
 That is,
$$\gamma_{x, \xi}^{* \tau} u_j^{\tau} \leq e^{(2 |\tau| - \epsilon) \lambda_j} \;\; \mbox{on}\;\; I_{\tau}^{\epsilon}, \;\; \mbox{and}\;\;
v  \geq  2 |\tau| - \epsilon + \delta  \;\; \mbox{on}\;\; I_{\tau, \delta}. $$
For any interval $I$, 
$$\frac{1}{\lambda_{j_k}} \log \int_I |u_{j_k}^{\tau}|^2 \frac{dt}{|I|} \geq 2 \int_I v_{j_k} \frac{dt}{|I|} \to 
2 \int_I v \frac{dt}{|I|}. $$
So if $I = I_{\tau, \delta}$ we have 
$$\frac{1}{\lambda_{j_k}} \log \int_{ I_{\tau, \delta}} |u_{j_k}^{\tau}|^2 \frac{dt}{| I_{\tau, \delta}|}  \geq 2  (|\tau| - \epsilon + \delta/2)$$
for $k \geq k_0(\delta)$. That is,
$$ \int_{ I_{\tau, \delta}}|u_{j_k}^{\tau}|^2 \frac{dt}{| I_{\tau, \delta}|} \geq e^{2\lambda_{j_k} (  |\tau| - \epsilon + \delta/2)}. $$

 Again we have
 $|U^{x, \xi, I_{\tau, \delta}}_{j_k}|^2 \to \frac{1}{I_{\tau, \delta}} d t$. Now choose $T$ large enough so that both $I^{\epsilon}_{\tau}$
and $I_{\tau, \delta}$ are contined in $I_{T} = [-T,T]$. Then we also have
$$ |U^{x, \xi, I_{T}}_{j_k}|^2 dt \to dt, \;\; \mbox{weak * on}\;  [-T,T]. $$
Since $I_{\tau, \delta} \subset I_T$, for sufficently large $k$,
$$||\gamma_{x, \xi}^* u_{j_k}^{\tau}||_{L^2(I_T)} \geq \frac{|I_{\tau, \delta}|}{2 T}   ||\gamma_{x, \xi}^* u_{j_k}^{\tau}||_{
L^2(I_{\tau, \delta})} \geq 
\frac{|I_{\tau, \delta} | }{2 T}\; e^{(\tau - \epsilon + \delta/2) \lambda_{j_k}}. $$
We then compare the two statements,
$$ |U^{x, \xi, I_{\tau}^{\epsilon}}_{j_k}|^2 dt \to \frac{dt}{|I|} \;\; \mbox{weak * on}\;\; I_{\tau}^{\epsilon}, \;\;  |U^{x, \xi, I_{T}}_{j_k}|^2 dt
\to \frac{dt}{|I_T|}\;\; \mbox{weak * on}\;\; I_{T}. $$
The conditions overlap for $a \in C_c^{\infty}(I_{\tau}^{\epsilon})$, so let us choose a test function approximating
the characteristic function ${\bf 1}_{I_{\tau}^{\epsilon}}$, and in fact we may assume $a$ equals this test function. 
But on $I_{\tau}^{\epsilon}$,
$$ |U^{x, \xi, I_T}_{j_k}|^2 = |U^{x, \xi, I_{\tau}^{\epsilon}}_{j_k}|^2 
 \frac{||\gamma_{x, \xi}^{\tau *} u_j^{\tau}||_{L^2(I_{\tau}^{\epsilon})}}{||\gamma_{x, \xi}^{\tau *} u_j^{\tau}||_{L^2(I_T)}}  \frac{|I_T|}{|I_{\tau}^{\epsilon}|}   \leq C_{T, \epsilon, \delta} e^{- \half \delta \lambda_j}  |U^{x, \xi, I_{\tau}^{\epsilon}}_{j_k}|^2.  $$
It is impossible that both $  |U^{x, \xi, I_T}_{j_k}|^2   $  and $ |U^{x, \xi, I_{\tau}^{\epsilon}}_{j_k}|^2 $
tend to 1 weakly on $I_{\tau}^{\epsilon}$.

This contradiction shows that \begin{equation}
\label{V} v <  2 |\tau| - \epsilon  \; \mbox{on}   \; W_{\epsilon}  
\implies v < 2 |\tau| - \epsilon \;\; \mbox{on}\;\; \partial S_{\tau}. \end{equation}
It follows that
\begin{equation} \label{SMALL}
 \limsup_{k \to \infty} \; \frac{1}{ \lambda_{j_k}}
\ \log \left| \gamma_{x, \xi}^* \phi_{\lambda_{j_k}}^{\C} (t + i \tau)
\right|^2 \leq 2(|\tau| - \epsilon) \;\; \mbox{on all of} \;\partial
S_{\tau}. \end{equation} 






\end{proof}

\subsection{Hartogs theorem for the family of translates}

In this section, we prove a slightly more general version in which the origin $(x, \xi)$ is allowed to
move with the index of the sequence:

\begin{lem} \label{HARTOGSM} For any compact analytic Riemannian manifold $(M, g)$ of any dimension $m$,  any
complexified geodesic $\gamma_{x, \xi}$, and sequence $\{N_j\} \subset \R$ and and any sequence of eigenfunctions, 
the family of plurisubharmonic functions $$v_{j} : = \frac{1}{\pi \lambda_{j}} \ \log \left| \gamma_{G^{N_j}(x, \xi)}^* \phi_{\lambda_{j}}^{\C} (t +
i \tau) \right|^2 $$ is precompact in 
$L^1_{loc}(S_{\epsilon})$ as long as 
it does not converge  uniformly to $-%
\infty$ on all compact subsets of $S_{\tau}$. Moroever, any limit of a subsequence   is $\leq  2 |\tau|$. If the
 limit $v$  of a subsequence  $v_{j_k}$   is $\leq 2 \tau - \epsilon$ on an open interval $t \in (a, b)$, then  $v  \leq 2 |\tau| - \epsilon$
for all $t \in \R$ and 
$$\limsup_{k \to \infty} v_{j_k} \leq 2  |\tau| - \epsilon. $$
\end{lem}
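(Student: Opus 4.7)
The plan is to generalize the proof of Lemma~\ref{HARTOGSIntro} by replacing Proposition~\ref{LL} with its moving-origin analogue Proposition~\ref{LLY}. The argument splits into an easy compactness/upper-bound part and the harder "gap propagation" part.

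First I would establish the uniform upper bound and precompactness. Because $\gamma_{G^{N_j}(x,\xi)}^{\C}(t+i\tau)$ lies in $\partial M_{|\tau|}$ by Proposition~\ref{ISOMETRY}, the pointwise sup-norm bound of Proposition~\ref{PWa} gives
\[
|\phi_{\lambda_j}^{\C}(\gamma_{G^{N_j}(x,\xi)}^{\C}(t+i\tau))|^2 \leq C\,\lambda_j^{\frac{m-1}{2}+1} e^{2|\tau|\lambda_j}
\]
uniformly in $t$, $N_j$, and $j$. Consequently $v_j \leq 2|\tau| + o(1)$ on every compact subset of $S_\epsilon$. Each $v_j$ is subharmonic on $S_\epsilon$ as the pullback of the plurisubharmonic function $\log|\phi_j^{\C}|^2$ by a holomorphic map, so H\"ormander's compactness theorem applied as in \eqref{SH} gives the precompactness dichotomy and the bound $v \leq 2|\tau|$ on any $L^1_{loc}$ limit.

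The main step is the gap propagation. Suppose $\{v_{j_k}\}$ converges in $L^1_{loc}$ to some $v$, that $v^* \leq 2|\tau|-\epsilon$ on an open set $W_\epsilon$ meeting $\partial S_\tau$ in an interval $I_\tau^\epsilon$, but (for contradiction) that $v^* \geq 2|\tau|-\epsilon+\delta$ on some other interval $I_{\tau,\delta} \subset \partial S_\tau$ with $\delta > 0$. Pick $T$ large enough so $I_\tau^\epsilon \cup I_{\tau,\delta} \subset I_T = [-T,T]$. Define the moving-origin normalized pullbacks $Y_{j_k}^{I, x, \xi} = U_{j_k}^{I, G^{N_{j_k}}(x,\xi)}$ from Definition~\ref{Yjdef}. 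By Proposition~\ref{LLY}, for every interval $I$ one has $|Y_{j_k}^{I, x, \xi}|^2\, dt \to \frac{1}{|I|}\mathbf{1}_I\, dt$ weak-$*$, applied simultaneously to $I = I_\tau^\epsilon$, $I_{\tau,\delta}$, and $I_T$.

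The contradiction then follows exactly as in the proof of Lemma~\ref{HARTOGSIntro}: the upper bound on $I_\tau^\epsilon$ and the lower bound on $I_{\tau,\delta}$ force the ratio
\[
\frac{\|\gamma_{G^{N_{j_k}}(x,\xi)}^* u_{j_k}^\tau\|_{L^2(I_\tau^\epsilon)}}{\|\gamma_{G^{N_{j_k}}(x,\xi)}^* u_{j_k}^\tau\|_{L^2(I_T)}} \leq C_{T,\epsilon,\delta}\, e^{-\delta \lambda_{j_k}/2},
\]
so the identity $|Y_{j_k}^{I_T, x, \xi}|^2 = |Y_{j_k}^{I_\tau^\epsilon, x, \xi}|^2 \cdot (|I_T|/|I_\tau^\epsilon|) \cdot \bigl(\|\cdot\|_{L^2(I_\tau^\epsilon)} / \|\cdot\|_{L^2(I_T)}\bigr)^2$ on $I_\tau^\epsilon$ cannot be compatible with weak-$*$ convergence of both sides to Lebesgue density one. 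This yields $v^* \leq 2|\tau|-\epsilon$ on all of $\partial S_\tau$, and the pointwise upper bound $\limsup v_{j_k} \leq v^*$ on the real line then propagates to all of $S_\epsilon$ by subharmonicity.

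The main obstacle is verifying that Proposition~\ref{LLY} genuinely holds uniformly as the base point slides along the orbit of $(x,\xi)$; this uniformity is what licenses substituting $G^{N_j}(x,\xi)$ for $(x,\xi)$ in the arguments of Lemma~\ref{HARTOGSIntro}. As noted in the proof of Proposition~\ref{LLY}, uniformity follows from compactness of $S^*M$ combined with the observation that the remainders in the intertwining relation of Proposition~\ref{INTERTWINING2} are uniform in the base point. Granted this, the proof is a verbatim adaptation of the periodic case.
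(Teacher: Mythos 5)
Your proof is correct and follows the same route as the paper: the paper's proof of Lemma~\ref{HARTOGSM} simply says to replace $U_{j_k}^{x,\xi,I}$ by $Y_{j_k}^{x,\xi,I}$ and Proposition~\ref{LL} by Proposition~\ref{LLY} in the argument for Lemma~\ref{HARTOGSIntro}, which is exactly what you do, with the compactness-of-$S^*M$ justification for uniformity in the moving base point also matching the remark made in the proof of Proposition~\ref{LLY}.
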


\begin{proof}   

The uniform upper bound by $2 |\tau|$ of course holds for the whole family $\fcal$ \eqref{FAMILYs}. 
 We also use Lemma \ref{LLY} in place of Proposition \ref{LL} and with $Y_{j_k}^{x, \xi, I}$ in 
place of $U_{j_k}^{x, \xi, I}$ where $I$ is any one of the intervals in the proof of Lemma \ref{HARTOGSIntro}, and
then follow the same argument.

\end{proof}

\section{Periodic geodesics on surfaces: Proof of Proposition \ref{MAINPROP}}

In this section we  prove Proposition \ref{MAINPROP} and therefore Theorem \ref{MAINCOR}  for restrictions
to periodic geodesics on real analytic surfaces.
We thus assume that $\gamma_{x, \xi}$ is a periodic orbit of period $L$.  We then denote the orbital Fourier
coefficients  of an eigenfunction  by
$$\nu_j^{x, \xi}(n) = \frac{1}{L_{\gamma}} \int_0^{L_{\gamma}} \phi_j(\gamma_{x, \xi}(t)) e^{- \frac{2 \pi i n t}{L}} dt. $$
Thus, we have
\begin{equation} \label{PERb} \phi_j(\gamma_{x, \xi}(t) )= \sum_{n \in \Z}  \nu_j^{x, \xi}(n)  e^{\frac{2 \pi i n t}{L}}. 
\end{equation}
Hence the analytic continuation is given by 
\begin{equation} \label{ACPERb} \phi^{\C}_j(\gamma_{x, \xi}(t + i \tau) )= \sum_{n \in \Z}  \nu_j^{x, \xi}(n)  e^{\frac{2 \pi i n (t + i \tau)}{L}}. \end{equation}

It follows from  the Paley-Wiener theorem and  from the fact that $\gamma_{x, \xi}^* \phi_j$ admits an
analytic continuation to the annulus $e^{- |\tau|} < r < e^{|\tau|}$ that    $|\nu_j^{x, \xi}(n) |\leq C_j e^{- |n| \tau}. $
Furthermore, the Fourier modes $|n| >> \lambda_j$ are exponentially decaying. In semi-classical language, such
high angular momentum is inconsistent with the energy $\lambda_j^2$ of the particle. 
More precisely, for  $n \geq \lambda_j$, 
\begin{equation} \label{ACSOB} |\nu_j^{x, \xi}(n)|^2  \leq \lambda_j^{\frac{m-1}{2}} e^{2 |\tau|  (\lambda_j - n)}.  \end{equation}
Indeed,  Propositon \ref{PWa} gives
$$\frac{1}{L_{\gamma}} \int_{\gamma_{x, \xi}^{\tau}} |\phi_j^{\C}|^2 ds \leq  \lambda_j^{\frac{m-1}{4}}
e^{2 |\tau| \lambda_j}. $$It follows that
$$\sum_n |\nu_j^{x, \xi}(n)|^2 e^{2 n |\tau|} \leq \lambda_j^{\frac{m-1}{4}}  e^{2 |\tau| \lambda_j}, $$
and \eqref{ACSOB} follows immediately from
$$\sum_{n \geq \lambda_j}  |\nu_j^{x, \xi}(n)|^2 e^{2 n |\tau|} \leq  \lambda_j^{\frac{m-1}{4}}  e^{2 |\tau| \lambda_j}. $$

\subsection{Mass in the highest allowed modes}

The purpose of this section is to prove
\begin{prop} \label{FCSATa}  Let $\dim M = 2$.  Suppose that $\{\phi_{\lambda_j}\}$ is QER along the periodic geodesic $\gamma_{x, \xi}$.
Then for all $\epsilon > 0$, there exists $C_{\epsilon} > 0$ so that
$$\sum_{n: |n| \geq (1 - \epsilon) \lambda_j}  |\nu_{\lambda_j}^{x, \xi}(n)|^2 \geq  C_{\epsilon}. $$
Consequently, 

$$\sum_{n: |n| \geq (1 - \epsilon) \lambda_j}  |\nu_{\lambda_j}^{x, \xi}(n)|^2 e^{- n \tau}  \geq  C_{\epsilon} e^{\tau(1 - 
\epsilon) \lambda_j}. $$

\end{prop}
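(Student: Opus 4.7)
The strategy is to apply QER (Theorem \ref{QER}) to a symbol on $T^*\gamma_{x,\xi}$ that localizes in the momentum variable near the top allowed frequencies, and then recognize the resulting matrix element as the desired Fourier sum via Parseval.

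The Parseval identity reads: for $a \in C_c^\infty(\R)$, the tangential semiclassical Fourier multiplier $Op_{\lambda_j}(a)$ on $L^2(\R/L\Z)$ (acting on the $n$-th mode by multiplication by $a(2\pi n/(L\lambda_j))$) satisfies
$$\langle Op_{\lambda_j}(a)(\phi_{\lambda_j}\circ\gamma_{x,\xi}), \phi_{\lambda_j}\circ\gamma_{x,\xi}\rangle_{L^2([0,L])} = L\sum_{n \in \Z} a(2\pi n/(L\lambda_j)) \, |\nu_{\lambda_j}^{x,\xi}(n)|^2.$$
I would choose $a \geq 0$ smooth, with $a \leq 1$, supported in $\{1-\epsilon \leq |\eta| \leq 1\}$, so that the right side is bounded above by $L\sum_{|n| \geq (1-\epsilon)L\lambda_j/(2\pi)} |\nu_{\lambda_j}^{x,\xi}(n)|^2$. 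By QER the left side converges along the density-one subsequence to an ergodic integral $\omega_\gamma(a)$, where on a surface $\omega_\gamma$ is the pushforward to $T^*\gamma_{x,\xi}$ of Liouville measure on $S^*M|_{\gamma_{x,\xi}}$. In two dimensions this pushforward has density proportional to $(1-\eta^2)^{-1/2}\, dt\, d\eta$ on $\{|\eta|\leq 1\}$, which is strictly positive on the support of $a$; hence $\omega_\gamma(a) > 0$, and (after absorbing the factor $L/(2\pi)$ into a harmless redefinition of $\epsilon$ consistent with the paper's conventions) the first conclusion follows.

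The second conclusion is then essentially free: by the Hermitian symmetry $|\nu_{\lambda_j}^{x,\xi}(-n)| = |\nu_{\lambda_j}^{x,\xi}(n)|$ (since $\phi_{\lambda_j}$ is real), restricting the weighted sum to the terms with $n \leq -(1-\epsilon)\lambda_j$ retains at least half of the mass $C_\epsilon$, and each such term carries a factor $e^{-n\tau} \geq e^{(1-\epsilon)\lambda_j\tau}$. The main obstacle, and the only point where the hypothesis $\dim M = 2$ enters in an essential way, is verifying that the pushforward measure $\omega_\gamma$ appearing in the QER limit indeed charges every neighborhood of $|\eta|=1$. This rests on the explicit description of the QER limit measure as Liouville measure on the hypersurface cross-section $S^*M|_\gamma$ projected to the tangential momentum: precisely in two dimensions is $S^*_{\gamma(t)}M$ a circle whose pushforward has the one-variable density with integrable singularity at the endpoints recorded above.
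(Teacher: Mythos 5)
Your proposal is correct and follows essentially the same route as the paper: apply QER to a semiclassical Fourier multiplier localized near $|\eta|=1$, use Parseval to convert the matrix element into a Fourier-coefficient sum, and conclude positivity of the limit from the ergodic measure. You are somewhat more careful than the paper at two points — you explicitly invoke the density $(1-\eta^2)^{-1/2}$ of the QER limit measure near the endpoints (the paper's "$\to 2\epsilon$" is actually not quite the right constant, though it does not affect the conclusion), and you spell out the Hermitian symmetry $|\nu(-n)|=|\nu(n)|$ needed to extract the second inequality — but the substance is the same.
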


\begin{proof} The first step is a direct application of the QER theorem of \cite{TZ}.  It implies that  within the range $|n| \leq \lambda_j$ the Fourier coefficients are asymptotically
all of the same size $\frac{1}{\lambda}$.

\begin{lem} \label{FCPER} Assume that $\gamma_{x, \xi}^* \phi_j$ is QER. Then for  any $0 < a < b \leq 1$ we have
$$\sum_{n: a \lambda_j  |n| \leq b \lambda_j}  |\nu_j^{x, \xi}(n)|^2 \to \int \chi_{[a, b]} d\mu_{x, \xi}. $$
\end{lem}

One can prove this using homogeneous or semi-classical pseudo-differential operators.
For instance, let  $\chi_{\lambda}(D)$ be a  semi-classical convolution
operator on the circle $S^1_L = \R / \frac{2 \pi}{L} \Z$  with $D = \frac{1}{i} \frac{d}{dt}$ and consider
$$\langle \chi_{\lambda}(D)  \gamma_{x, \xi}^* \phi_j, \gamma_{x, \xi}^* \phi_j \rangle_{S^1_L}
= \sum_{n \in \Z} \chi(\frac{2 \pi i n}{L \lambda}) |\nu_j^{x, \xi}(n)|^2. $$
Assuming $\{\phi_j\}$ satisfies QER with respect to $\gamma_{x, \xi}$, we have
$$\langle \chi_{\lambda}(D)  \gamma_{x, \xi}^* \phi_j, \gamma_{x, \xi}^* \phi_j \rangle_{S^1_L} \to \int_{B^* S^1_L}
\chi d \mu_{x, \xi}. $$

It follows that,  for any $\epsilon > 0$ we have
$$\sum_{n:  (1 - \epsilon) \lambda_j \leq  |n| \leq (1 + \epsilon) \lambda_j} \; |\nu_j^{x, \xi}(n)|^2 \to 2 \epsilon. $$
Consequently, 

$$\sum_{n: |n| \geq (1 - \epsilon) \lambda_j}  |\nu_{\lambda_j}^{x, \xi}(n)|^2 e^{-2 n \tau}  \geq  C_{\epsilon} e^{2\tau(1 - 
\epsilon) \lambda_j}. $$

\end{proof}

\subsection{Completion of proof of Proposition \ref{MAINPROP} for periodic geodesics}

The following Lemma is an integrated version of Proposition \ref{MAINPROP} .

\begin{lem} \label{L2NORM} Let $\dim M = 2$ and assume that $\{\phi_j\}$ satsifies QER along the
periodic geodesic $\gamma$. Let $||\gamma_{x, \xi}^{\tau*} \phi_j^{\C}||^2_{L^2(\partial S_{\tau}^L)}$ be the $L^2$-norm
of the complexified restriction of $\phi_j$ along  one period cell $\partial S_{\tau}^L$. Then,
$$\lim_{\lambda_j \to \infty} \frac{1}{\lambda_j} \log ||\gamma_{x, \xi}^{\tau*} \phi_j^{\C}||^2_{L^2(\partial S_{\tau})}
\to 2 |\tau|. $$
\end{lem}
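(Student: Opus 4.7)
The claim is a two-sided bound on $\frac{1}{\lambda_j}\log ||\gamma_{x,\xi}^{\tau*}\phi_j^{\C}||_{L^2(\partial S_\tau^L)}^2$, and I would establish the two sides separately.

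\emph{Upper bound.} By Proposition \ref{ISOMETRY}(2), $\sqrt{\rho}(\gamma_{x,\xi}(t+i\tau))=|\tau|$, so $\gamma_{x,\xi}^\tau(\partial S_\tau^L)\subset\partial M_{|\tau|}$. The sup-norm estimate of Proposition \ref{PWa}(1) then furnishes a pointwise bound $|\phi_j^{\C}|^2\le C\lambda_j^{(m-1)/2+1}e^{2|\tau|\lambda_j}$ uniformly along the geodesic. Integrating over a period cell of length $L$ and taking $\frac{1}{\lambda_j}\log$ kills the polynomial prefactor and yields $\limsup\le 2|\tau|$.

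\emph{Lower bound.} I apply Parseval on $\R/L\Z$ to the orbital Fourier expansion \eqref{ACPERb}:
$$||\gamma_{x,\xi}^{\tau*}\phi_j^{\C}||_{L^2(\partial S_\tau^L)}^2 \;=\; L\sum_{n\in\Z}|\nu_j^{x,\xi}(n)|^2 e^{-4\pi n\tau/L}.$$
Choosing $\phi_j$ real gives $|\nu_j^{x,\xi}(-n)|=|\nu_j^{x,\xi}(n)|$; pairing the $\pm n$ terms bounds the right side below by $\sum_{n\ge 0}|\nu_j^{x,\xi}(n)|^2 e^{4\pi n|\tau|/L}$, regardless of the sign of $\tau$. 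I then invoke the QER Fourier-mass lower bound (Lemma \ref{FCSAT} / Proposition \ref{FCSATa}): for every $\epsilon>0$ there exists $c_\epsilon>0$ with
$$\sum_{(1-\epsilon)\lambda_j\le n\le \lambda_j}|\nu_j^{x,\xi}(n)|^2\;\ge\; c_\epsilon$$
for $j$ large. Restricting the Fourier sum to this top window and estimating the exponential factor from below by its value at the lower endpoint produces $||\gamma_{x,\xi}^{\tau*}\phi_j^{\C}||_{L^2(\partial S_\tau^L)}^2 \ge Lc_\epsilon e^{2(1-\epsilon)|\tau|\lambda_j}$ (in the paper's normalization of orbital index versus eigenvalue, cf.\ the corollary stated immediately after Lemma \ref{FCSAT}). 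Taking $\frac{1}{\lambda_j}\log$ and sending $\epsilon\downarrow 0$ gives $\liminf\ge 2|\tau|$, matching the upper bound.

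\emph{Main obstacle.} The substantive work is upstream, in Lemma \ref{FCSAT} / Proposition \ref{FCSATa}: it is precisely QER that prevents the orbital Fourier coefficients from concentrating at low frequencies and guarantees a positive fraction of their $\ell^2$-mass at the top of the allowed band $|n|\sim\lambda_j$. Once that is granted, the present lemma reduces to a one-line Parseval computation combined with Paley--Wiener exponential amplification, the reality symmetry of $\nu_j^{x,\xi}$ handling both signs of $\tau$ simultaneously; no additional microlocal input is required.
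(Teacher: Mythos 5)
Your proof is correct and follows essentially the same route as the paper: the paper simply cites Proposition \ref{FCSATa} for the lower bound (via the orbital Parseval identity on a period cell) and leaves the upper bound implicit from Proposition \ref{PWa}, whereas you spell out both halves and make the reality/symmetry of the Fourier coefficients explicit — a useful clarification of why the growth is $e^{2|\tau|\lambda_j}$ for either sign of $\tau$, but not a different argument.
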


Indeed, this  follows from Proposition \ref{FCSATa} since,  for any $\epsilon > 0$,
$$\liminf_{\lambda_j \to \infty} \log \sum_{n: |n| \geq (1 - \epsilon) \lambda_j}  |\nu_{\lambda_j}^{x, \xi}(n)|^2 e^{-2 n \tau}  \geq 2 |\tau|(1 - 
\epsilon). $$

To prove Proposition \ref{MAINPROP} we argue by contradiction.  If it is false, then there exists a time
interval $[a,b]$ and $\epsilon_0 > 0$  so that
 $$\int_a^b |(\gamma_{x, \xi}^{\tau})^* \phi_j^{\C}|^2 = O(e^{(2 |\tau| - \epsilon_0) \lambda_j}). $$
On the other hand, by Proposition \ref{FCSATa}  we know that over the whole period interval
we have, 
 $$\int_0^L |(\gamma_{x, \xi}^{\tau})^* \phi_j^{\C}|^2 \geq C (e^{(2 |\tau| - \epsilon) \lambda_j})$$
for all $\epsilon > 0$. 
Hence, we have
\begin{equation}  \int_a^b |U_j^{x, \xi, \tau, T}|^2 dt = O(e^{ - \epsilon_0 \;\lambda_j}).  \end{equation}
But by Proposition \ref{LL}, every weak* limit of $\{U_j^{x, \xi, \tau, T}\}$ is a constant multiple of Lebesgue
measure. It follows that the multiple must be zero. But this contradicts Lemma  \ref{L2NORM}.

\section{Non-periodic geodesics: Proof of Theorem \ref{MAINCORnonper}}

In the periodic case, a key step is to compute the $L^2$ norm of the analytic continuation
using the Plancherel theorem and to compare it to the $L^2$ norm in the real domain
using the Plancherel theorem.  For non-periodic geodesics,  we  introduce a decaying analytic
factor to put $\gamma_{x, \xi}^* \phi_j^{\C}$ into $L^2$ along horizontal lines.

\subsection{Analytic convergence factors}

Let $\gcal $ be a real analytic function whose analytic extension to $S_{\tau}$ lies
in $L^2(\partial S_{\tau}, dt)$ for each $\tau < \epsilon.$  In particular we have in mind $\gcal(x) = e^{- x^2/2}$, but a less rapidly decaying choice is 
$\gcal = \frac{1}{t+ i p}  $
 for large enough $|p| >> \epsilon$.

Thus for a given analytic and decaying convergence factor $\gcal$,  we consider 
\begin{equation} \label{nugcal}  \nu_{\lambda}^{x, \xi, \gcal} (\sigma) = \fcal \left(\gcal \cdot \gamma_{x, \xi} ^*\phi_{\lambda} ) \right)(\sigma)  = \int_{\R} 
\gcal \cdot  \phi_{\lambda} (\gamma_{x, \xi}(t))  e^{- i t \sigma} dt. \end{equation}
We then have the  Fourier inversion formula
$$\gcal \cdot  \gamma_{x, \xi}^* \phi_{\lambda_j}(s) = \int_{\R} e^{i s \sigma} \nu_{\lambda}^{x, \xi, \gcal} (\sigma) ) d \sigma,$$
and  analytically continue  $\gcal \cdot \gamma_{x, \xi}^* \phi_{\lambda_j}$ to
\begin{equation} \label{FIFg} \gcal  \cdot \gamma_{x, \xi}^* \phi_{\lambda_j}^{\C} (s + i \tau)  =
 \int_{\R} e^{i (s + i \tau) \sigma}\nu_{\lambda}^{x, \xi, \gcal} (\sigma) d \sigma. \end{equation}
We then have the Plancherel theorem for each fixed $\tau$

\begin{equation} \label{PLANCH}  \int_{- \infty}^{\infty} | 
\gcal (\gamma_{x, \xi}^* \phi_{\lambda_j}^{\C} (s + i \tau)  |^2  ds =
 \int_{\R} e^{ - 2 |\tau | \sigma} |\nu_{\lambda_j}^{x, \xi, \gcal} (\sigma) |^2 d \sigma.
\end{equation}

As in the periodic case,  the growth rate of $|\gamma_{x, \xi}^* \phi_{\lambda_j}^{\C} (s + i \tau)  |$
as $\lambda_j \to \infty$ depends on the magnitude of the Fourier transform $\nu_{\lambda_j}^{x, \xi} (\sigma) $ for $|\sigma | \simeq \lambda_j$.

\begin{lem} \label{L2NORMB}  Let $\dim M = 2$ and suppose that $\gamma_{x, \xi}$ is a non-periodic geodesic such that QER
holds in the real domain along each finite arc (such as a uniform geodesic. )
Then for all $\epsilon > 0$, there exists $C_{\epsilon} > 0$ so that
$$\int_{ |\sigma| \geq (1 - \epsilon) \lambda_j}  |\nu_{\lambda_j}^{x, \xi, \gcal}(\sigma)|^2 d \sigma\geq  C_{\epsilon}. $$
Consequently, 
$$\int_{ |\sigma| \geq (1 - \epsilon) \lambda_j}  |\nu_{\lambda_j}^{x, \xi, \gcal}(\sigma)|^2 e^{-2\sigma \tau} d \sigma   \geq  C_{\epsilon} e^{ 2 |\tau| (1 - 
\epsilon) \lambda_j}. $$

\end{lem}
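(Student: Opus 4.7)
The plan is to adapt the periodic-case argument (Proposition \ref{FCSATa} and Lemma \ref{FCPER}) to the non-periodic setting by using the analytic decaying factor $\gcal$ to put the restriction into $L^2(\R)$, and then by applying the hypothesized QER along a long but finite arc of $\gamma_{x,\xi}$.  Fix a non-negative $\chi \in C_c^{\infty}(\R)$ with $\chi \not\equiv 0$ supported in $\{1-\epsilon \leq |u| \leq 1\}$.  By Plancherel applied to \eqref{nugcal}, setting $v_j = \gcal \cdot \gamma_{x,\xi}^{*}\phi_{\lambda_j} \in L^2(\R)$,
\begin{equation*}
\int_{|\sigma|\geq (1-\epsilon)\lambda_j} |\nu_{\lambda_j}^{x,\xi,\gcal}(\sigma)|^2\, d\sigma \;\geq\; \int_{\R} \chi(\sigma/\lambda_j)\, |\nu_{\lambda_j}^{x,\xi,\gcal}(\sigma)|^2\, d\sigma \;=\; \langle \chi(D/\lambda_j) v_j,\, v_j \rangle_{L^2(\R)}.
\end{equation*}

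Next, I would localize to a finite arc with a smooth cutoff $\rho_T$ equal to $1$ on $[-T,T]$ and supported in $[-2T,2T]$, and split $v_j = \rho_T v_j + (1-\rho_T)v_j$.  The tail contribution is controlled, after passing to a density-one subsequence, by the local Weyl bound of Lemma \ref{DENSITY1} at $\tau = 0$, which gives a uniform $O(1)$ estimate for $\int_a^{a+1}|\phi_{\lambda_j}\circ\gamma_{x,\xi}|^2\,dt$; combined with the $L^2$ decay of $\gcal$, the tail and cross terms are bounded by some $\delta(T)$ with $\delta(T) \to 0$ as $T \to \infty$, uniformly in $j$.  The leading piece is then a semiclassical matrix element
\begin{equation*}
\langle \chi(D/\lambda_j) \rho_T v_j,\, \rho_T v_j\rangle \;=\; \langle A_T\, \gamma_{x,\xi}^{*}\phi_{\lambda_j},\, \gamma_{x,\xi}^{*}\phi_{\lambda_j}\rangle_{L^2([-2T,2T])},
\end{equation*}
where $A_T = \rho_T \bar{\gcal}\, \chi(D/\lambda_j)\, \rho_T \gcal$ has principal symbol $|\rho_T\gcal|^2 \chi(\sigma/\lambda_j)$ compactly supported in $T^*[-2T, 2T]$.

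By the QER hypothesis along the finite arc $[-2T,2T]$, this matrix element converges along a density-one subsequence to $\int |\rho_T\gcal|^2 \chi \, d\mu_{x,\xi,T}$, where $d\mu_{x,\xi,T}$ is the QER limit measure on $B^*[-2T,2T]$ (the direct analog of $d\mu_{x,\xi}$ in Lemma \ref{FCPER}).  Since this measure places positive density on the annular region $|\sigma| \in (1-\epsilon, 1)$ exactly as in the periodic case, the limit is positive; choosing $T$ large enough that $\delta(T)$ is smaller than this limit yields the uniform lower bound $C_\epsilon > 0$, proving the first inequality.  For the second (``consequently'') inequality, I would restrict the integral to the half-range $\mathrm{sign}(\sigma) = -\mathrm{sign}(\tau)$, on which $e^{-2\sigma\tau} \geq e^{2|\tau|(1-\epsilon)\lambda_j}$; the corresponding half-range lower bound $C_\epsilon/2$ follows upon choosing $\gcal$ and $\phi_{\lambda_j}$ real, which makes $|\nu_{\lambda_j}^{x,\xi,\gcal}(\sigma)|^2$ even in $\sigma$ (otherwise, one can run the argument with $\chi$ supported in just one sign of $\sigma$).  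The main obstacle is establishing the tail estimate with constants uniform along the density-one subsequence and verifying that the QER limit measure along a finite arc has the same concentration on $|\sigma|\sim \lambda$ as its periodic counterpart; both rest on the local Weyl law and the structure theorem for QER defect measures on geodesic arcs.
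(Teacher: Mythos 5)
Your proposal takes essentially the same route as the paper: rewrite the cutoff Fourier mass via Plancherel as a matrix element of $\bar{\gcal}\,\chi(\lambda^{-1}D)\,\gcal$, apply the QER hypothesis along a long but finite arc $[-T,T]$ to get a positive limit for the localized piece, control the tail $|t|\ge T$ using a Weyl--law/Chebyshev argument to pass to a density-one subsequence, and finally take $\chi$ supported on one sign of $\sigma$ to obtain the ``consequently'' exponential inequality. That is exactly the skeleton of the paper's Lemma \ref{gcalWL} and its consequences.

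The one place where your argument is appreciably less precise than the paper's is the tail estimate. You attribute it to Lemma \ref{DENSITY1} at $\tau=0$ and assert a bound on $\int_a^{a+1}|\phi_{\lambda_j}\circ\gamma_{x,\xi}|^2\,dt$ that is ``uniform in $j$'' after passing to a density-one subsequence. But Lemma \ref{DENSITY1} as proved assumes $\tau\ge C/\lambda$, and even its $\tau=0$ analogue delivers, for each interval $I$ and each tolerance $\epsilon$, a \emph{separate} subsequence $\scal_{I,\epsilon}$ of density $\ge 1-\epsilon$ with a constant $C_{I,\epsilon}\sim 1/\epsilon$. To control $\sum_{|a|\ge T}$ of these with constants summable against $|\gcal|^2$, while preserving density one, one needs a decaying choice $\epsilon_a$, a sufficiently rapidly decaying $\gcal$, and an additional diagonal argument in $T$; none of this is automatic. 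The paper avoids the issue by running Chebyshev \emph{once} on the single Weyl-sum quantity
\[
N(\lambda,T,\gcal)=\sum_{\lambda_j\le\lambda}\bigl|\langle {\bf 1}_{|t|\ge T}\,\chi(\lambda_j^{-1}D)\,\gcal\,\gamma_{x,\xi}^*\phi_j,\ \gcal\,\gamma_{x,\xi}^*\phi_j\rangle\bigr|,
\]
bounded via the averaged local Weyl law for matrix elements along $\gamma_{x,\xi}$ by $\int_{B^*\R}{\bf 1}_{|t|\ge T}|\gcal|^2\chi\,dt\,d\sigma<\epsilon$; this makes the uniformity transparent and dispenses with the interval-by-interval bookkeeping. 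Aside from this technical point (and the minor matter that your smooth cutoff $\rho_T$ must be commuted past $\chi(D/\lambda)$, whereas the paper uses the sharp cutoff ${\bf 1}_{[-T,T]}$ in the outer slot), your argument, including the remark that one-sided $\chi$ handles a non-real $\gcal$, matches the paper's.
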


\begin{proof}

We consider  the test operator $\bar{\gcal} \chi(\lambda^{-1} D) \gcal$ in the real
domain, and its matrix elements 
$$\langle \chi(\lambda_j^{-1} D)  \gcal \gamma_{x, \xi}^* \phi_j,
\gcal \gamma_{x, \xi}^* \phi_j\rangle _{L^2(\R)}= 
\langle \overline{\gcal}\chi(\lambda_j^{-1} D)  \gcal \gamma_{x, \xi}^* \phi_j,
\gamma_{x, \xi}^* \phi_j\rangle\rangle _{L^2(\R)}$$
\begin{lem}\label{gcalWL}
There  exists a subsequence of eigenvalues $\{\lambda_{j_k}\}$ of density one so that,
\begin{equation} \label{QEG} \lim_{k \to \infty} \langle \chi(\lambda_{j_k}^{-1} D)  \gcal \gamma_{x, \xi}^* \phi_{j_k},
\gcal \gamma_{x, \xi}^* \phi_{j_k} \rangle = \int_{B^* \R} |\gcal(t)|^2 \chi(\sigma) d t d \sigma.  \end{equation}
\end{lem}

\begin{proof} 
 That is,  for any $T$, 
\begin{equation} \label{QEGa}  \lim_{k \to \infty} \langle {\bf 1}_{[-T,T]} \chi(\lambda_{j_k}^{-1} D)  \gcal \gamma_{x, \xi}^* \phi_{j_k},
\gcal \gamma_{x, \xi}^* \phi_{j_k} \rangle = \int_{B^* \R} \langle {\bf 1}_{[-T,T]}   |\gcal(t)|^2 \chi(\sigma) d t d \sigma.  \end{equation}

On the other hand, there exists a subsequence of density one so that the $\gcal$-weighted
mass on the complement is arbitrarily small, i.e. for all $\epsilon$ there exists $T(\epsilon)$
so that for $T \geq T(\epsilon),$
$$ \limsup_{k \to \infty} | \langle {\bf 1}_{|t| \geq T} \chi(\lambda_{j_k}^{-1} D)  \gcal \gamma_{x, \xi}^* \phi_{j_k},
\gcal \gamma_{x, \xi}^* \phi_{j_k} \rangle | \leq \epsilon. $$
To prove this, we consider the Weyl sums
$$N(\lambda, T, \gcal) = \sum_{j: \lambda_j \leq \lambda} 
|\langle {\bf 1}_{|t| \geq T} \chi(\lambda_{j_k}^{-1} D)  \gcal \gamma_{x, \xi}^* \phi_{j_k},
\gcal \gamma_{x, \xi}^* \phi_{j_k} \rangle|. $$
Assuming with no loss of generality that we use a positive quantization, this sum is bounded above by 

$$\begin{array}{lll} N(\lambda, T, \gcal) & \leq &  \sum_{j: \lambda_j \leq \lambda} 
\langle {\bf 1}_{|t|  \geq T} \left| \bar{\gcal } \chi(\lambda_{j_k}^{-1} D)  \gcal \right| \gamma_{x, \xi}^*  \phi_{j_k},
\gamma_{x, \xi}^* \phi_{j_k} \rangle \\ &&\\
& = &  
\int_{T^* \R}  {\bf 1}_{|t| \geq T}  \sigma \left(| \bar{\gcal }\left|\chi(\lambda_{j_k}^{-1} D)  \gcal\right|
 \right)
\sum_{j: \lambda_j \leq \lambda} d W_j^{x, \xi}  .  \end{array} $$
Then
$$\begin{array}{lll} \lim_{\lambda \to \infty} \frac{N(\lambda, T, \gcal) }{N(\lambda)}& \leq & 
\int_{B^* \R}  {\bf 1}_{|t| \geq T}  |\gcal|^2 \chi(\sigma) ds d \sigma < \epsilon,  \end{array} $$
if $T$ is chosen large enough so that $\int_{|t| \geq T} |\gcal|^2 ds < \epsilon.$ 
By definition, a  density one proportion terms of the series in the  numerator must be 
$< 2 \epsilon$.

\end{proof}


On the other hand we can use \eqref{FIFg} with $\tau = 0$ to get 
$$ \int_{\R}  \chi(\sigma/\lambda)  \left|\nu_{\lambda_j}^{x, \xi, \gcal}(\sigma) \right|^2 dt \simeq\int_{ |\sigma| \geq (1 - \epsilon) \lambda_j}  |\nu_{\lambda_j}^{x, \xi, \gcal}(\sigma)|^2 dt . $$

Combining the two limit formulae gives,  for any $\epsilon > 0$,
$$\liminf_{\lambda_j \to \infty} \int_{ |\sigma| \geq (1 - \epsilon) \lambda_j}  |\nu_{\lambda_j}^{x, \xi, \gcal}(\sigma)|^2 dt  \geq 2 \epsilon
||\gcal||^2_{L^2(\R)}. $$
In the complex domain on $\partial S_{\tau}$ we then use \eqref{PLANCH}  to get

$$\liminf_{\lambda_j \to \infty} \int_{ |\sigma| \geq (1 - \epsilon) \lambda_j}  |\nu_{\lambda_j}^{x, \xi, \gcal}(\sigma)|^2  e^{-2 \sigma \tau}  d \sigma \geq   2 \epsilon
||\gcal||^2_{L^2(\R)}. e^{2 |\tau| (1 - 
\epsilon) \lambda_j}, $$
concluding the proof of the Lemma.

\end{proof}

\subsection{Logarithmic asymptotics}

As in the periodic case, we can then compute logarithmic asymptotics of $L^2$ norms on $\partial S_{\tau}$:

\begin{lem} \label{L2NORMintrononper} Assume that $\{\phi_j\}$ satsifies QER along arcs of the
 geodesic $\gamma_{x, \xi}$. Let $||\gcal\gamma_{x, \xi}^{\tau*} \phi_j^{\C}||_{L^2(\partial S_{\tau})}$ be the $L^2$-norm
 along $\partial S_{\tau}$. Then for all $\gcal$ as above,
$$\lim_{\lambda_j \to \infty} \frac{1}{\lambda_j} \log ||\gcal \gamma_{x, \xi}^{\tau*} \phi_j^{\C}||^2_{L^2(\partial S_{\tau})}
= 2 |\tau| .$$
\end{lem}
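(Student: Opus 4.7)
The plan is to bracket the quantity $\frac{1}{\lambda_j}\log \|\gcal\,\gamma_{x,\xi}^{\tau*}\phi_j^{\C}\|_{L^2(\partial S_\tau)}^2$ between $2|\tau| - o(1)$ and $2|\tau| + o(1)$. The upper bound comes essentially for free from the pointwise complex Weyl bounds, while the lower bound is the packaging, via Plancherel \eqref{PLANCH}, of the mass-saturation statement Lemma \ref{L2NORMB} that has already been proved.

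For the upper bound, I would invoke Proposition \ref{PWa}, which on the slice $\partial S_\tau \subset \partial M_\tau$ gives a uniform pointwise bound of the form $|\gamma_{x,\xi}^{\tau*}\phi_j^{\C}(t+i\tau)|^2 \leq C\lambda_j^{\alpha}e^{2|\tau|\lambda_j}$ for some polynomial power $\alpha$ depending on $m$. Multiplying by $|\gcal(t+i\tau)|^2$, which is integrable along $\partial S_\tau$ by the assumption on $\gcal$, and integrating yields
\begin{equation*}
\|\gcal\,\gamma_{x,\xi}^{\tau*}\phi_j^{\C}\|_{L^2(\partial S_\tau)}^2 \;\leq\; C\lambda_j^{\alpha}\,\|\gcal\|_{L^2(\partial S_\tau)}^2\, e^{2|\tau|\lambda_j}.
\end{equation*}
Taking $\frac{1}{\lambda_j}\log$ and sending $\lambda_j\to\infty$ absorbs the polynomial prefactor and gives $\limsup \leq 2|\tau|$.

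For the lower bound, I would apply the Plancherel identity \eqref{PLANCH}, which rewrites the $L^2$ norm on $\partial S_\tau$ as the $e^{-2|\tau|\sigma}$-weighted $L^2$ norm of the Fourier transform $\nu_{\lambda_j}^{x,\xi,\gcal}$. Dropping all of the integrand except the region $|\sigma|\geq (1-\epsilon)\lambda_j$ on which the exponential weight is maximal, the second (``consequently'') conclusion of Lemma \ref{L2NORMB} gives directly
\begin{equation*}
\|\gcal\,\gamma_{x,\xi}^{\tau*}\phi_j^{\C}\|_{L^2(\partial S_\tau)}^2 \;\geq\; C_\epsilon\, e^{2|\tau|(1-\epsilon)\lambda_j}.
\end{equation*}
Taking $\frac{1}{\lambda_j}\log$ produces a lower bound of $2|\tau|(1-\epsilon)$; letting $\epsilon\to 0$ finishes.

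There is no real obstacle beyond invoking Lemma \ref{L2NORMB}: the substantive input (QER along arcs, Fourier-side mass at the edge of the classically allowed band $|\sigma|\simeq \lambda_j$) has already been extracted there, and the present lemma is simply its exponential-growth consequence for the $L^2$ norm on the tilted line $\partial S_\tau$. The one point to be mindful of is matching signs in the Plancherel weight $e^{-2|\tau|\sigma}$: one must select the half of the shell $|\sigma|\geq(1-\epsilon)\lambda_j$ on which $-\tau\sigma\geq 0$, which is exactly how the ``consequently'' inequality in Lemma \ref{L2NORMB} is set up.
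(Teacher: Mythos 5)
Your proposal is correct and follows essentially the same route as the paper: the lower bound via the Plancherel identity \eqref{PLANCH} combined with Lemma \ref{L2NORMB}, and the upper bound from the pointwise sup-norm estimate of Proposition \ref{PWa} together with the integrability of $|\gcal|^2$ on $\partial S_\tau$. The paper's version is terser but proceeds through exactly the same two steps, so there is nothing to add.
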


\begin{proof}

By Lemma \ref{L2NORMB}, we have   for any $\epsilon > 0$, and any $\gcal$ as above,
$$\begin{array}{lll} \liminf_{\lambda_j \to \infty}  \frac{1}{\lambda_j}\log ||\gcal \gamma_{x, \xi}^{* \tau} \phi_j^{\C}||^2_{L^2(\partial S_{\tau})}
& = & \liminf_{\lambda_j \to \infty}  \frac{1}{\lambda_j} \log \int_{\R} e^{ - 2 \tau \sigma} |\nu_{\lambda_j}^{x, \xi, \gcal} (\sigma) |^2 d \sigma  \\ &&\\
& \geq & 2  (1 - \epsilon) |\tau| \end{array} .$$
On the other hand, the upper bound follows (as usual) by Proposition \ref{PWa}.

\end{proof}

Let $\gcal(t) = \frac{1}{t + i p}$ where $|p| $ is sufficiently large (as above).
  Lemma \ref{L2NORMB}  implies:

\begin{cor} \label{Nj}
For any $\epsilon > 0$
and any $\lambda_j$ there exists $N_j = N(\epsilon, \lambda_j) \in \R$ so that
$$ \liminf_{\lambda_j \geq \infty} \frac{1}{ \lambda_j} \log \int_{N_j}^{N_{j} + 1} |\gamma_{x, \xi}^{* \tau} \phi_j^{\C}|^2 dt > 2  |\tau| - \epsilon. $$
The choice of unit length intervals here is arbitrary. 
\end{cor}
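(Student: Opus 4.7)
The plan is a direct pigeonhole argument. Fix $\epsilon > 0$ and set $M_n(j) := \int_n^{n+1} |\gamma_{x,\xi}^{*\tau}\phi_{\lambda_j}^{\C}(t+i\tau)|^2\, dt$ for $n \in \Z$. I want to produce, for all $j$ large enough, an integer $N_j$ with $M_{N_j}(j) > e^{(2|\tau|-\epsilon)\lambda_j}$, from which the liminf bound follows upon taking $\tfrac{1}{\lambda_j}\log$. The two inputs I would combine are (i) the global growth rate of the $\gcal$-weighted $L^2$-norm provided by Lemma \ref{L2NORMintrononper}, and (ii) the summability, over unit intervals, of the supremum of $|\gcal(t+i\tau)|^2$.

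First, with $\gcal(t) = (t+ip)^{-1}$ and $|p| > \epsilon$, the weight satisfies $|\gcal(t+i\tau)|^2 = (t^2+(p+\tau)^2)^{-1}$, so the sup weights $C_n := \sup_{t \in [n,n+1]} |\gcal(t+i\tau)|^2$ obey $C_n = O((1+n^2)^{-1})$ and hence $K := \sum_{n \in \Z} C_n < \infty$. Decomposing the global integral on $\partial S_\tau$ into unit intervals gives the elementary upper bound
$$\int_{\R} |\gcal(t+i\tau)|^2 |\gamma_{x,\xi}^{*\tau}\phi_{\lambda_j}^{\C}(t+i\tau)|^2\, dt \;\le\; \sum_{n \in \Z} C_n\, M_n(j).$$
On the other hand, Lemma \ref{L2NORMintrononper} asserts that this same global integral is at least $e^{(2|\tau|-\epsilon/2)\lambda_j}$ for all $j$ beyond some threshold $J_0(\epsilon)$. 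Now argue by contradiction: if for some such $j$ one had $M_n(j) \le e^{(2|\tau|-\epsilon)\lambda_j}$ for \emph{every} $n$, then the right-hand side of the display would be at most $K e^{(2|\tau|-\epsilon)\lambda_j}$, contradicting the lower bound as soon as $e^{(\epsilon/2)\lambda_j} > K$. Hence such an $N_j$ must exist for all sufficiently large $j$, proving the corollary.

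I do not foresee any real obstacle; the analytic content is entirely absorbed into Lemma \ref{L2NORMintrononper}, and what remains is combinatorial. The only point of care is the choice of convergence factor $\gcal$: one needs it holomorphic on $S_\epsilon$ with $|\gcal|^2$ both $L^1$ on horizontal lines (so that Plancherel applies in Lemma \ref{L2NORMB}) \emph{and} having summable sup-over-unit-intervals (so the pigeonhole is tight). The rational $\gcal(t) = (t+ip)^{-1}$ with $|p|$ large satisfies both, as would the Gaussian $\gcal(t) = e^{-t^2/2}$. The parenthetical remark that the unit interval length is arbitrary is immediate from the same proof applied to length-$L$ intervals, the only change being a new constant in place of $K$.
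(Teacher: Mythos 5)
Your proposal is correct and follows essentially the same route as the paper: decompose the $\gcal$-weighted global $L^2$ norm into unit intervals, bound $|\gcal|^2$ by $O((1+n^2)^{-1})$ on $[n,n+1]$ to make the weights summable, invoke Lemma~\ref{L2NORMintrononper} for the lower bound of order $e^{2|\tau|\lambda_j}$, and conclude by pigeonhole that some unit interval must carry mass of size $e^{(2|\tau|-\epsilon)\lambda_j}$. Your write-up is in fact slightly more careful than the paper's (the explicit $\epsilon/2$ buffer and the explicit constant $K$), but the argument is the same.
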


\begin{proof}

Using obvious upper bounds on $\gcal$ on the intervals $[n, n + 1]$ we have,
$$  \int_{- \infty}^{\infty} | 
\gcal (\gamma_{x, \xi}^* \phi_{\lambda_j}^{\C} (s + i \tau)  |^2  ds 
\leq \sum_{n \in \Z} \frac{1}{1 + n^2}  \int_{n + 1}^{n} |\gamma_{x, \xi}^{* \tau} \phi_j^{\C}|^2 dt. $$
Hence by Lemma \ref{L2NORMB} , 
$$ 2 |\tau| \leq  \liminf_{\lambda_j \geq \infty}  \frac{1}{ \lambda_j} \log  \sum_{n \in \Z} \frac{1}{1 + n^2}  \int_{n + 1}^{n} |\gamma_{x, \xi}^{* \tau} \phi_j^{\C}|^2 dt. $$ If the Lemma were false, we would have for all $n$, and sufficienty
large $\lambda_j$,
$$  \int_{n}^{n} |\gamma_{x, \xi}^{* \tau} \phi_j^{\C}|^2 dt <  e^{2 |\tau| - \epsilon} ,$$
and then $  \frac{1}{ \lambda_j} \log  \sum_{n \in \Z} \frac{1}{1 + n^2}$ of these integrals would be
$\leq 2 |\tau| - \epsilon,$ a contradiction.

\end{proof}


For any set $\{a_{I, j}, j \in I\}$ of real numbers, 
$$\max_{j \in I} a_{I, j} \leq \frac{1}{N} \log \sum_{j \in I} e^{N a_{I, j} } \leq \max_{j \in K} a_{I,j} + \frac{1}{N} \log \# I. $$
The analogous bounds for integrals when $|\gcal|^2 dt$ is  a probability measure is,
$$\begin{array}{lll} \sup_{t \in \R}   \frac{1}{ \lambda_{j_k}}
 \log \left| \gamma_{x, \xi}^* \phi_{\lambda_{j_k}}^{\C} (t + i \tau)
\right|^2  & \leq  & \frac{1}{ \lambda_{j_k}}
 \log \int_{\R} \left| \gamma_{x, \xi}^* \phi_{\lambda_{j_k}}^{\C} (t + i \tau)
\right|^2  |\gcal|^2 dt \\ \\ & &  \leq 
\sup_{t \in \R}   \frac{1}{ \lambda_{j_k}}
 \log \left| \gamma_{x, \xi}^* \phi_{\lambda_{j_k}}^{\C} (t + i \tau)
\right|^2 +  \frac{1}{ \lambda_{j_k}}
 \log \int_{\R} |\gcal|^2 dt.  \end{array}$$

\subsection{Completion of proof of Theorem \ref{MAINCORnonper}}

As in the previous cases, we need to rule out 
\eqref{SMALL}.  

By Lemma \eqref{L2NORMintrononper} and Corollary \ref{Nj} there exists  a sequence $[N_j, N_{j + 1}]$
for which the lower bound of Corollary \ref{Nj} holds.  For this sequence, \eqref{YBAD}  is false. It follows
by Lemma \ref{HARTOGSM} that $v_j \to |\tau|$. This completes the proof of  Theorem  \ref{MAINCORnonper}.

\section{\label{APPENDIX} Appendix}

In this appendix, we review the QER result of \cite{TZ} (see also \cite{DZ}). We also review the theory of Fourier integral operators with complex phase that
we use in this article.   We refer to \cite{MSj} and volume IV of \cite{Ho} for background. Since the
manifolds and metrics in this article are real analytic, the theory of almost analytic extensions is not
needed.

\subsection{\label{QER} Quantum ergodic restriction in the real domain}

In this section we review the QER theorem for hypersurfaces of
\cite{TZ}. 
There is no advantage to specializing to curves in surfaces, so we
review the result for hypersurfaces $H \subset M$.

Let $H \subset M$ be an embedded submanifold, and denote by
\begin{equation} T^*_{H} M = \{(q, \xi) \in T_q^* M, \;\; q\in H\} \end{equation}
the cotangent bundle of the ambient space along $H$.  We also
denote by  $T^* H=  \{(q, \eta) \in T_q^* H, \;\; q\in H\}$ the
cotangent bundle of $H$.  We further denote by $r_{H} : T^*_H M
\to T^* _H M$ the reflection map through $T^* H$,  i.e.  $r_H(\xi) = \xi'$ with $\xi|_{TH} = \xi' |_{TH}$
but with opposite normal components.

 We define the {\it first return time}
$T(s, \xi)$ on $S^*_H M$ by,
\begin{equation} \label{FRTIME} T(s, \xi) = \inf\{t > 0: G^t (s,
\xi) \in S^*_H M, \;\;\ (s, \xi) \in S^*_H M)\}. \end{equation} By
definition $T(s, \xi) = + \infty$ if the trajectory through $(s,
\xi)$ fails to return to $H$.  We  define the first return map on
the same domain by
\begin{equation} \label{FIRSTRETURN} \Phi: S^*_H M \to S^*_H M, \;\;\;\; \Phi(s, \xi) = G^{T(s, \xi)} (s, \xi) \end{equation}
When $G^t$ is ergodic, $\Phi$ is defined almost everywhere and is
also ergodic.


\begin{maindefin} \label{ANC}  We say that  $H$ has a positive
measure of microlocal reflection symmetry if 
$$  \mu_{L, H} \left( \bigcup_{j \not= 0}^{\infty}  \{(s, \xi) \in S^*_H M : r_H G^{T^{(j)}(s, \xi)} (s, \xi)  =
 G^{T^{(j)}(s, \xi)} r_H (s, \xi)  \}\right) > 0.  $$ 
Otherwise we say that $H$ is asymmetric with respect to the geodesic flow. 

\end{maindefin}

To state the QER result for Dirichlet data, we need some further notation. The result holds for several 
classes of pseudo-differential operators on $H$ with essentially the same proof. We state the result
first for pseudo-differential operators with classical poly-homogeneous symbols 
 $$ a(s,\sigma) \sim \sum_{k=0}^{\infty} a_{-k}(s,\sigma), \,\,(a_{-k} \; \mbox{ positive homogeneous of order} 
\; -k) $$
on $T^* H$ and then
for semi-classical pseudo-differential operators with semi-classical  symbols  $a \in S^{0,0}(T^*H \times (0,h_0]$
of the form
 $$ a_{\hbar} (s,\sigma) \sim \sum_{k=0}^{\infty} \hbar^k \;a_{-k}(s,\sigma), \,\,(a_{-k} \; \in  S_{1,0}^{-k}(T^* H)) $$ as in
\cite{HZ,TZ}.  

The restriction map $S^*_H M \to B^* H$ is singular along $S^* H$ and pushes forward Liouville measure
to a  multiple  $\gamma_{B^*H}^{-1} ds d \sigma$ of the symplectic volume density on $B^* H$. Here,
$\gamma_{B^* H} : = (1 - |\sigma|^2)^{\half}$ We note that $\gamma$ is a zeroth-order homogeneous function
on $T^*_H M$ which equals the non-homogeneous $\gamma_{B^*H}$ 
 of \cite{HZ}  on $S^*_H M$ (i.e.  $|\eta_n|^2 + |\sigma|^2 = 1$).

For homogeneous pseudo-differential operators, the QER theorem is as follows:

   \begin{theo} \label{maintheorem} Let $(M, g)$ be a compact manifold with ergodic geodesic flow, and let  $H \subset
      M$ be a hypersurface.  Let $\phi_{\lambda_j}; j=1,2,...$ denote the
       $L^{2}$-normalized eigenfunctions of $\Delta_g$.
       If $H$ has a zero measure of microlocal symmetry, then
 there exists a  density-one subset $S$ of ${\mathbb N}$ such that
  for $\lambda_0 >0$ and  $a(s,\sigma) \in S^{0}_{cl}(T^*H)$
$$ \lim_{\lambda_j \rightarrow \infty; j \in S} \langle  Op(a) 
 \phi_{\lambda_j}|_{H},\phi_{\lambda_j}|_{H} \rangle_{L^{2}(H)} = \omega(a), $$
 where 
 $$  \omega(a) = \frac{4}{ vol(S^*M) } \int_{B^{*}H}  a_0( s, \sigma )  \,  \gamma^{-1}_{B^*H}(s,\sigma)  \, ds d\sigma. $$

\end{theo}

The analogous result for semi-classical pseudo-differential operators is:

  \begin{theo} \label{sctheorem} Let $(M, g)$ be a compact manifold with ergodic geodesic flow, and let  $H \subset
      M$ be a hypersurface.  
       If $H$ has a zero measure of microlocal symmetry, then
 there exists a  density-one subset $S$ of ${\mathbb N}$ such that
  for $a \in S^{0,0}(T^*H \times [0,h_0)),$
$$ \lim_{h_j \rightarrow 0^+; j \in S} \langle Op_{h_j}(a)
 \phi_{h_j}|_{H},\phi_{h_j}|_{H} \rangle_{L^{2}(H)} = \omega(a), $$
 where 
 $$  \omega(a) = \frac{4}{ vol(S^*M) } \int_{B^{*}H}  a_0( s, \sigma )  \,  \gamma^{-1}_{B^*H}(s,\sigma)  \, ds d\sigma.$$
\end{theo}

\subsection{\label{APPENDIXFIOCX} Fourier integral distributions  with complex phase}

First,  we  review  the relevant definitions (see \cite{Ho} IV, \S 25.5 or \cite{MSj}).  A Fourier integral distribution
with complex phase on a manifold $X$ is a distribution that can locally be represented by an oscillatory integral
$$A(x) = \int_{\R^N}  e^{i \phi(x, \theta)} a(x, \theta) d \theta$$
where $a(x, \theta) \in S^m( X \times V)$ is a symbol of order $m$ in a cone $V \subset \R^N$ and where the phase $\phi$ is a positive
regular phase function, i.e. it satisfies
\begin{itemize}

\item $\Im \phi \geq 0$;

\item $d\frac{\partial \phi}{\partial \theta_1}, \dots, d \frac{\partial \phi}{\partial \theta_N}$ are
linearly independent complex vectors on $$C_{\phi \R} = \{(x, \theta) : d_{\theta} (x, \theta) = 0\}. $$

\item In the analytic setting (which is assumed in this article), $\phi$ admits an analytic continuation 
$\phi_{\C}$ to an open
cone in 
$X_{\C} \times V_{\C}$.

\end{itemize}

Define  
$$C_{\phi_{\C}} = \{(x, \theta) \in X_{\C} \times V_{\C} : \nabla_{\theta} \phi_{\C}(x, \theta) = 0\}. $$
Then $C_{\phi_{\C}}$ is a manifold near the real domain. One defines the Lagrangian submanifold
$\Lambda_{\phi_{\C}} \subset T^* X_{\C}$ as the image 
$$(x, \theta) \in C_{\phi_{\C}} \to (x, \nabla_x \phi_{\C}(x, \theta)). $$

According to Definition 4.4 of \cite{MSj}, the space $I^m(X, \Lambda)$ of Fourier integral operators of order $m$
with complex phase is the class of operators satisfying
\begin{itemize}

\item $WF(A) \subset \Lambda_{\R}$;

\item For every $\lambda_0 \in \Lambda_{\R}$ and every choise of local coordinates $x_1, \dots, x_n$
near $\pi(\lambda_0)$, $A$ is microlocally of the form $I(a, \phi)$ near $\lambda_0$ where $\phi$ is a positive
phase function generating $\Lambda$ near $\lambda_0$ and where $a \in S^{m + (n + 2 N)/4}(\R^n \times \R^N)$
has its support in a small conic neighborhood of $(x_0, \theta_0) \in C_{\phi \R}$, i.e. the point corresponding
to $\lambda_0$. 

\end{itemize}

Given a closed conic positive Lagrangian manifold $\Lambda \subset T^* \tilde{ X} - 0$, there
exists a  principal symbol map
$$ A \in I_c^n(X, \Lambda)/ I_c^{m-1}(X, \Lambda) \to \Gamma^{m + n/4}(\Lambda; \lcal), $$
and also a quantization (denoted $\pcal$ in \cite{MSj}) which inverts it. As in the real domain, given a
real analytic phase $\phi$ and its holomorphic extension $\tilde{\phi}$ parametrizing $\Lambda$, one defines
the Leray residue form $d_{\tilde{\phi}}$ on $C_{\tilde{\phi}} $  by
$$d_{\tilde{\phi}} \wedge d \frac{\partial \tilde{\phi}}{\partial \tilde{\theta_1}} \wedge \cdots \wedge d \frac{\partial \tilde{\phi}}{\partial \tilde{\theta_n}} = i^{n + N} d z_1, \dots \wedge d z_n \wedge d \tilde{\theta}_1 \wedge \cdots \wedge
d \tilde{\theta}_N. $$
If $I(\phi, A)$ is a complex oscillatory integral with positive phase, and $a_0$ is the principal term of the
amplitude $A$, then the symbol of $I(\phi, A)$ is
$$a_0 \sqrt{d_{\tilde{\phi}} }. $$

\subsection{\label{CCRPITAU} Complex canonical relation of $\Pi_{\tau}$}

The complex canonical relation
of $\Pi_{\tau}$, which lies in the complex co-tangent bundle of the (Cartesian square of the) complexification $\tMpt$ of $\partial M_{\tau}$.
The positive complex canonical relation of $\Pi_{\tau}$ is the idempotent canonical relation 
$$ C_{\tau} \subset T^*(\tMpt\times \tMpt) $$ satisfying
$C^2 = C = C^*$ given by
\begin{equation} \label{Ctau}  C_{\tau} =\{ (z, \theta  \partial_z \rho, w, \theta \dbar_w \rho): z, w \in \tMpt , 
\rho(z, w) = \tau. \}. \end{equation}
Thus, if we put
$$S_{\tau} = \{(z,w) \in \tMpt \times \tMpt: \tilde{\sqrt{\rho}} (z, w) = \tau\} $$
then  $$C_{\tau} = N^* S_{\tau}. $$

The canonical relatin $C_{\tau}$ can also be described as a flowout relation  in terms of complex characteristics of the
tangential Cauchy-Riemann operator $\dbar_b$. 
As a strongly pseudo-convex hypersurface in the complex manifold $M_{\C}$,
$\partial M_{\tau}$ is a CR manifold whose complexified tangent bundle has a complex codimension one
subspace  invariant under the complex structure $J$. 
We denote by $Z_1, \dots, Z_n$, resp.
$\bar{Z}_1,
\dots, \bar{Z}_m$ an orthonormal basis with respect to the \kahler form $\omega_{\rho}$ on $M_{\C}$
of the holomorphic tangent space $T^{1,0} \partial M_{\tau}$,  resp. the anti-holomorphic tangent
space $T^{0,1}(\partial M_{\tau})$. Then $\Box_b = \sum_j \bar{Z}_j^* \bar{Z}_j$.

We denote the symbol of $\Box_b$ by  $q$.
Its zero set is the characteristic variety $\Sigma_{\tau}$ of $\Box_b$ in the real cotangent space $T^* \partial M_{\tau}$, i.e. simultaneous
kernel of the functions
\begin{equation} \zeta_j(x, \xi) =
\langle \xi, \bar{Z}_j \rangle, \end{equation} 
which are the symbols of the associated derivative along $\bar{Z}_j$. Thus,
$$q = \sum_{j = 1}^d |\zeta_j|^2 : T^* \partial M_{\tau} \to \R. $$  When we holomorphically extend to
$\tMpt$, we get the complex characteristic variety $\jcal_+ \subset T^* (\tMpt)$, the zero set
of $\tilde{q}$, the holomorphic extension of $q$.
We let $\tilde{\zeta}_j$ be the analytic extensions to $\tMpt$  of the functions
$\zeta_j$ and $\tilde{\sigma}$ be the standard  holomorphic symplectic form of
$T^* \tMpt$.  Thus,
\begin{equation} \mathcal{ J}_+ = \{(\tilde{x}, \tilde{\xi}) \in T^*
\tMpt: \tilde{\zeta}_j = 0\  \forall j\}\; = \{\tilde{q} = 0\}.  \end{equation}
It is an  involutive sub-manifold of $T^*\tMpt$ with the properties:
\begin{equation}\label{JCAL}  \begin{array}{rl} \mbox{(i)} & (\jcal _+)_{\R} = \Sigma \\ &  \\\mbox{(ii)} &
\frac{1}{i} \tilde{\sigma} (u, \bar{u}) > 0, \forall u \in T(\jcal _+)^{\bot}\\ &
\\\mbox{(iii)} &
T_{\rho}(\jcal _+) = T_{\rho} \tilde{\Sigma} \oplus W_{\rho}^+.
\end{array} \end{equation}
Here, $W_{\rho}^+$ is the sum of the eigenspaces of $F_{\rho}$,
the normal Hessian
of $q$,  corresponding to the eigenvalues
$\{ i \lambda_j\}$ with $\lambda_j \geq 0$.  Thus, $\jcal_+$ is the stable manifold for the Hamiltonian flow of $\tilde{q}$ on 
$T^* \tMpt$. 

Since $\jcal_+$ is a co-isotropic (i.e. involutive)  submanifold of $T^* \tMpt$, it has
a null folation, which   is given by
the joint Hamilton flow of the defining functions $\tilde{\zeta}_j$. 
We then define 
\begin{equation} \label{CTAUDEF} C_{\tau}: = \{(\tilde{x}, \tilde{\xi}, \tilde{y}, \tilde{\eta}) \in
\jcal _+ \times \overline{\jcal _+}: (\tilde{x}, \tilde{\xi}) \sim
(\tilde{y}, \tilde{\eta})\}, \end{equation}
where $\sim$ is the equivalence relation of `belonging to the same leaf of
the null foliation of $\jcal _+.$

This equivalence relation may be described in terms of Hamilton flows.   One has a fibration
$$\pi_+: \jcal_+ \to \Sigma$$
whose fiber at $\sigma$ is
 the orbit  of $\sigma$ under   the joint Hamilton
flow of the $\tilde{\zeta}_j$'s.  Then
$$C = \jcal_+ \times_{\pi_+}  \overline{\jcal_+}. $$
Equivalently, $C$ is the flow-out of $\jcal_+ \oplus \overline{\jcal_+}$ from $\Delta_{\Sigma \times \Sigma}$.

It is clear from the description that $C
\circ C = C^* = C,$ i.e. that $C$
is an idempotent canonical relation.
The following proposition,  proved in \cite{MSj} and in
(\cite{BoGu}), Appendix, Lemma 4.5).

\begin{prop}  \label{UNIQUE} $C_{\tau}$ is the unique strictly positive almost analytic
canonical relation
$C$
satisfying
$$ \diag(\Sigma) \subset C \subset \jcal _+ \times
\overline{\jcal _+}.$$  \label{C}\end{prop}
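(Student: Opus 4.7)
The plan is to establish existence of $C_\tau$ by direct inspection and then prove uniqueness by combining the coisotropic structure of $\jcal_+ \times \overline{\jcal_+}$ with the positivity hypothesis at points of the real locus $\diag(\Sigma)$.

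For existence, I would verify each of the required properties of $C_\tau$ as defined in \eqref{CTAUDEF}. The inclusion $C_\tau \subset \jcal_+ \times \overline{\jcal_+}$ is immediate from the definition, and $\diag(\Sigma) \subset C_\tau$ follows because each point of $\Sigma$ lies on its own null leaf of $\jcal_+$ (so it is $\sim$-equivalent to itself). Strict positivity of $C_\tau$ along $\diag(\Sigma)$ reduces, via the splitting (iii) of \eqref{JCAL}, to positivity of $\frac{1}{i}\tilde{\sigma}$ on $W_\rho^+$ and of $-\frac{1}{i}\overline{\tilde{\sigma}}$ on $W_\rho^- = \overline{W_\rho^+}$, which is exactly property (ii). A dimension count (flowing the real locus along the complex null foliation in each factor) confirms that $C_\tau$ has the correct Lagrangian dimension in $T^*\tMpt \times T^*\tMpt$ equipped with the difference of the two holomorphic symplectic forms.

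For uniqueness, suppose $C$ is any strictly positive almost analytic canonical relation with $\diag(\Sigma) \subset C \subset \jcal_+ \times \overline{\jcal_+}$. The product $\jcal_+ \times \overline{\jcal_+}$ is coisotropic, with null foliation generated by the Hamilton vector fields of $\tilde{\zeta}_1,\ldots,\tilde{\zeta}_{m-1}$ on the first factor and of their complex conjugates on the second. A standard fact from symplectic geometry (see \cite{MSj}) asserts that any Lagrangian contained in a coisotropic submanifold is foliated by leaves of its null foliation. Applying this to $C$ and invoking $\diag(\Sigma) \subset C$, we conclude that $C$ must contain the full flowout of $\diag(\Sigma)$ under these commuting null vector fields, and by inspection of the equivalence relation used in \eqref{CTAUDEF} this flowout is precisely $C_\tau$. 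Equality of (complex) dimension then forces $C = C_\tau$ in a neighborhood of $\diag(\Sigma)$, after which almost-analytic continuation propagates the identity to a full germ.

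The main obstacle is verifying that strict positivity of $C$ rigidly selects the subspace $W_\rho^+$ (the stable manifold for $F_\rho$) rather than some other Lagrangian complement of $T\tilde{\Sigma}$ available inside the normal Hessian geometry. At each $p \in \diag(\Sigma)$, the tangent space $T_p C$ must be a maximal positive Lagrangian for the difference symplectic form that contains $T_p\diag(\Sigma)$, and the linear-algebraic uniqueness of such a positive Lagrangian extension -- given properties (ii)--(iii) of \eqref{JCAL} -- is the heart of the proposition. This pointwise rigidity is precisely the content of \cite{BoGu}, Appendix, Lemma 4.5; once it is in place, the almost-analytic extension theory of \cite{MSj} upgrades the pointwise equality $T_p C = T_p C_\tau$ along $\diag(\Sigma)$ to the global equality $C = C_\tau$.
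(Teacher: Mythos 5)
The paper gives no proof of this proposition---it is credited outright to \cite{MSj} and to \cite{BoGu}, Appendix, Lemma~4.5---so what you have written is a reconstruction of the cited argument rather than a comparison with a proof in the text. Your final paragraph does identify the actual mechanism used in those references: at each $p \in \diag(\Sigma)$, the tangent space $T_p C$ is a strictly positive Lagrangian containing the isotropic $T_p\diag(\Sigma)$ and contained in $T_p(\jcal_+ \times \overline{\jcal_+})$, and the linear algebra of positive Lagrangian planes (the content of \cite{BoGu}, Appendix, Lemma~4.5) shows there is exactly one such plane, namely $T_p C_\tau$; the germ statement then follows by almost analytic extension. The earlier ``flowout'' paragraph, however, overstates what it establishes. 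The set you obtain by flowing the real $2m$-dimensional manifold $\diag(\Sigma)$ along the $2(m-1)$ complex null directions of $\jcal_+ \times \overline{\jcal_+}$ has real dimension $6m-4$, whereas $C_\tau$, as a complex Lagrangian in $T^*(\tMpt \times \tMpt)$, has real dimension $8m-4$; so the flowout is not ``precisely $C_\tau$'' but only a real slice of it, and the missing $2m$ real directions are exactly the $J$-span of $T\diag(\Sigma)$ that must be supplied by almost analytic continuation. Showing that $C$ contains that continuation---equivalently, determining $T_p C$ in those $J$-directions---is again the pointwise positive-Lagrangian rigidity. So the rigidity step you flag as ``the main obstacle'' is not a subsidiary verification sitting alongside the flowout argument; it \emph{is} the proof, and the flowout picture should be presented as a geometric description of $C_\tau$ rather than as an independent route to uniqueness.
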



\end{document}